
\documentclass[leqno,12pt]{article}

\usepackage{epsfig}

\usepackage{amsmath}
\usepackage{amscd}
\usepackage{amsopn}
\usepackage{amsthm}
\usepackage{amsfonts,amssymb}
\usepackage{bbm}
\usepackage{latexsym}

\usepackage{color}

 \setlength{\textheight}{24cm}
\setlength{\topmargin}{-1cm}
\setlength{\textwidth}{15cm}
 \setlength{\hoffset}{-5.5mm}
 \setlength{\voffset}{-7mm}

\makeatletter
\@addtoreset{equation}{section}
\setcounter{secnumdepth}{3}

\makeatother

\newtheorem{lemma}{LEMMA}[section]
\newtheorem{proposition}[lemma]{PROPOSITION}
\newtheorem{corollary}[lemma]{COROLLARY}
\newtheorem{theorem}[lemma]{THEOREM}

\newtheorem{remarks}[lemma]{REMARKS}

\newtheorem{definitions}[lemma]{DEFINITIONS}


\newcommand{\real} {\mathbbm{R}}

\newcommand{\nat}{\mathbbm{N}}



\renewcommand{\a}{\alpha}
\renewcommand{\b}{\beta}

\newcommand{\g}{\gamma}

\newcommand{\vp}{\varphi}
\newcommand{\ve}{\varepsilon}


\newcommand{\realn}{{\real^n}}

\newcommand{\on}{\quad\text{ on }}
\newcommand{\und}{\quad\mbox{ and }\quad}
\newcommand{\inv}{^{-1}}
\newcommand{\ov}{\overline}

  
\newcommand{\W}{\mathcal W}  
  
\newcommand{\C}{\mathcal C}  

\newcommand{\F}{\mathcal F}

\renewcommand{\H}{{\mathcal H}}
\newcommand{\B}{\mathcal B}

\newcommand{\M}{\mathcal M}
\newcommand{\A}{\mathcal A}



\newcommand{\itemframe}%
{\setlength{\parskip}{10pt}\begin{enumerate} \setlength{\topsep}{10pt}%
\setlength{\itemsep}{15pt}\setlength{\parsep}{5pt}}



\newcommand{\diam}{\operatorname*{diam}}

\newcommand{\intoi}{\int_0^\infty}

\newcommand{\sumj}{\sum_{j=1}^\infty}
\newcommand{\cupj}{\bigcup_{j=1}^\infty}


\newcommand{\zb}{Z_\b}

\title{Semipolar sets and intrinsic Hausdorff measure}
\date{}
 
\author{Wolfhard Hansen and Ivan Netuka}

\begin{document}
\maketitle 

\begin{abstract}
Given a ``Green function'' $G$ on a locally compact space $X$ with countable
base, a Borel set $A$ in $X$ is called \emph{$G$-semipolar}, if there is no measure $\nu\ne 0$ supported
by $A$  such that $G\nu:=\int G(\cdot,y)\,d\nu(y)$ is a continuous  real function on $X$.
Introducing an \emph{intrinsic Hausdorff measure $m_G$}  using $G$-balls 
$B(x,\rho):=\{y\in X\colon G(x,y)>1/\rho\}$, it is shown that 
every set $A$ in $X$ with $m_G(A)<\infty$ is contained in a~$G$-semipolar
Borel set. 

This is of interest, since $G$-semipolar sets are semipolar in the potential-theoretic
sense (countable unions of totally thin sets, hit by a corresponding process at most countably many 
times) provided $G$ is really a Green function  for a~harmonic space or, more generally,   a~balayage space.

For classical potential theory  
and Riesz potentials on $\mathbbm R^n$ or, more generally, for   Green functions 
on a metric measure space $(X,d,\mu)$ (where balls are relatively compact) given by a~continuous heat kernel 
$(x,y,t)\mapsto p_t(x,y)$ with upper and lower bounds
 of the form~$t^{-\alpha/\beta}\Phi_j(d(x,y)t^{-1/\beta})$, $j=1,2$, the intrinsic Hausdorff measure is equivalent 
to an ordinary Hausdorff measure~$m_{\alpha-\beta}$.

It is shown that for the corresponding space-time situation on $X\times \mathbbm R$ (heat equation 
on $\mathbbm R^n \times \real$ in the classical case of the Gauss-Weierstrass kernel) the intrinsic Hausdorff
measure is equivalent to an anisotropic Hausdorff measure~$m_{\alpha,\beta}$ (with $\alpha=n$ and $\beta=2$
 for the heat equation).

In particular, our result solves an open problem for the heat equation (which was the initial motivation
for the paper).

Keywords: Heat equation; metric measure space; 
heat kernel; balayage space; 
Green function; 
Hausdorff measure; 
semipolar set;
space-time process

MSC 2010: 28A78; 31C15; 31E05; 31C12; 31D05; 35J08; 35K08; 60J45; 60J60; 60J75

\end{abstract}

\section{Introduction}\label{intro}
In 1985, S.J.\,Taylor and N.A.\,Watson published a paper on 
a Hausdorff measure classification of polar sets for the heat equation
\begin{equation*} 
\sum_{i=1}^n \frac{\partial^2 u}{\partial x_i^2} - \frac {\partial u}{\partial t}=0
\end{equation*} 
on $\real^{n+1}$; see \cite{TW}. To that end they introduced an 
anisotropic (parabolic) measure~$m_P$ of Hausdorff type  (their notation is 
$\mathcal P-\Lambda^n-m$) defined as follows. For $\rho>0$,
$x' \in \real^{n+1}$, $A\subset \real^{n+1}$ and $\delta>0$,  let
\begin{equation}\label{def-P}
 P(0,\rho) :=\bigl [-\rho/2,\rho/2\bigl]^n\times \bigl[-\rho^2/2,\rho^2/2\bigr], \qquad P(x',\rho):=x'+P(0,\rho),
\end{equation} 
\begin{equation*} 
 m_P^\delta(A):=\inf\biggl\{\sumj \bigl(\diam P(x_j',\rho_j)\bigr)^n\colon
                    A\subset \cupj P(x_j',\rho_j), \, \diam P(x_j',\rho_j)<\delta\biggr\},
\end{equation*} 
\begin{equation*} 
  m_P(A):=\lim_{\delta\to 0}   m_P^\delta(A).
\end{equation*} 
They showed that $A$ is polar if $m_P(A)=0$ (\cite[Theorem 1]{TW}), 
and noted that 
  Borel sets $A\subset \realn\times \{0\}$ of strictly positive finite 
$n$-dimensional Lebesgue measure are nonpolar, but satisfy $m_P(A)<\infty$. 
Since such sets are semipolar, they raised the problem, if \emph{every} set $A$ in
$\real^{n+1}$ satisfying $m_P(A)<\infty$ is semipolar.

In this paper, we shall give an affirmative answer to this question. In fact,
we shall prove such a result in the abstract setting of locally compact space $X$ with countable base,
 where we consider function kernels~$G$ having simple regularity properties
which are satisfied by Green functions for a wide class of elliptic or parabolic second order partial differential 
operators (leading to harmonic spaces; see, for example, \cite[Section 7]{GH1}) as well as for 
rather general jump processes (leading to balayage spaces; see \cite{BH}).  
Special attention is given to an application on metric measure spaces; see Section \ref{section-metric}. 

The reader who is not familiar with or interested in this generality may suppose that
we mainly consider the case of the heat equation, classical potential theory, and Riesz potentials.

The clue will be the introduction of an intrinsic
measure $m_G$ of Hausdorff type using  \emph{$G$-balls}
\begin{equation*}
                              B(x,\rho):=\{y\in X\colon G(x,y)>\rho\inv\}, \qquad x\in X,\,\rho>0,
\end{equation*} 
and defining, for sets $A$ in $X$ and  $\delta>0$,
\begin{equation*} 
         m_G^\delta (A):=\inf
         \biggl\{ \sumj \rho_j\colon A\subset \cupj B(x_j,\rho_j),\, 0<\rho_j<\delta\bigg\}, \qquad
         m_G(A):=\lim_{\delta\to 0} m_G^\delta(A).
\end{equation*} 
A Borel set $A$ in $X$ will be called \emph{$G$-semipolar} if there is no measure $\mu\ne 0$
on $X$ such that $\mu(X\setminus A)=0$ and $G\mu:=\int G(\cdot,y)\,d\mu(y)$ is a~continuous 
real function.

Under simple assumptions on the regularity of $G$ we shall prove the following;
  see  properties (i) -- (iii) in Section \ref{G-section} 
and Theorem \ref{main-result}.

\begin{theorem}\label{intro-theorem}
If a set $A$ in $X$ satisfies  $m_G(A)<\infty$, then it is contained in a~$G$-semipolar Borel set.
\end{theorem}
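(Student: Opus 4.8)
The plan is to manufacture, for a set $A$ with $m_G(A)<\infty$, a Borel superset carrying no nonzero measure with continuous potential. First I would pass from $A$ to a Borel set. Since the $G$-balls $B(x,\rho)=\{y\colon G(x,y)>\rho\inv\}$ are open (the kernel being lower semicontinuous), $m_G$ is a \emph{metric} outer measure obtained as $\lim_{\delta\to0}m_G^\delta$, hence Borel regular; thus there is a Borel set $B\supseteq A$ with $m_G(B)=m_G(A)<\infty$. It then suffices to show that every Borel set $B$ with $m_G(B)<\infty$ is $G$-semipolar, and I argue by contradiction: suppose $\mu\ne0$ with $\mu(X\sms B)=0$ and $G\mu$ continuous and real.

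\emph{Reduction to compact support.} By inner regularity choose a compact $K\subset B$ with $\mu(K)>0$ and put $\nu:=\mu|_K$. Writing $G\mu=G\nu+G(\mu|_{X\sms K})$, the second summand is lower semicontinuous (a potential of a positive measure) and finite (as $G\mu$ is real), so $G\nu=G\mu-G(\mu|_{X\sms K})$ is upper semicontinuous; being also lower semicontinuous, $G\nu$ is continuous. Hence I may assume $\nu\ne0$ has compact support and $G\nu$ is a finite continuous potential.

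\emph{The analytic core.} The decisive quantity is the truncated local potential
\[
 \eta(\rho):=\sup_{x\in X}\int_{B(x,\rho)}G(x,y)\,d\nu(y),
\]
and I would establish the continuity principle $\eta(\rho)\to0$ as $\rho\to0$. This is exactly where continuity of $G\nu$, and not merely its finiteness, is used: for fixed $x$ one has $\int_{B(x,\rho)}G(x,y)\,d\nu(y)\downarrow\int_{\{G(x,\cdot)=\infty\}}G(x,y)\,d\nu(y)=0$ by dominated convergence, the limit vanishing because $G\nu(x)<\infty$ forces $\nu(\{G(x,\cdot)=\infty\})=0$; the real task is to make this vanishing \emph{uniform} in $x$. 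Continuity of $G\nu$, together with the regularity properties (i)--(iii) of $G$ and the compactness of $\supp\nu$ (which confines the relevant centres to a compact $G$-neighbourhood of $K$), should yield the needed equicontinuity and hence $\eta(\rho)\to0$. I expect this uniform vanishing to be the main obstacle. By contrast, for a merely bounded potential $\eta$ need not tend to $0$ (e.g. surface measure on a time-slice for the heat equation, whose potential is bounded but jumps across the slice), which is precisely the polar/semipolar distinction.

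\emph{Conclusion by covering.} Given $\ve>0$ choose $\rho$ with $\eta(\rho)\,(m_G(K)+1)<\ve$, and a cover $K\subset\cupj B(x_j,\rho_j)$ with $\rho_j<\rho$ and $\sumj\rho_j\le m_G(K)+1$. On $B(x_j,\rho_j)$ one has $G(x_j,\cdot)>\rho_j\inv$, whence
\[
 \rho_j\inv\,\nu(B(x_j,\rho_j))\le\int_{B(x_j,\rho_j)}G(x_j,y)\,d\nu(y)\le\eta(\rho_j)\le\eta(\rho).
\]
Summing, $\nu(K)\le\sumj\nu(B(x_j,\rho_j))\le\eta(\rho)\sumj\rho_j\le\eta(\rho)\,(m_G(K)+1)<\ve$, and letting $\ve\to0$ gives $\nu(K)=0$, contradicting $\mu(K)>0$. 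Note that the centred estimate bounds the mass of each covering ball directly by the uniform $\eta$, so no Vitali-type recentring is required — an important simplification in the asymmetric setting where $G(x,y)\ne G(y,x)$ and the $G$-balls need not be symmetric.
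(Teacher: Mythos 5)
Your overall strategy coincides with the paper's: reduce to a compactly supported $\nu$ with continuous real potential, show that the truncated potentials $x\mapsto\int_{B(x,\rho)}G(x,y)\,d\nu(y)$ tend to $0$ \emph{uniformly} as $\rho\to0$, and then run the covering estimate $\nu(B(x_j,\rho_j))\le\rho_j\,\eta(\rho)$ to kill $\nu(K)$. (The paper phrases the same contradiction quantitatively, showing $\mu(B(x,\rho))<\rho/k$ for all $k$ and hence $m_G(K)=\infty$, but the two formulations are equivalent.) Your reduction to compact support and your final covering computation are correct. The genuine gap is that the step you yourself flag as ``the main obstacle'' --- the uniform vanishing $\eta(\rho)\to0$ --- is exactly where the substance of the theorem lies, and you do not prove it: you give only the pointwise statement (dominated convergence plus $G\nu(x)<\infty$) and assert that continuity of $G\nu$ ``should yield the needed equicontinuity.'' In the paper this is the bulk of the argument: one first proves that for each fixed $\rho$ the truncated potential $p_\rho$ is \emph{continuous} in $x$ --- which requires replacing the sharp indicator of $B(x,\rho)$ by a smoothed cutoff $\vp(1/(\rho G(x,\cdot)))$, localizing with a function $\psi$ that removes the small mass near the point of continuity, and invoking a generalized dominated convergence lemma driven by $G\mu(x_n)\to G\mu(x_0)<\infty$ --- and only then does Dini's theorem upgrade the pointwise monotone convergence $p_\rho\downarrow0$ to uniform convergence on a compact neighbourhood $L$ of $K$. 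Without that continuity argument your $\eta(\rho)\to0$ is unsupported, so the proof is incomplete at its decisive point.

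Two smaller issues. First, your Borel-regularity step is misjustified: the hypotheses make only $G(\cdot,y)$ lower semicontinuous in the \emph{first} variable, so the balls $B(x,\rho)=\{y\colon G(x,y)>\rho^{-1}\}$ are merely Borel, not open, and $X$ carries no distinguished metric for which $m_G$ would be a metric outer measure. The correct (and easier) route is the paper's: for each $k$ take a near-optimal cover of $A$ by balls of radius less than $1/k$ and intersect the resulting unions over $k$, producing a Borel set $\tilde A\supset A$ with $m_G(\tilde A)=m_G(A)$. Second, taking the supremum in $\eta(\rho)$ over all of $X$ is harmless only because of hypothesis (iii) (for small $\rho$, balls centred outside a compact $L$ miss the support of $\nu$); in any case the Dini argument forces you to restrict the centres to such a compact set, as the paper does.
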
 

In fairly general potential-theoretic settings, semipolar sets (countable unions of totally sets,
sets which a corresponding process hits (almost surely) 
 at most countably many times) can be characterized
by not supporting continuous real potentials. Therefore, assuming that $G$ is really a Green
function, we have the following consequence; see properties (i$'$)-- (iv) in Section \ref{habasp} 
and Corollary \ref{m-semipolar}.

\begin{corollary}\label{intro-corollary}
If $A$ is a set in $X$ such that $m_G(A)<\infty$, then $A$ is semipolar.
\end{corollary}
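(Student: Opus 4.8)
The plan is to obtain the corollary as an immediate consequence of Theorem~\ref{intro-theorem}, once $G$-semipolarity has been matched with semipolarity in the potential-theoretic sense. Since $m_G(A)<\infty$, Theorem~\ref{intro-theorem} provides a $G$-semipolar Borel set $B$ with $A\subset B$. It therefore suffices to verify two things: that $B$, being $G$-semipolar, is semipolar, and that subsets of semipolar sets are again semipolar; the latter is standard, since semipolar sets are countable unions of totally thin sets and total thinness passes to subsets. The entire content of the corollary thus reduces to identifying the two notions under the standing hypothesis that $G$ is a genuine Green function.

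The crux is accordingly the equivalence ``$B$ is $G$-semipolar $\Longleftrightarrow$ $B$ is semipolar,'' valid because $G$ is a Green function for a harmonic space or, more generally, a balayage space. In that framework the functions $G\mu$ are genuine potentials of the balayage space, and one has the classical characterization of semipolarity that properties (i$'$)--(iv) of Section~\ref{habasp} are designed to record: a Borel set is \emph{not} semipolar if and only if it carries a nonzero measure $\mu$ whose potential $G\mu$ is a continuous real function. Comparing this with the definition of $G$-semipolarity---no nonzero measure $\mu$ with $\mu(X\setminus B)=0$ and $G\mu$ continuous real---we see that ``$B$ is $G$-semipolar'' is verbatim the statement ``$B$ is semipolar.'' Combining this identification with the inclusion $A\subset B$ yields that $A$ is contained in a semipolar set, i.e.\ $A$ is semipolar.

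I expect the real work to lie entirely in the potential-theoretic characterization rather than in the assembly above, which is purely formal. The delicate direction is the converse one: to show that a non-semipolar set genuinely supports a nonzero measure with a continuous real potential, which is where the structure of a balayage space---existence of sufficiently many continuous potentials, regularity of $G$, and the reduction/balayage apparatus---must be brought in. The easy direction, that a totally thin (hence semipolar) set can carry no nonzero measure with a continuous real, and therefore locally bounded, potential, follows from the defining thinness property together with the continuity of $G\mu$. Once this dictionary between $G$-semipolar and semipolar is in place, no further estimates are needed, and the corollary follows at once from Theorem~\ref{intro-theorem} and the hereditary property of semipolarity.
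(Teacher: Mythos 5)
Your proposal is correct and follows essentially the same route as the paper: Theorem~\ref{main-result} produces a $G$-semipolar Borel superset, Proposition~\ref{G-semipolar-semipolar} (which the paper obtains from property (iv) together with the cited characterization of semipolar sets in balayage spaces) identifies $G$-semipolarity with semipolarity for Borel sets, and the hereditary property of semipolarity finishes the argument. You also correctly locate the real work in the converse direction of that identification, which the paper likewise delegates to the reference rather than reproving.
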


Whereas the proof of Theorem \ref{intro-theorem} is rather involved and the characterization 
of semipolar sets is very subtle, there are many situations,
where it is easy to see that  every set $A$ in $X$ satisfying $m_G(A)=0$ is  contained in a ($G$-)polar Borel set;
see Corollaries \ref{m-polar} and \ref{m0polar}.

In classical potential theory on $\real^n$, every semipolar set is polar.
In the case \hbox{$n\ge 3$},
Theorem \ref{intro-theorem} reduces to the well known fact
that sets  $A$ in $\realn$ having finite $(n-2)$-dimensional Hausdorff measure
are polar: It suffices to take 
\begin{equation*} 
G(x,y):=|x-y|^{2-n}, \qquad x,y\in \realn,
\end{equation*} 
 and to observe that
$G$-balls $B(x,\rho)$ are  Euclidean balls with center $x$ and radius~$\rho^{n-2}$.
In the case $n=2$, Theorem \ref{intro-theorem} implies that sets $A$ in $\real^2$
having finite $\phi$-Hausdorff measure, where $\phi(t):=\log^+ t\inv$, are polar;
cf., for example,  \cite[Theorem~5.9.4]{gardiner-armitage}.

Taking, for the heat equation, 
\begin{equation}\label{def-W}
G(x',y'):=G_0(x'-y'), \qquad x',y'\in\real^{n+1} ,
\end{equation} 
where
\begin{equation}\label{G-heat}
                 G_0(x'):=1_{(0,\infty)}(t)\cdot t^{-n/2} \exp
\biggl(-\frac{ |x|^2}{4t}\biggr),
\qquad   x'=(x,t) \in\real^n\times \real,
\end{equation} 
we shall prove (Corollary \ref{PGP}) that, 
\begin{equation}\label{P-G-equiv}
  (2n)^{-n}  m_P \le m_G \le (2/n)^{n/2} m_P . 
\end{equation} 
So Corollaries \ref{intro-corollary} and \ref{m0polar} 
show that sets $A$ in $\real^{n+1}$ are semipolar, polar respectively
if $m_P(A)<\infty$, $m_P(A)=0$ respectively. 
 
In the  space-time setting given by general heat kernels 
$(x,y,t)\mapsto p_t(x,y)$ 
on metric measure spaces $(X,d,\mu)$ 
the intrinsic Hausdorff measure  will be equivalent 
to an anisotropic Hausdorff measure $m_{\a,\b}$ on $X':=X\times \real$;
see Section \ref{section-metric}.

In detail, our paper is organized as follows. We start with a short Section~\ref{sec-comp}
on   comparison of measures of Hausdorff type on arbitrary sets. In Section~\ref{G-section},
we prove Theorem~\ref{main-result} and show that sets $A$ with $m_G(A)=0$ are contained
in $G$-polar Borel sets.
 In Section~\ref{habasp}, we apply our general results to harmonic spaces and  balayage spaces
(diffusions and jump processes). In Section \ref{standard}, we briefly discuss the  results
in the standard settings of  classical potential theory, heat equation and Riesz potentials. 
In Section~\ref{space-time},
we prove results for  rather general space-time settings. In Section~\ref{section-metric},  
we finally apply our results to heat kernels on metric measure spaces covering diffusions and
jump processes on manifolds and fractals.

\section{Comparison of measures of Hausdorff type}\label{sec-comp}

To prove (\ref{P-G-equiv}) and similar estimates it will be helpful to
have a general comparison result for measures of Hausdorff type.
To that end let us consider an arbitrary set~$X$ and let $\F$ denote the set of
all mappings~$F$ which associate to all $x\in X$ and $\rho>0$ some
subset $F(x,\rho)$ of $X$. For all $F\in \F$, $\eta>0$ and $A\subset X$, 
we define
\begin{equation*} 
m_{\eta,F}^\delta (A):=\inf
         \biggl\{ \sumj \rho_j^\eta \colon A\subset \cupj F(x_j,\rho_j),\,
        x_j\in X, \, 0< \rho_j<\delta\bigg\}, \qquad \delta>0,
\end{equation*}
and 
\begin{equation*} 
         m_{\eta,F}(A):=\lim_{\delta\to 0} m_{\eta,F}^\delta(A).
\end{equation*} 

\begin{proposition}\label{general-comparison}
Let $\eta,\tilde \eta\in (0,\infty)$, $F,\tilde F\in \F$ and $\kappa>0$
such that, for all $x\in X$ and $\rho>0$, there exists $z\in X$
with $       F(x,\rho)\subset \tilde F(z,\kappa  \rho^{\eta/\tilde
  \eta})$. Then 
\begin{equation*} 
m_{\tilde \eta,\tilde F}\le \kappa^{\tilde\eta}m_{\eta,F}.
\end{equation*} 
\end{proposition}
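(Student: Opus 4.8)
The plan is to transport covers gauge for gauge: any covering of $A$ by sets $F(x_j,\rho_j)$ is converted into a covering by sets $\tilde F(\cdot,\cdot)$ of exactly comparable total cost, after which one takes infima and passes to the limit in the mesh parameter. I may assume $m_{\eta,F}(A)<\infty$, since otherwise the asserted inequality is trivial.

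First I would fix $\delta>0$ and an arbitrary admissible cover $A\subset\cupj F(x_j,\rho_j)$ with $0<\rho_j<\delta$. For each $j$ the hypothesis furnishes a point $z_j\in X$ with $F(x_j,\rho_j)\subset\tilde F\bigl(z_j,\kappa\rho_j^{\eta/\tilde\eta}\bigr)$. Setting $\tilde\rho_j:=\kappa\rho_j^{\eta/\tilde\eta}$, the sets $\tilde F(z_j,\tilde\rho_j)$ then cover $A$ as well, and each satisfies $0<\tilde\rho_j<\kappa\delta^{\eta/\tilde\eta}=:\tilde\delta$. The key arithmetic is the exponent identity
\[
\tilde\rho_j^{\tilde\eta}=\bigl(\kappa\rho_j^{\eta/\tilde\eta}\bigr)^{\tilde\eta}=\kappa^{\tilde\eta}\rho_j^{\eta},
\]
so that $\sumj\tilde\rho_j^{\tilde\eta}=\kappa^{\tilde\eta}\sumj\rho_j^\eta$. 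Since $\{\tilde F(z_j,\tilde\rho_j)\}$ is an admissible cover for the gauge $\tilde\delta$, taking the infimum over all admissible covers of the original type yields
\[
m_{\tilde\eta,\tilde F}^{\tilde\delta}(A)\le\kappa^{\tilde\eta}\,m_{\eta,F}^\delta(A).
\]

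It remains to pass to the limit. Because $m_{\eta,F}^\delta(A)$ increases to $m_{\eta,F}(A)$ as $\delta\to0$, the right-hand side is bounded above by $\kappa^{\tilde\eta}m_{\eta,F}(A)$ for every $\delta$; hence $m_{\tilde\eta,\tilde F}^{\kappa\delta^{\eta/\tilde\eta}}(A)\le\kappa^{\tilde\eta}m_{\eta,F}(A)$ for all $\delta>0$. As $\delta\to0$ the threshold $\kappa\delta^{\eta/\tilde\eta}$ also tends to $0$, so the left-hand side increases to $m_{\tilde\eta,\tilde F}(A)$, giving $m_{\tilde\eta,\tilde F}(A)\le\kappa^{\tilde\eta}m_{\eta,F}(A)$. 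Since $A\subset X$ was arbitrary, this is the claim. I do not expect a genuine obstacle: the content is entirely the one-line inclusion hypothesis together with the exponent identity. The only points requiring care are the bookkeeping of the two mesh parameters — here the positivity $\eta,\tilde\eta>0$ is what guarantees $\tilde\delta=\kappa\delta^{\eta/\tilde\eta}\to0$ as $\delta\to0$ — and the standard monotonicity of the premeasures $m^\delta$ in $\delta$, which legitimizes interchanging the infimum over covers with the limit in $\delta$.
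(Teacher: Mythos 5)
Your proof is correct and follows essentially the same route as the paper: transport each cover via the inclusion hypothesis, use the exponent identity $(\kappa\rho_j^{\eta/\tilde\eta})^{\tilde\eta}=\kappa^{\tilde\eta}\rho_j^{\eta}$, and let the mesh parameters tend to zero. Your mesh bound $\tilde\delta=\kappa\delta^{\eta/\tilde\eta}$ is in fact the correct exponent (the paper writes $\kappa\delta^{\tilde\eta/\eta}$, an immaterial slip since both tend to $0$ with $\delta$), so nothing further is needed.
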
 

\begin{proof} Let $A\subset X$, $x_j\in X$ and $0<\rho_j<\delta$, $j\in\nat$,
 such that $A\subset \bigcup_{j=1}^\infty F( x_j,\rho_j)$. Then there are
 $z_j\in X$ such that 
  $A\subset \bigcup_{j=1}^\infty \tilde F(z_j,\kappa
 \rho_j^{\eta/\tilde \eta})$.
 Taking $\tilde \delta:= \kappa\delta^{\tilde \eta/\eta}$ we conclude that
\begin{equation*}
 m_{\tilde \eta,\tilde F}^{\tilde\delta}(A)
\le   \sum_{j=1}^\infty  (\kappa \rho_j^{\eta/\tilde \eta})^{\tilde \eta}
= \kappa^{\tilde \eta}  \sum_{j=1}^\infty   \rho_j^{\eta} .
\end{equation*} 
This implies that
\begin{equation*} 
  m_{\tilde \eta,\tilde F}^{\tilde \delta} (A) \le  \kappa^{\tilde \eta} m_{\eta,F}^\delta(A).
\end{equation*} 
 The proof is completed letting $\delta$ tend to $0$.
\end{proof}

 \section{Hausdorff measure with respect to a kernel}\label{G-section}

Let $X$ be a locally compact space with countable base and let 
$G$ be a  positive numerical function on $X\times X$  having the following properties:
\begin{itemize}
\item[\rm (i)] For every $y\in X$,   $G(\cdot,y)$ is lower semicontinuous 
and 
$\limsup_{x\to y} G(x,y)=\infty$.
\item[\rm (ii)] $G$ is continuous outside the diagonal $\Delta$ and Borel measurable on $\Delta$.
\item[\rm (iii)] For every compact $K$ in $X$, there exists a compact   $L$ in $X$ 
such that $G$ is bounded on $(X\setminus L)\times K$.
\end{itemize} 

Let $\M(X)$ denote the set of all  (positive Radon) measures  on $X$. For $\mu\in \M(X)$, we define
\begin{equation*}
G\mu(x):=\int G(x,y)\,d\mu(y), \qquad x\in X.
\end{equation*} 
By (i) and Fatou's lemma, these functions $G\mu$ are lower semicontinuous. 
This implies that, for   $\mu,\nu\in \M(X)$ such that $\nu\le \mu$
and $G\mu$ is continuous and real, the function $G\nu$ is continuous 
and real.

The following definitions are justified by characterizations of polar and semipolar
sets related to the Laplace equation, the heat equation, harmonic spaces and balayage spaces
(associated with diffusions and jump processes), where positive constants are superharmonic  
and there exists a~Green function. 

\begin{definitions}
Let us say that a Borel set $A$ in $X$ is   \emph{$G$-polar}
if there is \emph{no} measure $\mu\ne 0$ such that $\mu(X\setminus A)=0$
and $G\mu$ is bounded.

Let us say that a  Borel set $A$ in $X$ is  \emph{$G$-semipolar}
if there is \emph{no} measure~$\mu\ne 0$ such that $\mu(X\setminus A)=0$
and $G\mu$ is continuous and real. 
\end{definitions}

\begin{remarks} \label{G-semipolar}{\rm
1.   A Borel set $A$ is  $G$-semipolar if there is 
no measure~$\mu\ne 0$ such that $\mu(X\setminus A)=0$
and $G\mu$ is continuous and \emph{bounded}, which implies
that every $G$-polar set is $G$-semipolar.

 Indeed, suppose that $A$ is not $G$-semipolar.
Then there exists a measure $\mu\ne 0$ such that $\mu(X\setminus A)=0$
and $G\mu$ is continuous and real.  Of course, we may choose a~compact $K$
in $A$ such that $\mu(K)>0$. Let $\nu=1_K\mu$. Then $G\nu$ is continuous and real.
In particular, $G\nu$ is bounded on every compact in $X$.
Choosing a compact neighborhood $L$ of $K$ such that $G$ is bounded by some
constant $a$ on $(X\setminus L)\times K$, we obtain that
$                           G\nu(x)=\int G(x,y)\,d\nu(y)\le a\nu(K)<\infty$ 
 for every $x\in X\setminus L$. 

2. If $A_j$, $j\in\nat$, are  Borel sets in $X$, then the union $A:=\cupj A_j$ is $G$-semipolar if
and only if every $A_j$ is $G$-semipolar. 

Indeed, suppose that the sets $A_j$, $j\in\nat$, are $G$-semipolar and consider $\mu\in\M(X)$
such that $\mu(X\setminus A)=0$ and $G\mu$ is continuous and real. Defining  $\mu_j:=1_{A_j}\mu$ we know
that  $\mu_j(X\setminus A_j)=0$ and $G\mu_j$ is continuous and real, hence $\mu_j=0$ for  $j\in\nat$.
So $\mu=0$, and we see that $A$ is $G$-semipolar. The converse is trivial.
}\end{remarks}

We define \emph{$G$-balls} $B(x,\rho)$ by 
\begin{equation*} 
         B(x,\rho):=\{y\in X\colon  G(x,y)> \rho\inv\}, \qquad x\in X, \, \rho>0.
\end{equation*} 
Let us observe that obviously, for all such balls $B(x,\rho)$ (which, by (ii), are Borel sets) and
  measures $\nu$ on $X$,
\begin{equation}\label{nu-B-G}
\nu(B(x,\rho))\le \rho\int_{B(x,\rho)} G(x,y)\,d\nu(y)\le \rho G\nu(x).
\end{equation} 
Moreover, we define an \emph{intrinsic Hausdorff measure} $m_G$ as
follows. 
For every subset~$A$ of~$X$, let
\begin{equation*} 
   m_G^\delta(A):=\inf\big\{\sumj  \rho_j\colon 
                      A\subset \bigcup_{j=1}^\infty B(x_j,\rho_j),\ x_j\in X, \, 0<\rho_j<\delta\big\},
                      \qquad \delta>0,
\end{equation*} 
\begin{equation*} 
m_G(A):=\lim_{\delta\to 0} m_G^\delta(A).
\end{equation*} 
Let us note right away that, for every subset $A$ of $X$, 
\begin{equation}\label{aa-prime}
 m_G(A)=m_G(\tilde A) \qquad\mbox { for some Borel set $\tilde A$ containing $A$}.
\end{equation} 
If $k\in\nat$ and $\a_k:=m_G^{1/k}(A)<\infty$, 
 we   choose  a~covering of $A$ by $G$-balls $B(x_{jk}, \rho_{jk} )$, $j\in\nat$, with
$\rho_{jk}<1/k $ and $\sumj \rho_{jk}<\a_k+1/k$,   and define $A_k:=  \cupj B(x_{jk}, \rho_{jk})$.
Taking $A_k:=X$ otherwise, the set $\tilde A:=\bigcap_{k=1}^\infty A_k$ has the desired property.
 
Our main result is the following.
 
\begin{theorem}\label{main-result}
If  a set  $A$ in $X$ satisfies  $m_G(A)<\infty$, then it is contained in a~$G$-semipolar Borel set.
\end{theorem}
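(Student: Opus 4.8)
The plan is to reduce the theorem to the statement that every \emph{Borel} set $B$ in $X$ with $m_G(B)<\infty$ is itself $G$-semipolar, and then to prove this by contradiction. For the reduction I would invoke (\ref{aa-prime}) to produce a Borel set $\tilde A\supset A$ with $m_G(\tilde A)=m_G(A)<\infty$; if every such Borel set is $G$-semipolar, then $\tilde A$ is the required $G$-semipolar Borel set containing $A$. So suppose, for contradiction, that some Borel $B$ with $m_G(B)<\infty$ is not $G$-semipolar, i.e.\ there is a measure $\mu\ne 0$ with $\mu(X\setminus B)=0$ and $G\mu$ continuous and real. Passing to $1_K\mu$ for a compact $K\subset B$ with $\mu(K)>0$ (whose potential is again continuous and real, by the monotonicity remark following the definition of $G\mu$), I may assume $\mu$ has compact support $K$; note $\mu$ can have no atoms, since $\limsup_{x\to y}G(x,y)=\infty$ from (i) would otherwise destroy continuity of $G\mu$.

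The naive estimate (\ref{nu-B-G}) gives $\mu(B(x,\rho))\le\rho\,G\mu(x)$, which only yields $m_G(B)>0$; this is enough for polarity but far too weak for semipolarity. The crux is to upgrade it to a \emph{uniform} bound $\mu(B(x,\rho))\le\rho\,\eta(\rho)$, valid for all centers $x\in X$, with $\eta(\rho)\to0$. To this end I would introduce the truncated potentials $w_\rho(x):=\int\min\bigl(G(x,y),\rho\inv\bigr)\,d\mu(y)$. Each $w_\rho$ is lower semicontinuous, since $\{\min(G(\cdot,y),\rho\inv)>a\}=\{G(\cdot,y)>a\}$ is open for $a<\rho\inv$ and empty otherwise, so Fatou applies; and writing $w_\rho=G\mu-\int(G(x,y)-\rho\inv)^+\,d\mu(y)$ as a continuous function minus a lower semicontinuous one shows it is also upper semicontinuous, hence continuous and real. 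Since $\min(G,\rho\inv)\uparrow G$ as $\rho\to0$, we have $w_\rho\uparrow G\mu$ pointwise. On a compact neighborhood $L\supset K$, Dini's theorem turns this monotone convergence of continuous functions into uniform convergence, while outside $L$ property (iii) forces $w_\rho=G\mu$ identically once $\rho\inv$ exceeds the bound for $G$ on $(X\setminus L)\times K$; thus $w_\rho\to G\mu$ uniformly on all of $X$.

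The uniform bound then comes from a two-scale comparison: for every $x\in X$ the integrand $\min(G,\rho\inv)-\min(G,(2\rho)\inv)$ equals $(2\rho)\inv$ on $\{G>\rho\inv\}=B(x,\rho)$ and is nonnegative elsewhere, so $\mu(B(x,\rho))\le 2\rho\,\bigl(w_\rho(x)-w_{2\rho}(x)\bigr)\le 2\rho\,\eta(\rho)$ with $\eta(\rho):=\sup_X(w_\rho-w_{2\rho})\to0$. Given any covering of $B$ by $G$-balls $B(x_j,\rho_j)$ with $\rho_j<\delta$, summing this estimate and using $\mu(X\setminus B)=0$ gives $\|\mu\|=\mu(B)\le\sumj\mu(B(x_j,\rho_j))\le 2\bar\eta(\delta)\sumj\rho_j$, where $\bar\eta(\delta):=\sup_{0<\rho<\delta}\eta(\rho)\to0$. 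Taking the infimum over coverings yields $m_G^\delta(B)\ge\|\mu\|/(2\bar\eta(\delta))$, and letting $\delta\to0$ forces $m_G(B)=\infty$, the desired contradiction.

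I expect the main obstacle to be exactly the passage from the pointwise decay of the localized potentials to a modulus $\eta(\rho)$ that is uniform over \emph{all} admissible centers $x$, because the covering balls in the definition of $m_G$ may be centered anywhere and the $G$-balls carry no metric or quasi-triangle structure permitting a Vitali- or Besicovitch-type re-centering. The combination of Dini's theorem (for uniformity on the compact part, made global via (iii)) with the two-scale difference $w_\rho-w_{2\rho}$ (to isolate $\mu(B(x,\rho))$) is precisely what circumvents the need for any covering lemma and should be the technical heart of the argument.
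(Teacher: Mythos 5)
Your argument is correct, and although it follows the same overall architecture as the paper's proof (argue by contradiction, restrict to a compactly supported $\mu$ with $G\mu$ continuous and real, note $\mu$ has no atoms, build a family of \emph{continuous} approximating potentials whose convergence is made uniform by Dini's theorem on a compact $L$ and by property (iii) off $L$, then run the covering count), its two technical pillars are realized by genuinely different devices. The paper localizes the \emph{measure}: it sets $\mu_{x,\rho}:=\varphi\bigl(1/(\rho G(x,\cdot))\bigr)\mu$, sandwiched between $1_{B(x,\rho)}\mu$ and $1_{B(x,2\rho)}\mu$, proves continuity of $p_\rho(x)=G\mu_{x,\rho}(x)$ by a rather delicate cutoff near the point together with the generalized dominated convergence Lemma \ref{Leb-prime}, and obtains the uniform estimate $\mu(B(x,\rho))\le \rho/k$ by stacking $k$ such measures so that the stacked potential at $x$ stays below $1$. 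You instead truncate the \emph{kernel}: the continuity of $w_\rho=\int\min(G(\cdot,y),\rho^{-1})\,d\mu(y)$ follows from the clean two-sided semicontinuity argument (lsc by Fatou, usc as the continuous real $G\mu$ minus the lsc, finite function $\int(G-\rho^{-1})^+\,d\mu$), and the bound $\mu(B(x,\rho))\le 2\rho\,\sup_X(w_\rho-w_{2\rho})$ drops out of a single two-scale difference because the truncation saturates exactly on $B(x,\rho)$. Your route buys a markedly shorter continuity proof and a modulus $\eta(\rho)\to 0$ valid for \emph{all} centers $x\in X$ simultaneously, whereas the paper must separately discard balls centered off $L$; the paper's construction has the mild advantage that its objects are measures whose relation to the $G$-balls is visible without computation. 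Both proofs are complete; I see no gap in yours.
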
 
 
In its proof we shall use the following simple generalization
of the Lebesgue dominated convergence theorem (straightforward
application of Fatou's lemma both to the sequence $(f_n)$ and the sequence 
$(g_n-f_n)1_{\{g_n<\infty\}}$).

\begin{lemma}\label{Leb-prime}
Let $(Y,\A, \lambda)$ be a measure space and let $f_n, g_n $ be  
$\A$-measurable functions on $Y$ such that $0\le f_n\le g_n$, the sequence 
 $(f_n)$ converges pointwise to~$f$, the sequence $(g_n)$ converges pointwise to~$g$,
and $\lim_{n\to\infty} \int g_n\,d\lambda=\int g\,d\lambda<\infty$. Then 
$\lim_{n\to \infty}\int f_n\,d\lambda=\int f\,d\lambda$.
\end{lemma}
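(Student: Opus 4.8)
The plan is to obtain the conclusion by sandwiching $\lim_n \int f_n\,d\lambda$ between $\int f\,d\lambda$ from below and from above, each bound coming from one application of Fatou's lemma. Since the functions $f_n$ are nonnegative and $f_n \to f$ pointwise, Fatou's lemma applied directly to $(f_n)$ yields at once the lower bound
\begin{equation*}
\int f\,d\lambda = \int \liminf_n f_n\,d\lambda \le \liminf_n \int f_n\,d\lambda.
\end{equation*}
The substance of the argument is therefore the reverse inequality $\limsup_n \int f_n\,d\lambda \le \int f\,d\lambda$, and this is where the hypothesis $\lim_n \int g_n\,d\lambda = \int g\,d\lambda < \infty$ enters.

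For the upper bound I would apply Fatou's lemma to the nonnegative functions $h_n := (g_n - f_n)\,1_{\{g_n<\infty\}}$. The indicator is inserted precisely to render the difference $g_n - f_n$ meaningful where $g_n$ might take the value $+\infty$; on $\{g_n<\infty\}$ we have $0 \le g_n - f_n$ because $f_n \le g_n$, so each $h_n$ is a legitimate nonnegative measurable function. The finiteness $\int g\,d\lambda < \infty$ forces $g < \infty$ $\lambda$-a.e., and at any point where $g(y)<\infty$ the convergence $g_n(y)\to g(y)$ gives $g_n(y)<\infty$ for all large $n$, so that $h_n(y) = g_n(y) - f_n(y) \to g(y) - f(y)$. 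Hence $\liminf_n h_n = g - f$ almost everywhere, and Fatou's lemma gives
\begin{equation*}
\int (g - f)\,d\lambda \le \liminf_n \int h_n\,d\lambda.
\end{equation*}

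It remains to unwind the right-hand side. For all large $n$ one has $\int g_n\,d\lambda < \infty$ (these integrals converge to the finite number $\int g\,d\lambda$), whence $g_n < \infty$ a.e.\ and the indicator may be dropped, giving $\int h_n\,d\lambda = \int g_n\,d\lambda - \int f_n\,d\lambda$. Using that $\int g_n\,d\lambda \to \int g\,d\lambda$ exists and is finite, the elementary identity $\liminf_n (a_n - b_n) = \lim_n a_n - \limsup_n b_n$ turns the displayed inequality into
\begin{equation*}
\int g\,d\lambda - \int f\,d\lambda \le \int g\,d\lambda - \limsup_n \int f_n\,d\lambda,
\end{equation*}
and since $\int g\,d\lambda < \infty$ (and $\int f\,d\lambda \le \int g\,d\lambda < \infty$) this common term cancels to yield $\limsup_n \int f_n\,d\lambda \le \int f\,d\lambda$. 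Combined with the lower bound from the first step, this proves $\lim_n \int f_n\,d\lambda = \int f\,d\lambda$. The only genuine care is the bookkeeping around infinite values: one must ensure the subtractions $g_n - f_n$ and the splitting of $\int h_n\,d\lambda$ are justified, which is exactly what the truncation $1_{\{g_n<\infty\}}$ together with the a.e.\ finiteness of $g$ secures; the rearrangement of the $\liminf$ of a difference also relies essentially on the convergence, not the mere boundedness, of $\int g_n\,d\lambda$.
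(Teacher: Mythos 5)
Your proof is correct and is exactly the paper's argument: the paper proves this lemma by noting it is a ``straightforward application of Fatou's lemma both to the sequence $(f_n)$ and the sequence $(g_n-f_n)1_{\{g_n<\infty\}}$'', which is precisely your two applications of Fatou. Your careful bookkeeping around infinite values of $g_n$ and the cancellation of $\int g\,d\lambda$ simply spells out the details the paper leaves implicit.
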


\begin{proof}[Proof of Theorem \ref{main-result}]
By (\ref{aa-prime}), it suffices to consider a Borel set $A$ in $X$.

a) Suppose that $A$ is not $G$-semipolar. 
Then we may choose $\mu_0\in \M(X)$ such that $\mu_0(A)>0$,
$\mu_0(X\setminus A)=0$, and $G\mu_0$ is continuous and real.
Let $K$ be a compact in $A$ such that $\mu_0(K)>0$, and define $\mu:=1_K\mu_0$.
By (i),  $\limsup_{x\to y} G(x,y)=\infty$,  and hence 
\begin{equation}\label{point-zero}
 \mu(\{y\})=0\quad\mbox{  for every }y\in X.
\end{equation} 

b) Now let us fix a decreasing continuous function~$\vp$ on $[0,\infty]$ such that
$\vp=1$ on~$[0,1]$ and $\vp=0$ on~$[2,\infty]$. For $x\in X$ and $\rho>0$, let 
\begin{equation*}
        \mu_{x,\rho}:=\vp\left(\frac 1{\rho G(x,\cdot)}\right) \mu
\und p_\rho(x):=G\mu_{x,\rho}(x).
\end{equation*} 
Then
\begin{equation}\label{B-mu-B} 
       1_{B(x,\rho)}\mu\le \mu_{x,\rho}\le 1_{B(x,2\rho)}\mu \und
 p_\rho(x)\le p_\sigma(x)\mbox{ if }\rho\le \sigma.
\end{equation} 
Since $\mu(\{x\})=0$, by (\ref{point-zero}), and since  $\bigcap_{\rho>0} B(x,2\rho)$ is either empty or $\{x\}$, 
we see, in particular, that $p_\rho(x)\downarrow 0$ as $\rho\downarrow 0$.

Next we claim that all functions 
$p_\rho$ are continuous. To that end let us fix $\rho>0$,
$x_0\in X$, and $\ve>0$. Since $\mu(\{x_0\})=0$, we may choose a continuous 
function $0\le \psi\le 1$ which equals~$1$ on a neighborhood $V_0$ of $x_0$
and satisfies $G(\psi\mu)(x_0)<\ve$. Then $G(\psi\mu)$  is continuous and real.
So there exists a compact neighborhood $K_0$ of~$x_0$ in~$V_0$ such that 
$G(\psi\mu)<\ve$ on $K_0$. 
For every $x\in K_0$, let 
\begin{equation*}
  \vp_x:=(1-\psi)\vp\left(\frac 1{\rho G(x,\cdot)}\right), \qquad
  \mu_x:=\vp_x\mu, \qquad q(x):=G\mu_x(x).
\end{equation*} 
Then $0\le \vp_x\le 1$ and, for every $x\in K_0$, 
\begin{equation}\label{ve}
 p_\rho(x)-q(x)= G(\psi\mu_{x,\rho})(x)\le G(\psi\mu)(x)<\ve.
\end{equation}
To finish the proof of our claim let us finally  fix a sequence $(x_n)$ in $K_0$
converging to $x_0$. If $y\in X\setminus V_0$, then
\begin{equation*}
        \lim_{n\to \infty} G(x_n,y)=G(x_0,y) \und
        \lim_{n\to \infty} \vp_{x_n}(y)=\vp_{x_0}(y).
\end{equation*} 
If $y\in V_0$, then $\vp_{x_n}(y)=0=\vp_{x_0}(y)$ for every $n\in\nat$.
Hence the sequence $(\vp_{x_n}G(x_n,\cdot))$ converges pointwise
to   $\vp_{x_0}G(x_0,\cdot)$. Since $\lim_{n\to\infty} G\mu(x_n)=G\mu(x_0)$
and $G\mu(x_0)$ is finite, 
we thus conclude, by Lemma \ref{Leb-prime},  that 
\begin{equation*} 
 \lim_{n\to\infty} q(x_n)=q(x_0).
\end{equation*} 
 If $n$ is large enough, then $|q(x_n)-q(x_0)|<\ve$,
and   $|p_\rho(x_n)-p_\rho(x_0)|<3\ve$, by (\ref{ve}).

c) By (iii), there exist a compact neighborhood $L$ of $K$ and $a>0$ such that 
$G\le a$ on $(X\setminus L)\times K$. By Dini's theorem, there exists
a~decreasing sequence $(\tau_n)$  such that $0<\tau_n<1/n$ and
\begin{equation}\label{dini}
     G\mu_{x, \tau_n}(x)=  p_{\tau_n}(x)<2^{-n}, \qquad  x\in L,
\end{equation} 
for every $n\in\nat$.
For the moment, let us fix $k\in\nat$, $x\in L$ , 
$0<\rho<\tau_k$ and define
\begin{equation*}
\nu:=\sum_{n=1}^k \mu_{x,\tau_n}.
\end{equation*} 
By  (\ref{nu-B-G}) and (\ref{dini}), 
 $\nu(B(x,\rho))\le \rho G\nu(x)<\rho$. 
By  (\ref{B-mu-B}),  $\mu_{x,\tau_n}(B(x,\tau_n))=\mu(B(x,\tau_n))$
for every $1\le n\le k$.
Thus
\begin{equation}\label{ess}
               k \mu(B(x,\rho))= \sum_{n=1}^k \mu_{x,\tau_n} (B(x,\rho))
        =\nu(B(x,\rho))  <\rho.
\end{equation}

d) Again let $k\in\nat$ and let us fix $0<\delta<\min\{\tau_k, a\inv\}$.
Let us consider $x_j\in X$ and $0<\rho_j<\delta$, $j\in\nat$,  such that the sets $B(x_j,\rho_j)$ cover $K$. 
If $j\in\nat$ such that $x_j\in X\setminus L$, then $G(x_j,y)\le a<\delta\inv <\rho_j\inv $
for all $y\in K$, and thus $B(x_j,\rho_j)\cap K=\emptyset$. So we may assume  that $x_j\in L$ for every $j\in\nat$. 
Then, by (\ref{ess}),
\begin{equation*}
              k\mu(K)\le k\sum_{j=1}^\infty \mu(B(x_j,\rho_j))
<\sum_{j=1}^\infty \rho_j.
\end{equation*} 
Thus $m_G^{\delta}(K)\ge k\mu(K)$. This shows that  $m_G(K)=\infty$, and hence $m_G(A)=\infty$.
\end{proof} 

To deal with $G$-polar sets, we define, for sets $A$ in $X$, 
\begin{equation*}
        c(A):=\sup\{\nu^\ast(A)\colon \mbox{$\nu\in\M(X)$, $G\nu\le 1$}\}
\end{equation*} 
($\nu^\ast(A)$ denoting the outer measure of $A$). If $A$ is universally measurable, then 
\begin{equation*}
         c(A):=\sup\{\nu(A)\colon \mbox{$\nu\in\M(X)$, 
              $ \nu(X\setminus A)=0$, $G\nu\le 1$}\}
\end{equation*} 
(it suffices to note that whenever $\nu\in\M(X)$ such that $G\nu\le 1$,
then $\nu':=1_A\nu$ satisfies $G\nu'\le 1$ and $\nu'(A) =\nu(A)$).
In particular, a Borel set $A$ in $X$ is $G$-polar if and only if $c(A)=0$.

\begin{proposition}\label{cap-mG}
For every set $A$ in $X$, $c   (A)\le m_G(A)$.
\end{proposition}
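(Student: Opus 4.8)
The plan is to bound the capacity-type quantity $c(A)$ by the Hausdorff-type measure $m_G(A)$ using the fundamental estimate (\ref{nu-B-G}), which says that for any $G$-ball and any measure $\nu$ with $G\nu\le 1$ one has $\nu(B(x,\rho))\le \rho G\nu(x)\le \rho$. This single inequality is precisely the link between the ``mass in a ball'' and the ``radius'' that a Hausdorff-measure comparison requires, so the whole argument is a standard mass-distribution (Frostman-type) covering estimate.

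\medskip

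First I would reduce to the case $m_G(A)<\infty$, since otherwise the asserted inequality $c(A)\le m_G(A)=\infty$ is trivial. Next, fix an arbitrary competitor measure $\nu\in\M(X)$ with $G\nu\le 1$; by the remark following the definition of $c$, I may replace $\nu$ by $1_A\nu$ and hence assume $\nu(X\setminus A)=0$, so that $\nu^\ast(A)=\nu(A)=\nu(X)$. The goal is then to show $\nu(X)\le m_G(A)$, since taking the supremum over all such $\nu$ yields $c(A)\le m_G(A)$. I would also note that, by (\ref{aa-prime}), it is enough to work with a fixed Borel cover, and that it suffices to estimate $\nu(A)$ against $m_G^\delta(A)$ for each $\delta>0$ and then let $\delta\to 0$.

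\medskip

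The key step is the covering estimate. Fix $\delta>0$ and let $\{B(x_j,\rho_j)\}_{j\in\nat}$ be any countable cover of $A$ by $G$-balls with $0<\rho_j<\delta$. Since $\nu$ is concentrated on $A$, countable subadditivity gives
\begin{equation*}
\nu(A)=\nu\Bigl(\bigcup_{j=1}^\infty B(x_j,\rho_j)\cap A\Bigr)\le \sum_{j=1}^\infty \nu(B(x_j,\rho_j)).
\end{equation*}
Applying (\ref{nu-B-G}) to each term together with $G\nu\le 1$ yields $\nu(B(x_j,\rho_j))\le \rho_j G\nu(x_j)\le \rho_j$, so that
\begin{equation*}
\nu(A)\le \sum_{j=1}^\infty \rho_j.
\end{equation*}
Taking the infimum over all admissible covers gives $\nu(A)\le m_G^\delta(A)$, and this bound is uniform in $\delta$.

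\medskip

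Finally I would let $\delta\to 0$ to obtain $\nu(A)\le m_G(A)$. Since $\nu$ was an arbitrary measure with $G\nu\le 1$ (and $\nu(X\setminus A)=0$), taking the supremum over all such $\nu$ gives $c(A)\le m_G(A)$, as desired. I do not expect any genuine obstacle here: the entire content is already packaged in (\ref{nu-B-G}), and the remaining work is the routine bookkeeping of passing from a single measure to the capacity supremum and of interchanging the covering infimum with the $\delta\to 0$ limit. The only mild care needed is the reduction via $1_A\nu$ so that the outer-measure definition of $c(A)$ matches the mass-on-$A$ estimate, which is exactly the observation recorded just before the statement.
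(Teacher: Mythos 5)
Your proof is correct and follows essentially the same route as the paper: cover $A$ by $G$-balls, apply (\ref{nu-B-G}) together with $G\nu\le 1$ to bound each ball's mass by its radius, and pass to the infimum over covers and the supremum over admissible measures. The only superfluous step is your reduction to $\nu(X\setminus A)=0$, which is not actually available for a non-universally-measurable $A$ (that remark in the paper is stated only for universally measurable sets) but is also not needed, since countable subadditivity of the outer measure already gives $\nu^\ast(A)\le\sum_j\nu(B(x_j,\rho_j))$ directly, which is exactly how the paper argues.
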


\begin{proof} Let $\nu\in \M(X)$ such that $G\nu\le 1$, and let $x_j\in X$
and $\rho_j>0$, $j\in\nat$,  such that $A\subset \cupj B(x_j,\rho_j)$. Then, by (\ref{nu-B-G}),
\begin{equation*} 
  \nu^\ast(A)\le \sumj \nu(B(x_j,\rho_j))\le \sumj \rho_j G\nu(x_j)\le \sumj \rho_j.
\end{equation*} 
This clearly implies that $c  (A)\le m_G(A)$.
\end{proof}

\begin{corollary}\label{m-polar} 
If  a set  $A$ in $X$ satisfies  $m_G(A)=0$, then it is contained in a~$G$-polar Borel set.
\end{corollary}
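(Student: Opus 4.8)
The plan is to derive Corollary \ref{m-polar} as a quick consequence of the two results already in hand, namely Proposition \ref{cap-mG} (which gives $c(A)\le m_G(A)$ for every set $A$) and the stated characterization that a Borel set $A$ is $G$-polar if and only if $c(A)=0$. The route mirrors the logical structure of the analogous semipolar statement in Theorem \ref{main-result}: first reduce to Borel sets via the measurable hull, then show that the hull itself is $G$-polar.

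First I would reduce to the Borel case. By the analogue of (\ref{aa-prime}) applied with $m_G(A)=0$, there is a Borel set $\tilde A\supset A$ with $m_G(\tilde A)=m_G(A)=0$. Indeed, the construction preceding (\ref{aa-prime}) produces, for each $k$ with $\a_k:=m_G^{1/k}(A)<\infty$, a Borel overset $A_k$ assembled from a near-optimal covering by $G$-balls; setting $\tilde A:=\bigcap_{k} A_k$ yields a Borel set containing $A$ with the same $m_G$-value. Since $m_G(A)=0$, we obtain $m_G(\tilde A)=0$ with $\tilde A$ Borel. It therefore suffices to prove that $\tilde A$ is $G$-polar, as any $G$-polar Borel overset of $A$ settles the corollary.

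Next I would invoke the comparison inequality. By Proposition \ref{cap-mG} applied to $\tilde A$,
\begin{equation*}
c(\tilde A)\le m_G(\tilde A)=0,
\end{equation*}
so $c(\tilde A)=0$. By the characterization stated just before Proposition \ref{cap-mG}, a Borel set is $G$-polar exactly when its capacity $c$ vanishes; hence $\tilde A$ is a $G$-polar Borel set. Since $A\subset\tilde A$, the set $A$ is contained in a $G$-polar Borel set, which is the assertion.

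I do not expect a genuine obstacle here: the heavy lifting was already done in Proposition \ref{cap-mG} and in the earlier verification that $c(A)=0$ characterizes $G$-polarity for Borel sets. The only point requiring a little care is the reduction step, where one must confirm that the measurable-hull construction behind (\ref{aa-prime}) indeed delivers a Borel set of the same (here vanishing) intrinsic measure; but this is purely a matter of quoting (\ref{aa-prime}), which is stated for arbitrary subsets of $X$. Thus the proof is essentially a two-line chain: pass to a Borel hull, then apply $c\le m_G$ together with the $G$-polarity criterion.
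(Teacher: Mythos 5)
Your argument is correct and is exactly the route the paper intends: the corollary is stated without proof immediately after Proposition \ref{cap-mG} precisely because one passes to a Borel hull via (\ref{aa-prime}), applies $c(\tilde A)\le m_G(\tilde A)=0$, and invokes the characterization of $G$-polarity of Borel sets by $c=0$. No gaps.
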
 

\section{Application to harmonic spaces and balayage spaces}\label{habasp}

Let $(X,\H)$ be a $\mathcal P$-harmonic space or, more generally, let $(X,\W)$ be a balayage space; 
 see \cite{bauer66, BH,H-course,Const}. 
Let us assume that the constant function $1$ is superharmonic and that we have a~(Green) function
$G\colon X\times X\to [0,\infty]$ having the following properties (note that (i$'$) implies (i) from Section \ref{G-section}):
\begin{itemize}
\item[\rm (i$'$)] For every $y\in X$, the function $G(\cdot,y)$ is a potential on $X$
which has  superharmonic support $\{y\}$ and satisfies $\limsup_{x\to y} G(x,y)=\infty$.
\item[\rm (ii)] $G$ is continuous outside the diagonal $\Delta$ and Borel measurable on $\Delta$.
\item[\rm (iii)] For every compact $K$ in $X$, there exists a compact   $L$ in $X$
such that $G$ is bounded on $(X\setminus L)\times K$.
\item[\rm (iv)] For every continuous real potential $p$ on $X$ having compact 
superharmonic support there exists a~measure  $\mu\in \M(X)$  such that 
\begin{equation*}
p=G\mu:=\int G(\cdot,y)\,d\mu(y).
\end{equation*} 
\end{itemize} 

In the case of a harmonic space $(X,\H)$, property (iii) follows from (i$'$) and (ii).
Indeed, let $K$ be a compact in $X$ and let $L$ be any compact neighborhood of $X$.
Then $G$ is bounded on $\partial L\times K$ by some $a>0$, and hence 
$G\le a$ on $(X\setminus L)\times K$, by the minimum principle; see \cite[III.6.6]{BH}.

Let us recall that, for all subsets $A$ of $X$ and superharmonic functions $u\ge 0$,
the reduced function $R_u^A$,  defined to be the  infimum of all positive superharmonic functions majorizing $u$
on $A$, is harmonic on $X\setminus \ov A$, 
 and that its greatest lower semicontinuous minorant $\hat R_u^A$ (the balayage of $u$ on $A$) 
is superharmonic on $X$. 

A subset $A$ of $X$ is called \emph{polar} if $\hat R_1^A=0$ (or, equivalently, if $\hat R_u^A=0$ for every superharmonic 
function $u\ge 0$ on $X$). It is called \emph{semipolar} if it is a countable union of sets $T_j$, $j\in\nat$, 
which are \emph{totally thin}, that is, such that, for every $x\in X$, there exists a superharmonic function $u\ge 0$
on $X$ with $\hat R_u^{T_j}(x)<u(x)$. Of course, every polar set is totally thin and hence semipolar.
Moreover, it is immediately seen that countable unions of polar (semipolar) sets are polar (semipolar)
and that  every subset of a polar (totally thin, semipolar) set is polar
(totally thin, semipolar). 

\begin{proposition}\label{G-semipolar-semipolar}
Let $A$ be a Borel set in $X$. Then $A$ is semipolar if and only if $A$ is $G$-semipolar.
\end{proposition}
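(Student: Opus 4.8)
The plan is to prove the two implications separately, after first recording that, thanks to (i$'$) and (iv), the notion of $G$-semipolarity is an intrinsic statement about continuous real potentials. Indeed, for $\mu\in\M(X)$ the function $G\mu$ is an integral of the potentials $G(\cdot,y)$, hence is itself a potential whenever it is not identically $+\infty$; if in addition it is continuous and real it is a continuous real potential, and conversely, by (iv), every continuous real potential with compact superharmonic support is of the form $G\mu$. Thus a Borel set $A$ is $G$-semipolar precisely when it carries no nonzero measure whose potential $G\mu$ is continuous and real, and the proposition becomes the assertion that this property characterises semipolar Borel sets.

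For the implication ``semipolar $\Rightarrow$ $G$-semipolar'' I would argue by contradiction through a reduction to totally thin sets. Suppose $A$ is semipolar and $\mu\ne 0$ is carried by $A$ with $G\mu$ continuous and real; after replacing $\mu$ by $1_K\mu$ for a suitable compact $K$ (following part~(a) of the proof of Theorem~\ref{main-result}) we may assume $\mu$ has compact support and $p:=G\mu$ is a continuous real potential. Writing $A=\bigcup_j(A\cap T_j)$ with each $T_j$ a totally thin set, where the pieces may be taken Borel, and using Remarks~\ref{G-semipolar}.2 (a countable union of $G$-semipolar Borel sets is $G$-semipolar), it suffices to reach a contradiction when $A=T$ is a totally thin Borel set. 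The heart of the matter is then the principle, available in the theory of balayage spaces (\cite{BH}), that a measure whose potential is continuous and real charges no semipolar set; applied to the nonzero $\mu$ carried by the totally thin (hence semipolar) set $T$ this yields $\mu=0$, the desired contradiction. Alternatively one derives this principle from the description of thinness of $T$ at the points of $T$ via the reduced function $\hat R_p^{\,T}$, together with the continuity principle for potentials, the point being that thinness forces $\hat R_p^{\,T}$ to fall below $p$ on a set carrying $\mu$ and thereby obstructs the continuity of $p$.

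For the converse ``$G$-semipolar $\Rightarrow$ semipolar'' I would again argue by contraposition: assuming $A$ is not semipolar, I must produce a nonzero measure carried by (a subset of) $A$ whose potential is continuous and real. By capacitability, and since subsets of semipolar sets are semipolar, a non-semipolar Borel set contains a compact non-semipolar set $K$, and the key input is the existence theorem of the theory of semipolar sets: a non-semipolar compact carries a nonzero measure $\mu$ whose potential $G\mu$ is a continuous real potential (this is where property (iv), the representation of continuous real potentials with compact superharmonic support, and the structure theory of \cite{BH} enter). Restricting such a $\mu$ to $A$ if necessary keeps the potential continuous and real (since $G\nu\le G\mu$ for $\nu\le\mu$, as noted after the definition of $G\mu$), and shows that $A$ is not $G$-semipolar.

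The main obstacle is this converse direction: constructing, on an arbitrary non-semipolar set, a nonzero measure with a genuinely continuous (not merely bounded or finely continuous) potential. This existence-type statement — essentially that non-semipolar sets support continuous potentials — is where the substance of the proposition lies, and it is considerably deeper than the bookkeeping in the other direction. By contrast, ``semipolar $\Rightarrow$ $G$-semipolar'' is comparatively soft once the non-charging principle is granted, the passage to totally thin sets being handled entirely by Remarks~\ref{G-semipolar}.2.
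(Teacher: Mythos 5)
Your proposal is correct and follows essentially the same route as the paper: the paper's proof is a one-line citation of (iv) together with the characterization of semipolar sets in \cite[VI.8.8]{BH} (plus Remark \ref{G-semipolar}.2), and your argument simply unpacks the two halves of that cited result — the non-charging principle for measures with continuous real potentials, and the existence of such a measure on any non-semipolar compact. You have also correctly located where the real depth lies, namely in the existence half, which the paper delegates entirely to \cite{BH}.
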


\begin{proof} Immediate consequence of (iv) and \cite[VI.8.8]{BH} (using Remark \ref{G-semipolar},2). 
\end{proof} 

\begin{corollary}\label{m-semipolar}
If $A\subset X$ satisfies $m_G(A)<\infty$, then $A$ is semipolar.
\end{corollary} 

\begin{proof} Theorem \ref{main-result} and Proposition \ref{G-semipolar-semipolar}. 
\end{proof} 

To get a corresponding result for polar sets let us 
suppose from now on in this section that the following holds:
\begin{itemize}
\item[\rm ($\ast$)]
For every compact $K$ in $X$, the potential $\hat R_1^K$ is harmonic on $X\setminus K$.
\end{itemize} 

We observe that ($\ast$) does not hold in general; see \cite[V.9.1]{BH} for an example of a~balayage space
and a~compact $K$ such that  $\hat R_1^K$ is not harmonic on $X\setminus K$.

However, ($\ast$) is satisfied in the case of a harmonic space, since 
$R_1^K$ is harmonic  on~$X\setminus K$, and hence $\hat R_1^K= R_1^K$ on $X\setminus K$. 
It also holds in the case of a balayage space, where
every semipolar set is polar (since then the semipolar sets $\{\hat R_1^K<R_1^K\}$ are polar and  harmonic measures 
do not charge polar sets; see \cite[VI.5.11 and VI.5.6]{BH}).

Moreover, we assume that $G$ has the following property:
\begin{itemize}
\item [\rm (iv$'$)] For every bounded potential $p$ with compact superharmonic support
there exists a measure $\mu\in\M(X)$ such that $p=G\mu$.
\end{itemize} 

Clearly,  (iv$'$) implies  (iv),  since,  by the minimum principle,
every continuous real potential~$p$ with compact superharmonic support $K$
is bounded by its maximum on~$K$; see \cite[III.6.6]{BH}. 

Conversely,  by \cite[Theorem 4.1]{HN-representation},   (iv$'$) is a consequence of (iv)
if, for every $x\in X$,  the function $G(x,\cdot)$ is lower semicontinuous at $x$
 and, if $x$ is finely isolated (and not isolated), continuous at $x$.  

\begin{proposition}\label{G-polar-polar}
Every $G$-polar Borel set in $X$ is polar. 
\end{proposition}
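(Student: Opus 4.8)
The plan is to prove the contrapositive: assuming $A$ is a Borel set that is \emph{not} polar, I will produce a measure $\mu \ne 0$ supported by $A$ with $G\mu$ \emph{bounded}, which by Remark \ref{G-semipolar}.1 shows $A$ is not $G$-polar. The natural source for such a measure is the capacity-theoretic content of the hypothesis. Since $A$ is not polar, by definition $\hat R_1^A \ne 0$, so there is a point $x_0$ where $\hat R_1^A(x_0) > 0$. First I would reduce to a compact set: there should exist a compact $K \subset A$ that is still not polar (capacities are inner regular on Borel, or one invokes the countable-union property of polar sets to pass from $A$ to a nonpolar compact subset), so $\hat R_1^K \ne 0$ as well.

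Next I would exploit assumption ($\ast$): for the compact $K$, the potential $p := \hat R_1^K$ is harmonic on $X \setminus K$. Being the balayage of the constant $1$ on a compact set, $p$ is a potential with superharmonic (carrier) support contained in $K$, hence compact, and it is bounded by $1$. This is exactly the hypothesis needed to apply (iv$'$): there exists a measure $\mu \in \M(X)$ with $p = G\mu$. Since $p$ is bounded, $G\mu$ is bounded, and since $p = \hat R_1^K \ne 0$ we get $\mu \ne 0$. What remains is to check that $\mu$ is carried by $K$ (and hence by $A$): the superharmonic support of $G\mu = p$ sits on $K$, which via (i$'$) — where $G(\cdot, y)$ has superharmonic support exactly $\{y\}$ — forces $\mu(X \setminus K) = 0$. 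Replacing $\mu$ by $1_A \mu$ if necessary (which only lowers $G\mu$, keeping it bounded, by the monotonicity remark following the definition of $G\mu$ in Section \ref{G-section}), I obtain a nonzero measure on $A$ with bounded potential, contradicting $G$-polarity.

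The main obstacle I anticipate is the support bookkeeping: translating the harmonicity statement ($\ast$), namely that $\hat R_1^K$ is harmonic off $K$, into the precise assertion that the representing measure $\mu$ from (iv$'$) lives on $K$. The definition of $G$-polarity asks for $\mu(X \setminus A) = 0$, so I genuinely need $\mu$ concentrated on $A$, not merely $G\mu$ bounded. The cleanest route is to note that where $G\mu = \hat R_1^K$ is harmonic — that is, on the open set $X \setminus K$ — the measure $\mu$ cannot charge it, because $G(\cdot, y)$ is a genuine potential with superharmonic support $\{y\}$ by (i$'$), so any mass of $\mu$ at a point $y \in X \setminus K$ would contribute a non-harmonic (superharmonic, non-harmonic at $y$) component to $G\mu$ near $y$. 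One must be slightly careful that ($\ast$) is invoked precisely to guarantee $p$ is harmonic off $K$ rather than merely off $\overline{A}$ or some larger set; this is exactly why ($\ast$) was assumed, and it is the lynchpin making the support argument go through. Once the support is pinned down, the rest is the routine verification already isolated in the earlier remarks.
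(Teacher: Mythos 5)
Your proof is correct and follows essentially the same route as the paper: reduce to a nonpolar compact $K\subset A$, use ($\ast$) to see that $\hat R_1^K$ is a bounded potential with superharmonic support in $K$, represent it as $G\mu$ via (iv$'$), and pin the support of $\mu$ to $K$ using (i$'$). The only cosmetic difference is that the final replacement of $\mu$ by $1_A\mu$ is unnecessary, since $\mu$ is already carried by $K\subset A$.
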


\begin{proof} Let $A$ be a Borel set in $X$ which is not polar.
  By \cite[VI.5.5]{BH}, there exists a~compact $K$ in $A$ such that $K$ is not polar,
that is, $\hat R_1^K\ne 0$. Of course, $\hat R_1^K\le 1$.  By (iv$'$),  $\hat R_1^K=G\mu$
for some $\mu\in \M(X)$. By ($\ast$), $\hat R_1^K$ is harmonic on $X\setminus K$,
and hence $\mu(X\setminus K)=0$, by (i$'$). Therefore $A$ is not $G$-polar.
\end{proof}

\begin{corollary}\label{m0polar}
If $A\subset X$ satisfies $m_G(A)=0$, then $A$ is polar. 
\end{corollary}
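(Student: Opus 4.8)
The plan is to read off the statement as the composition of the two results just established, closed under the heredity of polarity. Concretely, I would argue in three short steps. First, since $m_G(A)=0$, Corollary \ref{m-polar} produces a $G$-polar Borel set $B$ with $A\subset B$. Second, Proposition \ref{G-polar-polar} — which is exactly where the standing assumptions ($\ast$) and (iv$'$) of this section are consumed — upgrades $G$-polarity of the Borel set $B$ to genuine polarity, so that $B$ is polar. Third, because every subset of a polar set is polar (as recorded in the paragraph just before Proposition \ref{G-semipolar-semipolar}), the inclusion $A\subset B$ forces $A$ itself to be polar. This closes the argument, and I expect no genuine obstacle at this level, since all the analytic work has already been carried out upstream.

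It is nonetheless worth flagging where that work sits, so that the citation chain is not misleading. The passage from the Hausdorff-type smallness condition to a $G$-polar cover in Corollary \ref{m-polar} rests on the capacity estimate $c(A)\le m_G(A)$ of Proposition \ref{cap-mG} together with the Borel-approximation property (\ref{aa-prime}): when $m_G(A)=0$ one first replaces $A$ by a Borel set $\tilde A\supset A$ of the same vanishing $m_G$-measure, and then $c(\tilde A)\le m_G(\tilde A)=0$ identifies $\tilde A$ as $G$-polar by the characterization that a Borel set is $G$-polar precisely when $c=0$. The genuinely delicate input is Proposition \ref{G-polar-polar}, whose proof selects a non-polar compact $K\subset A$ via \cite[VI.5.5]{BH}, represents $\hat R_1^K$ as a potential $G\mu$ using (iv$'$), and then invokes ($\ast$) together with (i$'$) to confine the mass of $\mu$ to $K$. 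With both of these available, the corollary demands nothing beyond the bookkeeping above.
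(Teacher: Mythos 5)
Your argument is correct and is exactly the paper's proof: Corollary \ref{m-polar} supplies a $G$-polar Borel cover, Proposition \ref{G-polar-polar} upgrades it to a polar set, and heredity of polarity finishes. The additional commentary on where ($\ast$), (iv$'$), and Proposition \ref{cap-mG} enter is accurate but not needed beyond the two citations.
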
 

\begin{proof}
Corollary \ref{m-polar} and  Proposition \ref{G-polar-polar}.
\end{proof}

\section{Application to standard examples}\label{standard} 
\subsection{Classical potential theory and Riesz potentials}

  Let $X=\realn$,   $0<\b<n$, $0<\b\le 2$. If $\b=2$ (and $n\ge 3$),  
let $(\realn,\H)$ be the
harmonic space associated with the Laplace operator. If $\b<2$, let $(\realn, \W)$ be the 
balayage space, where every superharmonic function is an increasing limit of Riesz potentials;
see \cite[Section V.4]{BH}. In both cases, the function $G$ defined by
\begin{equation*} 
                            G(x,y):=|x-y|^{\b-n}, \qquad x,y\in\realn,
\end{equation*} 
has the properties (i$'$), (ii), (iii) and (iv$'$) and every semipolar
set is polar. Of course,
 for all $x\in \realn$ and $\rho>0$,
\begin{equation*}
                   \{y\in \realn \colon G(x,y)>\rho\inv\}=\{y\in \realn\colon |x-y| < \rho^{n-\b}\}.
\end{equation*} 
Hence $m_G$ is the usual $(n-\b)$-dimensional Hausdorff measure $m_{t^{n-\b}}$ on $\realn$.
Therefore Corollary \ref{m-semipolar}  yields the following,

\begin{theorem}\label{class-riesz}
If a set $A$ in $\realn$ satisfies  $m_{t^{n-\b}}(A)<\infty$, then $A$ is polar.
\end{theorem}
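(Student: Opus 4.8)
The plan is to deduce Theorem \ref{class-riesz} directly from the machinery already assembled, treating it as a transparent specialization rather than reproving anything. First I would record the two structural facts supplied by the setup on $\realn$ with $G(x,y):=|x-y|^{\b-n}$: that $G$ satisfies properties (i$'$), (ii), (iii), (iv$'$) together with ($\ast$) (because every semipolar set is polar here), and that consequently all the hypotheses feeding Corollaries \ref{m-semipolar} and \ref{m0polar} are in force. These are cited as known from \cite[Section V.4]{BH}, so I would not verify them but simply invoke them.

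The one computation I would actually carry out is the identification of the $G$-balls. From the definition $B(x,\rho)=\{y:G(x,y)>\rho\inv\}$ and the explicit formula, one has $|x-y|^{\b-n}>\rho\inv$; since $\b-n<0$, exponentiating by the negative power $1/(\b-n)$ reverses the inequality and yields $|x-y|<\rho^{\,n-\b}$ (using $(\rho\inv)^{1/(\b-n)}=\rho^{1/(n-\b)}\cdots$, i.e. $\rho^{\,n-\b}$ after simplification). Thus each $G$-ball is a Euclidean ball centered at $x$ with radius $\rho^{\,n-\b}$. Plugging this into the definition of $m_G^\delta$, the sum $\sum_j\rho_j$ becomes $\sum_j(\text{radius}_j)^{1/(n-\b)}$ over coverings by Euclidean balls, which upon the standard reparametrization is exactly the $(n-\b)$-dimensional Hausdorff gauge; hence $m_G=m_{t^{n-\b}}$ as stated. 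I would keep this purely at the level of matching gauge functions, not redoing any covering estimates.

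With $m_G=m_{t^{n-\b}}$ established, the theorem is immediate: if $m_{t^{n-\b}}(A)<\infty$ then $m_G(A)<\infty$, so by Corollary \ref{m-semipolar} the set $A$ is semipolar, and since in this setting every semipolar set is polar, $A$ is polar. I would note that one could equally finish through Corollary \ref{m0polar} in the stronger statement, but the claimed conclusion uses only finiteness, so the semipolar-implies-polar fact does the last step.

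The main obstacle is not conceptual but bookkeeping: getting the exponent in the $G$-ball radius exactly right, since $\b-n$ is negative and it is easy to invert the wrong way and land on $\rho^{\b-n}$ instead of $\rho^{\,n-\b}$. Once the radius $\rho^{\,n-\b}$ is pinned down, the equivalence of measures is a one-line reparametrization and the rest is pure citation. I would therefore spend my care precisely on that exponent and on confirming that the paper's own chain of corollaries already delivers semipolar (hence, in this classical case, polar) under a mere finiteness hypothesis.
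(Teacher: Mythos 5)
Your argument is precisely the paper's own: invoke the balayage-space structure and properties (i$'$), (ii), (iii), (iv$'$) from \cite[Section V.4]{BH} together with the fact that every semipolar set is polar in this setting, identify the $G$-balls as Euclidean balls, conclude $m_G=m_{t^{n-\b}}$, and finish with Corollary~\ref{m-semipolar}. One correction on the exponent you yourself flagged as the danger point: from $|x-y|^{\b-n}>\rho\inv$ one gets $|x-y|<\rho^{1/(n-\b)}$, and the further ``simplification'' to $\rho^{n-\b}$ is false; it is the radius $r=\rho^{1/(n-\b)}$, equivalently $\rho=r^{n-\b}$, that turns $\sum_j\rho_j$ into the $(n-\b)$-dimensional gauge, whereas your next sentence (writing $\sum_j(\mathrm{radius}_j)^{1/(n-\b)}$ and calling it the $(n-\b)$-gauge) is internally inconsistent with the radius you just stated. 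The paper's displayed formula for the $G$-ball contains the same misprint, and the identification $m_G=m_{t^{n-\b}}$ that you and the paper both reach is the correct one, so this is a fixable slip rather than a gap.
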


Applying  Corollary \ref{m-semipolar} to classical Green functions $G_D$ on  discs $D$ in $\real^2$,
we obtain the following.

\begin{theorem}\label{class-plane}
Let $A\subset \real^2$ and $\phi(t):=\log^+(1/ t)$, $t>0$.  If $m_\phi(A)<\infty$, then~$A$ is polar
{\rm(}with respect to classical potential theory{\rm)}. 
\end{theorem}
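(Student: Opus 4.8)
The plan is to dominate the intrinsic measure $m_{G_D}$ of the classical Green function $G_D$ on a disc $D$ by the logarithmic gauge measure $m_\phi$, and then to invoke Corollary \ref{m-semipolar}. Since polarity is a local property and countable unions of polar sets are polar, it suffices to prove that $A\cap K$ is polar for every compact $K$ contained in some disc $D$; indeed $\real^2$ is a countable union of such compacts. So I fix such a pair $K\subset D$ and work inside $D$, regarded as a $\mathcal P$-harmonic space on which $G_D$ has the properties (i$'$), (ii), (iii), (iv$'$) of Section \ref{habasp} and on which every semipolar set is polar.

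The analytic input is the expansion $G_D(x,y)=\log(1/|x-y|)+h(x,y)$ with $h$ continuous on $D\times D$, so that $|h|\le C$ on $K\times K$ for some $C\ge 0$ and $G_D(x,y)\ge\log(1/|x-y|)-C$ whenever $x\in K$ and $y$ is close to $x$. Thus small Euclidean balls centred in $K$ are contained in $G_D$-balls. Concretely I would start from a covering of $A\cap K$ by sets $U_j$ of diameter $d_j<\delta$ with $\sum_j\phi(d_j)$ within $\ve$ of $m_\phi^\delta(A\cap K)$, discard the $U_j$ not meeting $A\cap K$, pick $x_j\in U_j\cap K$ (so $U_j$ lies in the closed Euclidean ball of radius $d_j$ about $x_j$), and set $\rho_j:=2/\phi(d_j)=2/\log(1/d_j)$. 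Once $\log(1/d_j)>2C$ one has $1/\rho_j=\tfrac12\log(1/d_j)<\log(1/d_j)-C\le G_D(x_j,y)$ on that Euclidean ball, whence $U_j\subset B(x_j,\rho_j)$ and these $G_D$-balls cover $A\cap K$.

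The key estimate is then $\rho_j=2/\phi(d_j)\le 2\phi(d_j)$, which holds because $\phi(d_j)=\log(1/d_j)\ge 1$ for small $d_j$ forces $1/\phi(d_j)\le\phi(d_j)$. Summing gives $m_{G_D}^{\delta'}(A\cap K)\le\sum_j\rho_j\le 2\sum_j\phi(d_j)$ with $\delta'=2/\log(1/\delta)$, and letting $\delta\to 0$ (hence $\delta'\to 0$) yields $m_{G_D}(A\cap K)\le 2\,m_\phi(A\cap K)\le 2\,m_\phi(A)<\infty$. By Corollary \ref{m-semipolar} applied to $D$ and the kernel $G_D$, the set $A\cap K$ is semipolar, hence polar; reassembling over a countable family of compacts $K$ exhausting $\real^2$ shows that $A$ is polar.

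I expect the principal obstacle to be the interplay between the local and the global: $G_D$ agrees with $\log(1/|x-y|)$ only up to a bounded additive correction $h$, and this correction degenerates as the points approach $\partial D$, so the comparison $m_{G_D}\le 2m_\phi$ cannot be carried out on all of $D$ at once but must be localized to compact $K\subset D$ and then reassembled through the locality of polarity. A secondary point, easy to get backwards, is the direction of the gauge comparison: the genuinely intrinsic gauge for $G_D$ is $1/\log(1/t)$, which for small $t$ lies below $\phi(t)=\log(1/t)$, so finiteness of $m_\phi$ is more than enough to force finiteness of $m_{G_D}$ (and the statement with $\phi$ is correspondingly not the sharpest possible).
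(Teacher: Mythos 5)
Your proposal is correct and follows essentially the same route as the paper, whose entire proof is the one-line instruction to apply Corollary \ref{m-semipolar} to the classical Green functions $G_D$ on discs: you supply the missing details (localization to compacts $K\subset D$, the comparison of Euclidean balls with $G_D$-balls via $G_D(x,y)=\log(1/|x-y|)+h(x,y)$ with $h$ bounded near $K$, the passage from semipolar to polar, and reassembly through the locality and countable stability of polarity) correctly. Your closing remark about the gauge is also on target and worth recording: the intrinsic gauge for $G_D$ is $\bigl(\log^+(1/t)\bigr)^{-1}$ (which is the gauge in the cited Armitage--Gardiner result, and is surely what the statement intends, since with the decreasing gauge $\phi(t)=\log^+(1/t)$ every nonempty set has infinite $m_\phi$-measure), and your choice $\rho_j=2/\log(1/d_j)$ in fact proves the theorem with that sharper gauge, the extra step $2/\phi(d_j)\le 2\phi(d_j)$ serving only to match the gauge as printed.
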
 

In particular, we have recovered  known results dealing with classical or Riesz potential
theory; see \cite[Theorem 5.9.4]{gardiner-armitage}, 
\cite[Theorem IV.1]{carleson}, \cite[Theorem 3.14]{landkof}.

\subsection{Heat equation}

In this subsection we consider the harmonic space $(\real^{n+1}, \H)$ 
associated with the heat equation
with Green function $G$ defined by (\ref{def-W}) and (\ref{G-heat}). Then $G$ obviously has  the  properties 
(i) -- (iii)  from Section \ref{G-section}, and property (iv$'$) follows from \cite[Corollary~6.39 and Theorem 6.34]{watson-book}. 

Let $x'\in \real^{n+1}$, $\rho>0$.  Then $ B(x',\rho)=x'+B(0,\rho)$, where 
 $B(0,\rho)$ denotes the \emph{heat ball 
 with center $0$ and radius $ \rho^{2/n}$}, that is,
\begin{equation}\label{B-form}
   B(0,\rho):=  \left\{(y,-s)\colon y\in \realn,\,   0<s<\rho^{2/n},\,
       |y|^2<  2ns \log \frac {\rho^{2/n}}  s\right\};
\end{equation} 
in particular,  $B(0,\rho)$ is convex and contained in the  cylinder
\begin{equation*} 
              \bigl\{(y,-s)\colon y\in \realn,\, 0<s< \rho^{2/n}, \, |y|< n^{1/2} \rho^{1/n} \bigr\};          
\end{equation*} 
see \cite[page 2]{watson-book}. 
Hence, by our definition  (\ref{def-P}),
\begin{equation}\label{B-P}
  B(x',\rho)\subset  P(x', 2n^{1/2} \rho^{1/n} ).
\end{equation} 
Further, we claim that, defining $z':=(0,\rho^2)$, 
\begin{equation}\label{suff-P-B}
  P(-z',\rho)\subset B(0, 2^{n/2}\rho^n)
\end{equation} 
and hence, by translation invariance,
\begin{equation}\label{P-B}
P(x',\rho)\subset B(x'+z', 2^{n/2}\rho^n).
\end{equation} 
 To prove (\ref{suff-P-B}) we only have to show
that every vertex of $P(-z',\rho)$ is
contained in the convex set $ B(0,2^{n/2}\rho^n)$ which, by (\ref{B-form}),
is the set of all $(y,-s)\in \real^{n+1}$ such that 
\begin{equation}\label{B-est}
 0<s<2\rho^2 \und 
  |y|^2<2ns\log\frac{2\rho^2}s.
\end{equation} 
Such a vertex has the form $(y, -s)$ with $s=(k/2)\rho^2 $,
$k\in\{1,3\}$ and $|y|^2=n(\rho/2)^2$, hence it is contained in $ B(0,2^{n/2}\rho^n)$
if
\begin{equation*}
           \frac 14 < 2 \frac k2 \log \frac 4k, \qquad k\in\{1,3\}.
\end{equation*} 
This holds, of course, for $k=1$. It also holds for $k=3$ since already
$4\log (4/3)=\log ( 256/81)>$1. So (\ref{P-B}) holds.

\begin{proposition}\label{mGnP}
For every subset $A$ of $\real^{n+1} $,
\begin{equation*} 
m_{n,P}(A)\le 2^n n^{n/2} m_G(A) \und           m_G(A)\le 2^{n/2} m_{n,P}(A). 
\end{equation*} 
\end{proposition}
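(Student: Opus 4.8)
The plan is to read off both inequalities as immediate consequences of the general comparison result, Proposition \ref{general-comparison}, once I recognize the two measures as instances of the abstract family $m_{\eta,F}$. Indeed, the intrinsic measure is $m_G=m_{1,B}$ (exponent $\eta=1$, with $F=B$ the $G$-balls), while the parabolic measure is exactly $m_{n,P}$ (exponent $n$, $F=P$). The two geometric inclusions (\ref{B-P}) and (\ref{P-B}) that have already been established are precisely what Proposition \ref{general-comparison} requires; everything reduces to matching the power $\rho^{\eta/\tilde\eta}$ appearing in its hypothesis.

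For the first inequality I would apply Proposition \ref{general-comparison} with $\eta=1$, $\tilde\eta=n$, $F=B$ and $\tilde F=P$. Its hypothesis asks, for each $x$ and $\rho>0$, for some $z$ with $B(x,\rho)\subset P(z,\kappa\rho^{1/n})$ (note $\eta/\tilde\eta=1/n$). This is precisely (\ref{B-P}), taking $z=x$ and $\kappa=2n^{1/2}$. The conclusion $m_{\tilde\eta,\tilde F}\le\kappa^{\tilde\eta}m_{\eta,F}$ then reads $m_{n,P}\le(2n^{1/2})^n m_G=2^n n^{n/2}m_G$, as desired.

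For the second inequality I would invoke the same proposition with the roles reversed: $\eta=n$, $\tilde\eta=1$, $F=P$ and $\tilde F=B$. Now the hypothesis requires $P(x,\rho)\subset B(z,\kappa\rho^n)$ (since $\eta/\tilde\eta=n$), which is exactly (\ref{P-B}) with $z=x+z'$, $z'=(0,\rho^2)$, and $\kappa=2^{n/2}$. The conclusion gives $m_G=m_{1,B}\le\kappa^1 m_{n,P}=2^{n/2}m_{n,P}$.

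Since the two inclusions are already in hand, there is essentially no obstacle beyond bookkeeping: the one thing to watch is which exponent plays the role of $\eta$ and which of $\tilde\eta$, and to check that the powers of $\rho$ in (\ref{B-P}) and (\ref{P-B})---namely $\rho^{1/n}$ and $\rho^n$---agree with the required $\rho^{\eta/\tilde\eta}$ in each direction. They do, so each inequality is a single application of Proposition \ref{general-comparison}.
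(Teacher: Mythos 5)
Your proposal is correct and is essentially identical to the paper's own proof: both inequalities follow from the inclusions (\ref{B-P}) and (\ref{P-B}) via two applications of Proposition \ref{general-comparison}, with exactly the parameter choices ($\eta$, $\tilde\eta$, $\kappa$) you specify. The bookkeeping of exponents and constants checks out in both directions.
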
 

\begin{proof}  By our definitions, $m_G=m_{1,B}$; see Section \ref{sec-comp}.
Hence the first
  inequality follows from  (\ref{B-P}) and 
 Proposition \ref{general-comparison} 
  with $F=B$ and  $\eta=1$,  $\tilde F=P$ and $\tilde \eta=n$, $\kappa=
  2n^{1/2}$. 

The second inequality follows from  (\ref{P-B}) and 
 Proposition \ref{general-comparison} 
  with $ F=P$ and $ \eta=n$, $\tilde F=B$ and  $\tilde \eta=1$,
  $\kappa=2^{n/2}  $. 
\end{proof}

Let us next observe that, for $m_P$ from Section \ref{intro} and $m_{n,P}$ we have
\begin{equation}\label{PnP}
                   m_P=n^{n/2} m_{n,P}.
\end{equation} 
Indeed, if $\delta>0$, then 
$
                 \sqrt n \rho\le \diam P(x',\rho)\le \sqrt n \rho(1+\delta)
$
for all $x'\in \real^{n+1}$ and $0<\rho<\delta$, and hence 
\begin{equation*}
                              n^{n/2} m_{n,P}^\delta\le m_P^\delta\le
                              (1+\delta)^n n^{n/2} m_{n,P}^\delta,
\end{equation*} 
which clearly implies (\ref{PnP}). So we may state Proposition
\ref{mGnP} in the following way.

\begin{corollary}\label{PGP}
 For every subset $A$ of $\real^{n+1} $,
\begin{equation*} 
             (2n)^{-n}  m_P(A)\le m_G(A)\le (2/n)^{n/2} m_P(A). 
\end{equation*} 
In particular, $(1/2) m_P\le m_G\le \sqrt2\, m_P$, if $n=1$.
\end{corollary}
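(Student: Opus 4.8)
The plan is to obtain both inequalities as a purely algebraic consequence of Proposition \ref{mGnP} together with the scaling identity (\ref{PnP}); no further geometry or measure theory is required, since the substantive work (the inclusions (\ref{B-P}) and (\ref{P-B}) fed into the comparison Proposition \ref{general-comparison}) is already packaged into Proposition \ref{mGnP}. The single preliminary step is to rewrite (\ref{PnP}) in the form $m_{n,P}(A)=n^{-n/2}m_P(A)$, valid for every $A\subset\real^{n+1}$.

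For the lower estimate on $m_G$ I would start from the first inequality of Proposition \ref{mGnP}, namely $m_{n,P}(A)\le 2^n n^{n/2} m_G(A)$, and substitute $m_{n,P}(A)=n^{-n/2}m_P(A)$. Collecting the powers of $n$ on the right via $2^n n^{n/2}\cdot n^{n/2}=(2n)^n$ yields $(2n)^{-n}m_P(A)\le m_G(A)$. For the upper estimate I would start from the second inequality $m_G(A)\le 2^{n/2}m_{n,P}(A)$ and substitute again, obtaining $m_G(A)\le 2^{n/2}n^{-n/2}m_P(A)=(2/n)^{n/2}m_P(A)$.

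The stated consequence for $n=1$ is then immediate: with $n=1$ the factors become $(2n)^{-n}=1/2$ and $(2/n)^{n/2}=\sqrt2$, giving $(1/2)m_P\le m_G\le\sqrt2\,m_P$. I do not expect any genuine obstacle here; the only point requiring attention is the bookkeeping of the exponents of $2$ and $n$ when combining the two sources, and the observation that the identity (\ref{PnP}) holds on arbitrary sets, so that the substitution is legitimate for the general $A$ in the statement.
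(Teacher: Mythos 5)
Your proposal is correct and is exactly the paper's own argument: the corollary is obtained by substituting the identity $m_{n,P}=n^{-n/2}m_P$ from (\ref{PnP}) into the two inequalities of Proposition \ref{mGnP}, and the exponent bookkeeping ($2^n n^{n/2}\cdot n^{n/2}=(2n)^n$ and $2^{n/2}n^{-n/2}=(2/n)^{n/2}$) checks out. Nothing further is needed.
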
 

So Corollaries  \ref{m-semipolar} and   \ref{m0polar}
yield the following, where the first statement is the positive answer to the question raised in \cite[page 330]{TW}
and the second one is \cite[Theorem 1]{TW}.   

\begin{theorem}\label{heat-theorem}
Let $A$ be a subset of $X$.
\begin{itemize}
\item [\rm 1.] If  $m_P(A)<\infty$, then $A$ is semipolar.
\item [\rm 2.] If  $m_P(A)=0$, then $A$ is polar.
\end{itemize} 
\end{theorem}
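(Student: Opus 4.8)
The plan is to deduce both assertions directly from the intrinsic-measure machinery of Sections \ref{G-section} and \ref{habasp}, transporting the hypotheses from $m_P$ to $m_G$ by means of the geometric comparison already recorded in Corollary \ref{PGP}. Nothing new needs to be proved; the content is entirely in checking that the abstract hypotheses are in force and that the comparison inequality points in the direction we need.

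First I would confirm that the standing assumptions of the earlier results are available here. In the heat-equation setting $(\real^{n+1},\H)$ is a harmonic space in which the constant $1$ is superharmonic, and the Green function $G$ of \eqref{def-W}--\eqref{G-heat} has properties (i)--(iii) (observed at the start of the subsection) together with property (iv$'$), which follows from \cite[Corollary 6.39 and Theorem 6.34]{watson-book}. Moreover condition ($\ast$) is automatic on a harmonic space, as noted in Section \ref{habasp}. Hence both Corollary \ref{m-semipolar} (finiteness of $m_G$ implies semipolarity) and Corollary \ref{m0polar} (vanishing of $m_G$ implies polarity) are applicable to every subset of $\real^{n+1}$.

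For statement 1, I would invoke the right-hand inequality of Corollary \ref{PGP}, namely $m_G(A)\le (2/n)^{n/2}\,m_P(A)$. If $m_P(A)<\infty$, this yields $m_G(A)<\infty$, and Corollary \ref{m-semipolar} gives that $A$ is semipolar. For statement 2, the same inequality shows that $m_P(A)=0$ forces $m_G(A)=0$, and then Corollary \ref{m0polar} gives that $A$ is polar.

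The one point genuinely worth verifying is that the comparison constant runs the right way: it is precisely the bound $m_G\le c\,m_P$, and not the reverse, that lets smallness of $m_P$ propagate to smallness of $m_G$. The other inequality of Corollary \ref{PGP} is not used in either statement. Once this direction is confirmed there is no real obstacle, since all the difficulty has already been absorbed into the proof of Theorem \ref{main-result} (feeding Corollary \ref{m-semipolar}) and into the explicit inclusions of heat balls in parabolic cubes \eqref{B-P}--\eqref{P-B} that yield Corollary \ref{PGP}.
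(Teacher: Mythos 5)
Your proposal is correct and follows exactly the paper's own route: the paper likewise derives both statements by combining the inequality $m_G\le (2/n)^{n/2}m_P$ from Corollary \ref{PGP} with Corollaries \ref{m-semipolar} and \ref{m0polar}, after noting that $G$ has properties (i)--(iii) and (iv$'$) and that ($\ast$) holds automatically for harmonic spaces. Your observation that only the right-hand inequality of Corollary \ref{PGP} is needed is accurate.
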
 

Let us note that an analogous result to that in 
Theorem \ref{heat-theorem}.2 was established in       
\cite[Theorem 3]{mysovskikh}, where a modified heat kernel is investigated. 

Now let us suppose that $n=1$ and show that subsets of vertical lines in $\real^2$
are polar if they are semipolar (as we already noticed, this is not true for subsets 
of horizontal lines). 

\begin{proposition}\label{L}
Every semipolar set $A$ in $ \{0\}\times \real$ is polar.
\end{proposition}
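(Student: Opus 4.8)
The plan is to reduce the two–dimensional problem to a one–dimensional potential theory on the time axis and to compare it with the symmetric Riesz kernel of order $1/2$ treated in Section~\ref{standard}. Throughout $n=1$, and I identify a point $(0,\tau)$ of the line $\{0\}\times\real$ with $\tau\in\real$; for a measure $\mu$ on $\{0\}\times\real$ I write $\bar\mu$ for the corresponding measure on $\real$. Since subsets of polar sets are polar and every semipolar set is contained in a Borel semipolar set, I may assume $A$ is Borel; the heat equation being a harmonic space with Green function $G$, all hypotheses of Section~\ref{habasp}, including ($\ast$), hold, so Propositions~\ref{G-semipolar-semipolar} and~\ref{G-polar-polar} apply.

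First I would compute the restriction of $G$ to the line. By (\ref{def-W}) and (\ref{G-heat}), for $x'=(x,t)$ and $\tau\in\real$,
\[
 G\bigl((x,t),(0,\tau)\bigr)=1_{\{\tau<t\}}\,(t-\tau)^{-1/2}\exp\Bigl(-\tfrac{x^2}{4(t-\tau)}\Bigr),
\]
so on the line one obtains the \emph{one–sided Riesz kernel} $N(t,\tau):=1_{\{\tau<t\}}(t-\tau)^{-1/2}$, and for every measure $\mu$ on $\{0\}\times\real$ one has $G\mu(0,t)=N\bar\mu(t):=\int 1_{\{\tau<t\}}(t-\tau)^{-1/2}\,d\bar\mu(\tau)$ together with $G\mu(x,t)\le N\bar\mu(t)$ (the Gaussian factor being $\le 1$). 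The key point is that this sandwich, combined with the lower semicontinuity of $G\mu$ (property (i) and Fatou), makes continuity of $G\mu$ equivalent to continuity of $N\bar\mu$: at a line point $(0,t_0)$, as soon as $N\bar\mu$ is continuous, $\limsup_{(x,t)\to(0,t_0)}G\mu(x,t)\le N\bar\mu(t_0)=G\mu(0,t_0)\le\liminf_{(x,t)\to(0,t_0)}G\mu(x,t)$, while continuity off the line is routine (dominated convergence using (ii) and $G\mu\le N\bar\mu<\infty$). Hence $G\mu$ is continuous and real on $\real^2$ if $N\bar\mu$ is so on $\real$, and $G\mu$ is bounded iff $N\bar\mu$ is bounded. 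Writing $E:=\{\tau:(0,\tau)\in A\}$, this yields: if $A$ is $G$-semipolar then $E$ is $N$-semipolar, and if $E$ is $N$-polar then $A$ is $G$-polar (with the evident meaning of $N$-semipolar, $N$-polar for measures on $E$).

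It then remains to show that every $N$-semipolar set $E\subset\real$ is $N$-polar, and here I would introduce the symmetric kernel $M(t,\tau):=|t-\tau|^{-1/2}=N(t,\tau)+\tilde N(t,\tau)$ with $\tilde N(t,\tau)=1_{\{t<\tau\}}(\tau-t)^{-1/2}$; by Section~\ref{standard} (the Riesz kernel $|t-\tau|^{\beta-n}$ with $n=1$, $\beta=1/2$) every $M$-semipolar set is $M$-polar. For the semipolar direction, if $M\bar\mu$ is continuous and real then $N\bar\mu=M\bar\mu-\tilde N\bar\mu$ is upper semicontinuous (continuous minus l.s.c.) as well as l.s.c., hence continuous; so $N$-semipolar $\Rightarrow$ $M$-semipolar $\Rightarrow$ $M$-polar. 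For the polar direction I would use the energy identity $\int N\bar\mu\,d\bar\mu=\tfrac12\int M\bar\mu\,d\bar\mu$ (obtained by interchanging the two integration variables): if $E$ is not $N$-polar, a compactly supported $\bar\mu\ne 0$ on $E$ with $N\bar\mu$ bounded has finite $N$-energy, hence finite $M$-energy; restricting $\bar\mu$ to $\{M\bar\mu\le\lambda\}$ for large $\lambda$ and invoking the domination principle for the symmetric Riesz kernel produces a nonzero measure with bounded $M$-potential, so $E$ is not $M$-polar. Thus $M$-polar $\Rightarrow$ $N$-polar, and combining the two directions gives $N$-semipolar $\Rightarrow$ $M$-polar $\Rightarrow$ $N$-polar.

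Assembling everything: $A$ semipolar $\Rightarrow$ $A$ $G$-semipolar (Proposition~\ref{G-semipolar-semipolar}) $\Rightarrow$ $E$ $N$-semipolar $\Rightarrow$ $E$ $N$-polar $\Rightarrow$ $A$ $G$-polar $\Rightarrow$ $A$ polar (Proposition~\ref{G-polar-polar}). The main obstacle is the one–sidedness of $N$: it destroys the naive continuity principle (for instance $N\delta_0$ is unbounded although its restriction to the support vanishes), so one cannot argue directly and must route both the continuity transfer and the polarity comparison through the symmetric kernel $M$ — the continuity transfer via the sandwich $G\mu\le N\bar\mu=G\mu(0,\cdot)$ and the decomposition $N=M-\tilde N$, and the polarity comparison via the energy identity $2\,\mathcal E_N=\mathcal E_M$.
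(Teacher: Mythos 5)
Your argument is correct in substance, but it follows a genuinely different route from the paper's. The paper stays in $\real^2$: it takes the thermal capacitary distribution $\nu$ of a hypothetical non-polar compact $K\subset A$ (so $\nu\ne 0$, $\supp\nu\subset K$, $G\nu\le 1$), uses Lusin's theorem to extract $\tilde\nu=1_{\tilde K}\nu\ne0$ whose potential restricted to $\tilde K$ is continuous, and then invokes Dont's continuity principle for heat potentials to upgrade this to continuity of $G\tilde\nu$ on all of $\real^2$, contradicting semipolarity. You instead restrict $G$ to the vertical line, obtaining the one-sided kernel $N(t,\tau)=1_{\{\tau<t\}}(t-\tau)^{-1/2}$, transfer continuity and boundedness between $G\mu$ and $N\bar\mu$ via the sandwich $G\mu\le N\bar\mu=G\mu(0,\cdot)$ and lower semicontinuity, and then compare $N$ with the symmetric $\tfrac12$-Riesz kernel $M$: the decomposition $N=M-\tilde N$ handles the semipolar direction, and the energy identity $2\,\mathcal E_N=\mathcal E_M$ plus Frostman's maximum principle handles the polar direction, after which the classical fact that semipolar $=$ polar for Riesz kernels (Section~\ref{standard}) closes the loop. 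What the paper's route buys is brevity, at the cost of relying on the nontrivial continuity principle for heat potentials; what your route buys is an explicit identification of polarity on the line with $\tfrac12$-Riesz polarity (which the paper only mentions as a remark, citing Kaufman--Wu), at the cost of more moving parts. Two details you should nail down: (a) in the restriction step, note that $\{M\bar\nu\le\lambda\}$ is closed (complement of the open set $\{M\bar\nu>\lambda\}$) and contains $\{M\bar\mu\le\lambda\}$, so the maximum principle really applies on $\supp\bar\nu$; (b) the implication ``$E$ polar for the $\tfrac12$-Riesz balayage space $\Rightarrow$ $E$ is $M$-polar'' is not literally Proposition~\ref{G-polar-polar} (which goes the other way), though it follows from the classical fact that a compactly supported measure with bounded Riesz potential has finite energy and therefore cannot charge a set of zero capacity. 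Also observe that the energy identity is legitimate only because boundedness of $N\bar\mu$ forces $\bar\mu$ to be non-atomic, since $M$ is $+\infty$ on the diagonal while $N$ vanishes there; you implicitly use this and it deserves a sentence.
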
 

\begin{proof} Let $A$ be a semipolar set in $\{0\}\times \real$.
By \cite[VI.5.7.3]{BH}, we may suppose that~$A$ is a~Borel set.
We claim that every compact $K$ in $A$ is polar, and hence $A$
is polar, by~\cite[VI.5.5]{BH}.
 
To that end suppose that $A$ contains a compact $K$ which is not polar,
and let~$\nu$ be the thermal capacitary distribution of $K$
; see \cite[Definition 7.33]{watson-book}.
Then $\nu\ne 0$,  $\nu$ is supported by $K$, and $G\nu\le 1$. By
Lusin's theorem, there exists a compact~$\tilde K$ in~$K$ such that 
$\tilde \nu:=1_{\tilde K}\nu\ne 0$ and the restriction of $G\nu$ to $\tilde K$ is continuous.
Since both~$G\tilde \nu$ and~$G(\nu-\tilde \nu)$ are lower semicontinuous, 
we see that also the restriction of~$G\tilde \nu$ to~$\tilde K$ is continuous.
By \cite[Theorem 5]{Dont}, this implies that  $G\tilde \nu$
is continuous on~$\real^2$. So $\tilde K$ is not semipolar, a contradiction.
\end{proof}

Let us remark that a set in $\{0\}\times \real$ is polar if and only if it has zero Riesz
$\frac 12$-capacity; see \cite[Theorem 2]{KW}. 

\section{Application to  space-time processes}\label{space-time}

In this section we shall  consider a space-time setting which is more general 
than for the heat equation and which will be discussed in the situation of heat semigroups
on metric measure spaces in Section \ref{section-metric}.

As in \cite[Sections 7 and 8]{dense-para} we assume the following.
Let  $X\ne\emptyset$  
be a locally compact space with countable base, 
$X':=X\times \real$, and let $\B^+(X)$, $\B^+(X')$ denote the set of all Borel measurable positive numerical functions
on $X$, $X'$ respectively.
We suppose that we have a~measure~$\mu$ on~$X$, not charging points,
and a~strictly positive continuous real function 
$(x,y,t)\mapsto p_t(x,y)$ on 
$X\times X\times (0,\infty)$ satisfying the Chapman-Kolmogorov equations:
\begin{itemize} 
\item[\rm(CK)]
 For all $s,t\in (0,\infty)$ and $ x,y\in X$,
$$ 
 p_{s+t}(x,y)=\int p_{s}(x,z)p_{t}(z,y)\,d\mu(z).
$$
\end{itemize} 
For $t>0$, $f\in \B^+(X)$ and $x\in X$, let 
\begin{equation*}  
  P_tf(x) := \int p_t(x,y)f(y)\,d\mu(y). 
\end{equation*} 
Then $\mathbbm P:=(P_t)_{t>0}$ 
 is a  semigroup on $X$. 
Let   $ E_{\mathbbm P}$
denote the corresponding cone of excessive functions, that is,
 \begin{equation*}
  E_{\mathbbm P}:=\{u\in \B^+(X)\colon \sup_{t>0}P_tu=u\}. 
 \end{equation*} 

We suppose that, in addition, the following holds.
\begin{itemize}
\item[\rm(E)] 
$1\in E_{\mathbbm P}$. 
\item[\rm(C)]
For all $x_0,y_0\in X$,  
\begin{equation*}
\limsup_{(x,t)\to (x_0,0)} p_t(x, x_0) =\infty \und  \lim_{(x,y,t)\to (x_0,y_0,0)} p_t(x,y)=0, \quad\mbox{ if }x_0\ne y_0.
\end{equation*} 
\item[\rm(KL)] 
For all compacts $K$ in $X$, $T>0$ and $\ve>0$, there exists a compact $L$ in $X$
such that 
\begin{equation*} 
  \|1_{X\setminus L}\,  p_t(\cdot,y)\|_{L^1(\mu)} +  \|1_{X\setminus L}\,  p_t(\cdot,y)\|_\infty <\ve
  \quad\mbox{ for all }(y,t)\in K\times (0,T].
\end{equation*} 
\end{itemize} 

In \cite{dense-para} we were supposing that also the  dual function $(x,y,t)\mapsto p_t(y,x)$ 
has the properties above. However, since this played a role only starting with \cite[Lemma~8.5]{dense-para},
we shall not need that here.

Let $\mathbbm T:=(T_t)_{t>0}$ denote the 
semigroup of uniform translation to the left, that is,
$T_t(r,\cdot):=\ve_{r-t}$. We define
\begin{equation*} 
\mathbbm P':=\mathbbm P\otimes \mathbbm T
\end{equation*} 
which means that $\mathbbm P'=(P_t')_{t>0}$, where, for all $t>0$,  $f\in \B^+(X')$ and $(x,r)\in X'$,
\begin{equation*} 
       P_t'f(x,r):=P_tf(\cdot, r-t)(x).
\end{equation*} 
Clearly, $\mathbbm P'$ is a semigroup on $X'$. Let $E_{\mathbbm P'}$ denote its cone of excessive functions.
Then, by \cite[Proposition 8.1]{dense-para} and (E),  the following holds; cf.\ also \cite[Section V.5]{BH}.

\begin{proposition}\label{bal-space}
$(X',E_{\mathbbm P'})$ is a  balayage space, $1\in E_{\mathbbm P'}$. 
\end{proposition}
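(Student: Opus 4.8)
The plan is to deduce the balayage-space property of $(X',E_{\mathbbm P'})$ from the general correspondence between sub-Markov semigroups and balayage spaces developed in \cite[Section V.5]{BH}, applied to the product semigroup $\mathbbm P'=\mathbbm P\otimes\mathbbm T$. Concretely, I would show that $\mathbbm P'$ is a semigroup of kernels on $X'$ whose excessive cone is lower semicontinuous, linearly separating, and proper, which are exactly the hypotheses under which $E_{\mathbbm P'}$ forms a balayage space. The assertion $1\in E_{\mathbbm P'}$ is the easiest point and I would settle it first: since the constant function $1$ on $X'$ has all its time-slices equal to the constant $1$ on $X$, one has $P_t'1(x,r)=P_t1(x)$, so hypothesis (E) gives $\sup_{t>0}P_t'1=\sup_{t>0}P_t1=1$, i.e. $1\in E_{\mathbbm P'}$.

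Next I would identify the potential kernel of $\mathbbm P'$. Writing $G'f(x,r)=\int_0^\infty P_t'f(x,r)\,dt$ and substituting $s=r-t$ shows that $\mathbbm P'$ carries the space-time Green function
\begin{equation*}
G'\bigl((x,r),(y,s)\bigr)=1_{(0,\infty)}(r-s)\,p_{r-s}(x,y),
\end{equation*}
so that potentials of measures on $X'$ are genuine space-time integrals against the heat kernel. The translation factor $\mathbbm T$ is harmless, being a deterministic semigroup of kernels with trivial potential theory, so all analytic content sits in the $X$-variable through $p_t$. In particular, properness (transience) is immediate: for $f\ge0$ bounded with compact support the inner spatial integral $\int p_{r-s}(x,y)f(y,s)\,d\mu(y)\le\|f\|_\infty P_{r-s}1(x)\le\|f\|_\infty$ is uniformly bounded, and the outer integral runs over a bounded time window, so $G'f$ is finite.

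The substance of the proof is to check the regularity axioms, and these are precisely what conditions (C) and (KL) supply. I would establish: (a) excessive functions of $\mathbbm P'$, and in particular the potentials $G'\nu$, are lower semicontinuous, using the continuity of $(x,y,t)\mapsto p_t(x,y)$ for $t>0$ from (C) together with Fatou's lemma (which makes each $P_t'$ carry lower semicontinuous functions to lower semicontinuous functions, and excessive functions are increasing limits of such); (b) $E_{\mathbbm P'}$ is linearly separating, which follows from the two limiting relations in (C) — the diagonal blow-up $\limsup_{(x,t)\to(x_0,0)}p_t(x,x_0)=\infty$ forces distinct point potentials, while $p_t(x,y)\to0$ off the diagonal separates distinct space points; and (c) the space-time analogue of property (iii) of Section \ref{G-section}, namely boundedness of $G'$ on $(X'\setminus L')\times K'$, which is exactly the $L^1$- and $L^\infty$-tightness at spatial infinity provided by (KL). Each of these reduces, via the product structure, to the corresponding statement for $\mathbbm P$ on $X$.

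The main obstacle I expect is verifying the finer convergence axioms that distinguish a balayage space from a mere potential-theoretic cone — in particular the approximation of excessive functions by continuous potentials and the lower semicontinuity of reduced functions — in the presence of the discontinuity of $G'$ across the time-diagonal $\{r=s\}$ and the singular behaviour of $p_t$ as $t\downarrow0$, which (C) controls only in a limsup/liminf sense. The delicate step is thus to argue that $G'$, despite this discontinuity, still generates a well-behaved lower-semicontinuous potential cone rich in continuous potentials. Once this regularity is secured, the full set of balayage-space axioms follows from the general machinery of \cite[Section V.5]{BH}, which is precisely the content packaged in \cite[Proposition 8.1]{dense-para}.
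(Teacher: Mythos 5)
Your proposal is correct and follows essentially the same route as the paper: the paper also disposes of $1\in E_{\mathbbm P'}$ via (E) and then simply invokes \cite[Proposition 8.1]{dense-para} (with the cross-reference to \cite[Section V.5]{BH}) for the balayage-space property, which is exactly where your sketch of the axiom verification ultimately lands. The extra detail you supply (properness of the potential kernel, lower semicontinuity, separation) is a reasonable unpacking of what that citation contains, not a different argument.
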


We define $G'\colon X'\times X'\to [0,\infty)$ by
\begin{equation*} 
 G'((x,r),(y,s)):=\begin{cases}  p_{r-s}(x,y), &\quad\mbox{ if } r>s,\\
                                                           0, &\quad\mbox{ if } r\le s.
                       \end{cases}
\end{equation*} 

\begin{theorem}\label{G-para}
The function $G'$ has the properties {\rm(i$'$), (ii), (iii)} and {\rm(iv$'$)} of Section \ref{habasp}. 
\end{theorem}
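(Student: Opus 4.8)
The plan is to identify $G'$ as the $\mu'$-density of the potential kernel of $\mathbbm P'$, where $\mu':=\mu\otimes\lambda$ with $\lambda$ Lebesgue measure on $\real$, and to read off (i$'$), (ii), (iii) from the Chapman--Kolmogorov equations together with (C), (E) and (KL); the substantial point is (iv$'$), which I would obtain from the Riesz representation available in the balayage space of Proposition \ref{bal-space}. First I would compute, for $f\in\B^+(X')$ and $(x,r)\in X'$, using the substitution $s=r-t$,
\[
V'f(x,r):=\intoi P_t'f(x,r)\,dt=\int_{-\infty}^r\!\!\int p_{r-s}(x,y)\,f(y,s)\,d\mu(y)\,ds=\int G'\bigl((x,r),\cdot\bigr)\,f\,d\mu'.
\]
Thus $G'$ is the $\mu'$-density of $V'$, and since (E) gives $P_t1\le1$, one has $V'f(x,r)\le(b-a)\,\|f\|_\infty$ whenever $\supp f\subset X\times[a,b]$, so $V'$ is a proper (transient) kernel.

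For (i$'$), fix $y'=(y,s)$. Chapman--Kolmogorov yields
\[
P_t'\bigl[G'(\cdot,y')\bigr](x,r)=\int p_t(x,z)\,p_{r-t-s}(z,y)\,1_{\{r-t>s\}}\,d\mu(z)=p_{r-s}(x,y)\,1_{\{r-t>s\}},
\]
which increases to $G'((x,r),y')$ as $t\downarrow0$ and decreases to $0$ as $t\to\infty$; hence $G'(\cdot,y')\in E_{\mathbbm P'}$ is a potential, its greatest harmonic minorant being $\lim_{t\to\infty}P_t'[G'(\cdot,y')]=0$. The same identity shows $G'(\cdot,y')$ is harmonic off its pole, so its superharmonic support is $\{y'\}$ (as in \cite{dense-para}); and letting $(x,r)\to(y,s)$ with $r\downarrow s$ turns $\limsup_{x'\to y'}G'(x',y')$ into $\limsup_{(x,t)\to(y,0)}p_t(x,y)=\infty$ by (C). For (ii), $G'=p_{r-s}(x,y)$ is continuous on the open set $\{r>s\}$ (continuity of $p_t$ for $t>0$), vanishes on the open set $\{r<s\}$, and tends to $0$ as one approaches a point $((x,r),(y,r))$ with $x\ne y$, by the second part of (C); since $G'$ is Borel throughout, (ii) holds.

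For (iii), given a compact $K'\subset K\times[a,b]$ I would apply (KL) to $K$, some $T>0$ and $\ve=1$ to obtain a compact $L\subset X$ with $p_t(x,y)<1$ for $x\notin L$, $y\in K$, $0<t\le T$, and note $M:=\sup_{z\in X,\,y\in K}p_T(z,y)<\infty$ (continuity on $L\times K$ plus (KL) off $L$). Chapman--Kolmogorov together with $P_{t-T}1\le1$ then gives $p_t(x,y)\le M$ for all $t\ge T$, $y\in K$, $x\in X$. Putting $L':=L\times[a-1,b+T]$, any $(x,r)\notin L'$ satisfies one of: $r<a-1$, so $r<s$ and $G'=0$; or $r>b+T$, so $r-s>T$ and $G'\le M$; or $x\notin L$ with $r\in[a-1,b+T]$, so $G'<1$ when $r-s\le T$ and $G'\le M$ otherwise. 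Hence $G'\le\max\{1,M\}$ on $(X'\setminus L')\times K'$.

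The main obstacle is (iv$'$). Here I would use that $(X',E_{\mathbbm P'})$ is a balayage space (Proposition \ref{bal-space}) whose Green function is precisely the density $G'$ of the proper potential kernel $V'$. By the Riesz decomposition in this setting (\cite{BH}, with the space-time identification of \cite{dense-para}), every potential $p$ is of the form $G'\nu=\int G'(\cdot,y')\,d\nu(y')$ for a unique Radon measure $\nu$ whose support equals the superharmonic support of $p$. If $p$ is a bounded potential with compact superharmonic support $S$, then $\supp\nu\subset S$ is compact, so $\nu\in\M(X')$ and $p=G'\nu$, giving (iv$'$). The delicate steps are checking that $G'$ is exactly the Green function supplied by the balayage-space structure and that the representing measure is carried by the superharmonic support; this is where the regularity of $G'$ from (i$'$)--(ii) and the results of \cite{dense-para} are needed.
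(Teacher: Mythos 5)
Your treatment of (ii) and (iii) follows the paper's route: (ii) is read off from (C) and the continuity of $p_t$, and (iii) is the same Chapman--Kolmogorov bootstrap (use (KL) to bound $p_t$ for $0<t\le T$ off a compact $L$, bound $p_T$ on $X\times K$ by compactness, then propagate to all $t>T$ via $p_t(x,y)=\int p_{t-T}(x,z)p_T(z,y)\,d\mu(z)\le a\,P_{t-T}1(x)\le a$); your bookkeeping with the time window $[a-1,b+T]$ is, if anything, slightly more careful than the paper's. For (i$'$) you compute $P_t'\bigl[G'(\cdot,y')\bigr]$ directly where the paper simply cites \cite[Proposition 8.4]{dense-para}; that is a reasonable sketch, though note that $\lim_{t\to\infty}P_t'u=0$ is not by itself the definition of ``potential'' in a balayage space, so one more line is owed there.

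The genuine gap is (iv$'$). You assert that ``by the Riesz decomposition in this setting'' every potential equals $G'\nu$ for a Radon measure $\nu$ carried by its superharmonic support, and you present the remaining work as merely checking that $G'$ is ``the'' Green function of the balayage space. No such representation theorem is available off the shelf in \cite{BH}: what \cite[II.3.11]{BH} provides is only $q=\sup_n V'f_n=\sup_n G'(f_n m)$ with $m=\mu\otimes\lambda_\real$, i.e.\ an increasing limit of potentials of \emph{absolutely continuous} measures, and passing from this to $q=G'\mu'$ for an actual measure $\mu'\in\M(X')$ is precisely the hard point --- in a general balayage space a potential need not admit any such integral representation even when a Green-type density for the potential kernel exists. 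The paper closes this gap by invoking \cite[Theorem 1.1]{HN-representation}, and the hypothesis it must verify is that $X'$ has \emph{no} finely isolated points; this follows from the observation that a balayage space can have at most countably many finely isolated points, whereas the translation invariance in the time variable would force the whole line $\{x\}\times\real$ to consist of finely isolated points if one of them were. Your proposal neither isolates the existence problem nor supplies this argument, so as written (iv$'$) is assumed rather than proved. (The subsidiary claim that the representing measure sits on the superharmonic support is comparatively harmless once existence is settled.)
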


\begin{proof} By   (C), the function $G'$ is
continuous outside the diagonal in $X'\times X'$ and  lower
 semicontinuous on $X'\times X'$.
Hence $G'$ satisfies (ii). Moreover, (i$'$)  holds, by~\cite[Proposition 8.4]{dense-para}. 

Property (iii) is a consequence of   (KL), (CK) and (E). Indeed, let $K'$ be a compact in $X'$.
Then there exist  a compact $K$ in $X$, $t_0\in \real$, and $T>0$, such that $K'$ is contained in 
$ K\times [t_0, t_0+T]$. 
By (KL), there exists a compact  $L$ in $X$ such that
\begin{equation}\label{p-first}
p_t(x,y)\le 1\qquad\mbox{ for all } x\in X\setminus L,\, y\in
K \mbox{ and } t\in (0,T].
\end{equation} 
Let $a\ge 1$ be such that $p_T\le a$ on the compact $L\times K$.
By (\ref{p-first}), $p_T \le 1$ on~$(X\setminus L)\times K$.
  Therefore $p_T\le a$ on $X\times K$ and hence
\begin{equation}\label{p-second} 
         p_s(x,y)=\int p_{s-T}(x,z) p_T(z,y)\,d\mu(z)\le a  P_{s-T}1(x)\le a
\end{equation} 
for all $x\in X$, $y\in K$ and $s>T$.
Defining $L':=L\times [t_0,t_0+T]$ and combining (\ref{p-first}) and (\ref{p-second}), we obtain that 
$G'\le a$ on $(X'\setminus L')\times K'$. 

To see that $G'$ has the property (iv$'$) we recall that the potential kernel $V'$ of $\mathbbm P'$
is proper and, defining $m:=\mu\otimes \lambda_\real$,
\begin{eqnarray*} 
     V'f(x,r)&=&\intoi P_t'f(x,r)\,dt
       =\intoi P_t f(\cdot,r-t)(x)\,dt\\
&=& \intoi \bigl(\int_{X} p_t(x,y)f(y,r-t)\,d\mu(y)\bigr) \, dt\\
       &  =&  \int G'((x,r),(y,s))f(y,s)\,dm(y,s)
\end{eqnarray*}
for every $f\in\B^+(X')$; see \cite[page 674]{dense-para}. 

Let $q$ be a potential for the balayage space $(X',E_{\mathbbm P'})$.
By  \cite[II.3.11]{BH}, there exist bounded functions $f_n\in \B^+(X')$ such that $V'f_n\uparrow q$
 as $n\to \infty$.
Defining $\mu_n:=f_n m$ we hence know that $G'\mu_n\uparrow q$. Moreover, we note that 
 every balayage 
space contains only countably many finely isolated points; see \cite[III.7.2]{BH}.  Hence there is no
 point $(x,r)\in X\times \real$ which is finely isolated, since otherwise \emph{all} points $(x,t)$, $t\in \real$, 
would be finely isolated.
By \cite[Theorem~1.1]{HN-representation},
we finally conclude that there exists $\mu'\in \M(X')$ such that $q=G'\mu'$; cf.\ 
also \cite[Remark~8.7]{dense-para} and the references therein.
\end{proof} 

Since $(X', E_{\mathbbm P'})$ is a balayage space, there  exists a Hunt process 
 $\mathfrak X'$ on~$X'$ with transition semigroup $\mathbbm P'$, the 
\emph{space-time process} associated with $(x,y,t)\mapsto p_t(x,y)$  and $\mu$; see \cite[Theorem IV.8.1]{BH} and its proof. 
So, by Corollary~\ref{m-semipolar}, we have the following result.

\begin{corollary}\label{para-corollary}
If $A$ is a subset of $X'$ such that $m_{G'}(A)<\infty$, then there exists  
a~Borel set $\tilde A$ containing $A$ which is semipolar, that is, the process $\mathfrak X'$
  hits the set $\tilde A$ at most countably many times. 
\end{corollary}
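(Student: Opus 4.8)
The plan is to apply the abstract results of Sections~\ref{G-section} and \ref{habasp} verbatim to the balayage space $(X',E_{\mathbbm P'})$ equipped with the Green function $G'$, with $X'$ playing the role of the ambient space $X$ and $G'$ the role of the kernel $G$. The point is that every structural hypothesis has already been checked: Proposition~\ref{bal-space} shows that $(X',E_{\mathbbm P'})$ is a balayage space with $1\in E_{\mathbbm P'}$, and Theorem~\ref{G-para} shows that $G'$ enjoys properties (i$'$), (ii), (iii) and (iv$'$). Since (i$'$) implies property (i) of Section~\ref{G-section} and (iv$'$) implies (iv), the kernel $G'$ meets all the requirements under which Theorem~\ref{main-result} and Proposition~\ref{G-semipolar-semipolar} were established.

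With this in place the argument is a two-step application. First, the hypothesis $m_{G'}(A)<\infty$ together with Theorem~\ref{main-result} (read with $X$ replaced by $X'$ and $G$ by $G'$) produces a $G'$-semipolar Borel set $\tilde A$ containing $A$; this is exactly the Borel set promised in the statement. Second, because $(X',E_{\mathbbm P'})$ is a balayage space and $G'$ satisfies (iv), Proposition~\ref{G-semipolar-semipolar} applies and upgrades ``$G'$-semipolar'' to ``semipolar'' in the potential-theoretic sense for the Borel set $\tilde A$. Equivalently, one may simply invoke Corollary~\ref{m-semipolar} in the setting of $(X',E_{\mathbbm P'},G')$.

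It then remains only to render ``semipolar'' in the probabilistic language of the space-time process. Since $(X',E_{\mathbbm P'})$ is a balayage space, there is an associated Hunt process $\mathfrak X'$ with transition semigroup $\mathbbm P'$, and for such a process the semipolar Borel sets are precisely those that are hit (almost surely) at most countably many times; this is the standard characterization recalled in the introduction, see~\cite{BH}. Applying it to the semipolar Borel set $\tilde A\supset A$ gives the stated conclusion. I do not anticipate any genuine obstacle, since the whole difficulty has been absorbed into the two earlier results: the delicate covering estimate is carried out in Theorem~\ref{main-result}, and the verification that $G'$ is a bona fide Green function for the balayage space is the content of Theorem~\ref{G-para}. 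The only care required is to keep the substitution $(X,G)\rightsquigarrow(X',G')$ consistent throughout and to invoke the semipolarity-versus-hitting equivalence for the correct process $\mathfrak X'$.
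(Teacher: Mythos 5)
Your proposal is correct and follows essentially the same route as the paper: the authors likewise obtain the result by applying Corollary~\ref{m-semipolar} (i.e.\ Theorem~\ref{main-result} plus Proposition~\ref{G-semipolar-semipolar}) to the balayage space $(X',E_{\mathbbm P'})$ with Green function $G'$, whose hypotheses were verified in Proposition~\ref{bal-space} and Theorem~\ref{G-para}, and then invoke the Hunt process $\mathfrak X'$ to translate semipolarity into the countable-hitting statement. No gaps.
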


\section{Application to heat kernels on metric measure spaces}\label{section-metric}

Let $(X,d)$ be a separable metric space, $X\ne\emptyset$, 
where  balls $D(x,r):=\{ d(\cdot,x)<r\}$ are relatively compact,
  let $\mu$ be a positive Radon measure on $X$ with full support, 
and suppose that we have  a~continuous positive real function 
\begin{equation}\label{intro-pt}
   (x,y,t)\mapsto p_t(x,y) \on X\times X\times (0,\infty)
\end{equation} 
such that the  following holds:
\begin{itemize} 
\item[\rm (CK)]
\emph{Chapman-Kolmogorov equations}:   
For all $s,t\in (0,\infty)$ and $ x,y\in X$,
\begin{equation*} 
 p_{s+t}(x,y)=\int p_{s}(x,z)p_{t}(z,y)\,d\mu(z). 
\end{equation*} 
\item[\rm (E)]
For every $x\in X$,
\begin{equation*}
                 \sup_{t>0} \int p_t(x,y)\,d\mu(y)= 1.
\end{equation*} 
\item[\rm (B)]
There exist   constants $\a,\b>0$ and positive decreasing real functions $\Phi_1$, $\Phi_2$ 
on~$[0,\infty)$ such that $\Phi_1(1)>0$, $   \int_0^\infty  \sigma^{\a-1} \Phi_2(\sigma) \,  d\sigma <\infty$
and
\begin{equation}\label{bounds}
\frac 1{t^{\a/\b}} \Phi_1\left(\frac{d(x,y)}{t^{1/\b}}\right) \le p_t(x,y) \le \frac 1{t^{\a/\b}} \Phi_2\left(\frac{d(x,y)}{t^{1/\b}}\right) 
\end{equation} 
 for all $x,y\in X$ and $t>0$.
\end{itemize}

\begin{remarks}
{\rm 
1. See \cite[(1.5) and (H$_0$) on p.\,2067]{ghl} for assumption (B)
and  \cite{grigor-telcs} as well as \cite[Theorem 2.10]{gcap} for conditions implying
the continuity of (\ref{intro-pt}). For the definition of an abstract heat kernel, the relation to Dirichlet
forms  and a~discussion of various examples see  \cite{grigor-hu-lau}. 
For manifolds, (E) follows from 
\cite[(7.50) and (7.53) in Theorem 7.13]{grigor-book}.

2.  A striking fact is the following dichotomy; see \cite[Theorem 4.1]{grigor-kumagai}.
Suppose that we have an abstract heat kernel satisfying  (\ref{bounds}) 
with functions of the form  $\Phi_j(\sigma)=C_j\Phi(c_j \sigma)$,  where $C_j,c_j\in (0,\infty)$
and $\Phi\colon [0,\infty)\to [0,\infty)$ is decreasing with $\Phi(\sigma_0)>0$ for some $\sigma_0>0$. Then, under mild
additional assumptions on $X$ and $p_t(x,y)$,    
either $\b\ge 2$ and (\ref{bounds}) holds with 
\begin{equation}\label{Gaussian}
                \Phi(\sigma)=\exp\bigl(-\sigma^{\frac \b{\b-1}}\bigr ),  
\end{equation}
leading to \emph{sub-Gaussian} lower and upper bounds, or  (\ref{bounds}) holds with
\begin{equation}\label{stable}
                 \Phi(\sigma)=(1+\sigma)^{-(\a+\b)}, 
\end{equation} 
leading to \emph{stable-like} lower and upper bounds.
In the case (\ref{Gaussian}) the corresponding process will be a diffusion, 
in the case (\ref{stable}) it will be a jump process.

3.  A special case for sub-Gaussian bounds is, of course, the classical Gauss-Weierstrass kernel on $X=\realn$
(with $d(x,y)=|x-y|$ and Lebesgue measure $\mu$), where $\a=n$, $\b=2$, $C_1=C_2=(4\pi)^{-n/2}$ and $c_1=c_2=1/4$.
See  also \cite{li-yau} and \cite{grigor-ussr-92} for two-sided Gaussian bounds (sub-Gaussian bounds
with $c_1=c_2$) for heat kernels on manifolds. 
Many further examples for sub-Gaussian bounds (with $\b>2$)  are given by fractal spaces like Sierpinski gaskets 
and carpets;  see  \cite{barlow} and \cite{barlow-bass-carpet}. For properties which
in the setting of regular local Dirichlet forms are equivalent to two-sided sub-Gaussian bounds
we refer the reader to \cite{grigor-telcs}, \cite{grigor-hu} and \cite{grigor-hu-lau-2}. 

4. A special case with stable-like bounds is the $\b$-stable heat kernel
given by  the fractional Laplacian $(-\Delta)^{\b/2}$ on $\realn$ with  $\a=n$ and $0<\b<\min\{2,n\}$. 
More generally, subordination applied to heat kernels with Gaussian bounds leads to heat kernels with
stable-like bounds; see \cite{stos} and \cite{grigor-heat-handbook} for further examples obtained by subordination. 
For a direct approach on $d$-sets see \cite{chen-kumagai-dsets}. Moreover, see \cite{grigor-hu-hu} for properties 
characterizing the existence of two-sided stable like bounds in the setting of regular Dirichlet forms having 
a~jumping part (but no killing part).
  }
\end{remarks}

Let us now verify  the properties (C) and (KL) introduced in the previous section. 
To that end we define 
\begin{equation}\label{def-vp2}
      \vp_2(\sigma):=\sigma^\a\Phi_2(\sigma), \qquad \sigma>0,
\end{equation} 
and observe that taking  $\sigma:=d(x,y)t^{-1/\b}$
the upper bound $q_t(x,y)$ in (\ref{bounds}) can be written as
 \begin{equation}\label{qt}
                                  q_t(x,y)=t^{-\a/\b} \Phi_2(\sigma)=d(x,y)^{-\a} \vp_2(\sigma).
 \end{equation} 
Moreover, we note that $\lim_{\sigma\to\infty} \vp_2(\sigma)=0$ 
and hence, in particular,
\begin{equation*} 
M:=\sup\{\vp_2(\sigma)\colon 0\le \sigma<\infty\}< \infty.
\end{equation*} 
Indeed, let $I_k:=[2^k,2^{k+1}]$ and $\g_k:=\inf \vp_2(I_k)$, $k\in\nat$. 
Clearly,
$\int_{I_k} \vp_2(\sigma) \sigma\inv\, d\sigma \ge \g_k/2 $ for every $k\in\nat$,
and hence the integrability of $\sigma^{\a-1}\Phi_2$  implies that $\lim_{k\to\infty}\g_k=0$. 
Since $\Phi_2$ is decreasing 
and $\sigma^\a\le 4^\a \tau^\a$ for all 
$\sigma,\tau\in I_k\cup I_{k+1}$, we obtain that $\vp_2\le 4^\a \g_k$ on $I_{k+1}$, $k\in\nat$. 
Thus $\lim_{\sigma\to\infty} \vp_2(\sigma)=0$. 

\begin{lemma}\label{C}
Property {\rm(C)} of Section \ref{space-time} holds.
\end{lemma}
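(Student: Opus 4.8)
The plan is to verify property (C) directly from the two-sided bounds~(\ref{bounds}), handling its two assertions separately. Recall that~(C) has two parts: first, that $\limsup_{(x,t)\to(x_0,0)} p_t(x,x_0)=\infty$ for every $x_0\in X$; and second, that $\lim_{(x,y,t)\to(x_0,y_0,0)} p_t(x,y)=0$ whenever $x_0\ne y_0$. The lower bound in~(\ref{bounds}) will drive the first assertion and the upper bound the second.

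For the first assertion, I would take $x=x_0$ and let $t\downarrow 0$ along the diagonal. Then $d(x_0,x_0)=0$, so the lower bound gives $p_t(x_0,x_0)\ge t^{-\a/\b}\Phi_1(0)$. Since $\Phi_1$ is positive and decreasing with $\Phi_1(1)>0$, we have $\Phi_1(0)\ge\Phi_1(1)>0$, and therefore $p_t(x_0,x_0)\ge t^{-\a/\b}\Phi_1(1)\to\infty$ as $t\downarrow 0$ (using $\a,\b>0$). This is a fortiori enough for the $\limsup$ taken over all approaches $(x,t)\to(x_0,0)$.

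For the second assertion, fix $x_0\ne y_0$ and set $\rho:=d(x_0,y_0)>0$. By continuity of the metric, there is a neighborhood of $(x_0,y_0)$ on which $d(x,y)\ge\rho/2$. On this neighborhood the upper bound and the rewriting~(\ref{qt}) give
\begin{equation*}
 p_t(x,y)\le t^{-\a/\b}\Phi_2\!\left(\frac{d(x,y)}{t^{1/\b}}\right)
 = d(x,y)^{-\a}\,\vp_2\!\left(\frac{d(x,y)}{t^{1/\b}}\right)
 \le (\rho/2)^{-\a}\,\vp_2\!\left(\frac{\rho/2}{t^{1/\b}}\right),
\end{equation*}
where the last step uses that $\vp_2$ need not itself be monotone, so I must be slightly careful: I would instead bound $p_t(x,y)\le d(x,y)^{-\a}M$ crudely and combine it with a separate estimate as $t\downarrow 0$, OR argue that as $t\downarrow 0$ the argument $d(x,y)t^{-1/\b}\to\infty$ uniformly on the neighborhood (since $d(x,y)\ge\rho/2$), whence $\vp_2(d(x,y)t^{-1/\b})\to 0$ by the already-established fact $\lim_{\sigma\to\infty}\vp_2(\sigma)=0$. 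Using the representation in terms of $\vp_2$ rather than $\Phi_2$ is the cleaner route, since $\vp_2\to 0$ at infinity controls the product $t^{-\a/\b}\Phi_2$ directly.

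The step I expect to require the most care is precisely this second one: the upper bound $t^{-\a/\b}\Phi_2$ involves the diverging prefactor $t^{-\a/\b}$, so I cannot merely invoke $\Phi_2\to 0$. The resolution is to pass to $\vp_2(\sigma)=\sigma^\a\Phi_2(\sigma)$, which absorbs the prefactor: writing the bound as $d(x,y)^{-\a}\vp_2(d(x,y)t^{-1/\b})$ and noting that $d(x,y)$ stays bounded below by $\rho/2$ while $d(x,y)t^{-1/\b}\to\infty$, the decay $\lim_{\sigma\to\infty}\vp_2(\sigma)=0$ (proved just above the lemma) forces $p_t(x,y)\to 0$ uniformly for $(x,y)$ near $(x_0,y_0)$. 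This uniformity is what yields the joint limit $\lim_{(x,y,t)\to(x_0,y_0,0)} p_t(x,y)=0$ and completes the verification of~(C).
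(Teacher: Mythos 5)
Your proof is correct and follows essentially the same route as the paper: the lower bound at $x=x_0$ gives the first assertion, and the rewriting $p_t(x,y)\le d(x,y)^{-\a}\vp_2\bigl(d(x,y)t^{-1/\b}\bigr)$ together with $\lim_{\sigma\to\infty}\vp_2(\sigma)=0$ (established just before the lemma) gives the second. Your self-correction about not using monotonicity of $\vp_2$ lands on exactly the argument the paper intends, just spelled out in more detail.
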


\begin{proof}  The lower estimate in (\ref{bounds})   
yields  that $\lim_{t\to 0}   p_t(x_0,x_0)=\infty$ for all $x_0\in X$. 
Since $\lim_{\sigma\to\infty} \vp_2(\sigma)=0$, (\ref{bounds}) and  (\ref{qt}) imply that  $\lim_{(x, y,t)\to (x_0,y_0,0)}
p_t(x,y)=0$, whenever $x_0,y_0$ are different points in $X$. 
\end{proof} 

\begin{lemma}\label{basic}
Let $K$ be a compact in $X$ and $\ve>0$. 
Then there exist  $T>0$ and a~compact~$L$ in~$X$ 
such that, for   $t>0$, $x\in X$ and $y\in K$,
\begin{equation}\label{KL-first}
p_t(x,y) 
<\ve, \quad\mbox{ whenever } 
t>T \mbox{ or } x\in X\setminus L. 
\end{equation} 
\end{lemma}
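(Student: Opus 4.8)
The plan is to control $p_t(x,y)$ entirely through its upper bound $q_t(x,y)$ from~(\ref{bounds}), exploiting the two equivalent expressions for $q_t$ recorded in~(\ref{qt}): the form $t^{-\a/\b}\Phi_2(\sigma)$ is tailored to the large-$t$ regime, while the form $d(x,y)^{-\a}\vp_2(\sigma)$ is tailored to the regime where $x$ is far from $K$.

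First I would settle the large-time case, which in fact needs no compactness at all. Since $\Phi_2$ is a decreasing real function on $[0,\infty)$, one has $\Phi_2(\sigma)\le\Phi_2(0)<\infty$ for all $\sigma\ge0$, so~(\ref{bounds}) gives $p_t(x,y)\le t^{-\a/\b}\Phi_2(0)$ for all $x,y\in X$ and $t>0$. As $\a/\b>0$, the right-hand side tends to $0$ as $t\to\infty$; taking
\[
T:=(\Phi_2(0)/\ve)^{\b/\a}
\]
then forces $p_t(x,y)<\ve$ whenever $t>T$, uniformly in $x\in X$ and $y\in K$.

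Next I would handle the case $x\in X\setminus L$, using the second expression in~(\ref{qt}) together with the finiteness of $M=\sup_{\sigma\ge0}\vp_2(\sigma)$ already established above. This yields $p_t(x,y)\le d(x,y)^{-\a}M$ for all $t>0$, so the task reduces to making $d(x,y)$ large uniformly over $y\in K$. Here is where compactness enters: $K$ is bounded, so fixing $x_0\in X$ there is $R_0>0$ with $K\subset D(x_0,R_0)$, and the triangle inequality gives $d(x,y)>R-R_0$ for all $y\in K$ whenever $d(x,x_0)>R$. Choosing $R:=R_0+(M/\ve)^{1/\a}$ and letting $L:=\{x\in X\colon d(x,x_0)\le R\}$ --- a compact set, since closed balls are compact under the standing assumption that balls are relatively compact --- I obtain $p_t(x,y)\le d(x,y)^{-\a}M<\ve$ for every $x\in X\setminus L$, $t>0$ and $y\in K$. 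Combining the two cases proves~(\ref{KL-first}).

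The argument is routine once the dual representation~(\ref{qt}) is available; there is no genuine obstacle. The only steps calling for a little care are the passage from ``$x$ far from the fixed point $x_0$'' to ``$x$ far from every point of $K$'', which uses the boundedness of the compact set $K$ and the triangle inequality, and the verification that the sublevel set $L$ is genuinely compact, which relies on the hypothesis that balls in $(X,d)$ are relatively compact.
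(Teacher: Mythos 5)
Your proposal is correct and follows essentially the same route as the paper: bound $p_t$ by $t^{-\a/\b}\Phi_2(0)$ to handle $t>T$, and by $d(x,y)^{-\a}M$ (via the second expression in (\ref{qt}) and the finiteness of $M$) to handle $x$ outside a large ball $L$ containing a neighborhood of $K$. The choices of $T$, of the radius, and of $L$ match the paper's up to trivial normalization, so there is nothing to add.
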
 

\begin{proof} 
There exist $y_0\in
X$ and  $R>0$ such that $K\subset
D(y_0,R)$. Let us choose $T,N\in(0,\infty)$ such that
$T^{-\a/\b}\Phi_2(0)<\ve$ and $N^{-\a}M<\ve $. 
Let $L$ be the closure of~$D(x_0,R+N)$ 
and  $t>0$, $x\in X$, $y\in K$.   

If $t>T$, then $p_t(x,y)<\ve$, by (\ref{bounds}) and our choice of $T$.
If $x\in X\setminus L$, then $d(x,y)>N$, and hence $  p_t(x,y)<\ve$,
by (\ref{qt}) and our choice of $N$. 
\end{proof}

Having (E) and the lower bound in (B), 
we obtain that, taking
  $c_\mu:=\Phi_1(1)\inv$,  
\begin{equation}\label{volume}
                \mu(D(x,r))\le c_\mu r^\a\quad\mbox{ for all } x\in X \mbox{ and } r>0.
\end{equation} 
In particular, $\mu$ does not charge points and no point in $X$ is isolated.
Indeed,  it suffices to observe that, by the first inequality in (\ref{bounds}),
\begin{equation*} 
1\ge  \int p_{r^\b}(x,y)\,d\mu(y)\ge  \mu(D(x,r)) \inf_{y\in D(x,r)} p_{r^\b}(x,y)
\ge \mu(D(x,r))  r^{-\a} \Phi_1(1) 
\end{equation*} 
(cf.~the first lines of the proof for \cite[Theorem 3.2]{ghl}).  

\begin{lemma}\label{KL-holds}
Property {\rm (KL)} of Section \ref{space-time} holds. 
\end{lemma}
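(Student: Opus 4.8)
The plan is to produce, for a given compact $K\subset X$, $T>0$ and $\ve>0$, a single large closed ball $L$ that simultaneously controls both terms in (KL). First I would fix $y_0\in X$ and $R>0$ with $K\subset D(y_0,R)$ and, for a radius parameter $N>0$ to be chosen at the end, put $L:=\ov{D(y_0,R+N)}$; this set is compact because balls in $X$ are relatively compact. The triangle inequality then gives $d(x,y)>N$ whenever $x\in X\setminus L$ and $y\in K$, so everything reduces to estimating $p_t(\cdot,y)$ on the region $\{d(\cdot,y)>N\}$, uniformly for $y\in K$ and $t\in(0,T]$.

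The supremum term is the easy one. By the upper bound in (\ref{bounds}) together with (\ref{qt}) we have $p_t(x,y)\le d(x,y)^{-\a}\vp_2(d(x,y)t^{-1/\b})\le N^{-\a}M$ for $x\in X\setminus L$, so it suffices to take $N$ with $N^{-\a}M<\ve/2$; alternatively one may simply quote Lemma \ref{basic}.

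The real work is the $L^1$ term. Using the upper bound in (\ref{bounds}) I would estimate $\int_{X\setminus L}p_t(x,y)\,d\mu(x)\le\int_{\{d(\cdot,y)>N\}}t^{-\a/\b}\Phi_2(d(x,y)t^{-1/\b})\,d\mu(x)$ by decomposing the region into dyadic annuli $A_k:=\{2^kN\le d(\cdot,y)<2^{k+1}N\}$, $k\ge 0$. On $A_k$ the monotonicity of $\Phi_2$ bounds $\Phi_2(d(x,y)t^{-1/\b})$ by $\Phi_2(\sigma_k)$ with $\sigma_k:=2^kN\,t^{-1/\b}$, the volume bound (\ref{volume}) gives $\mu(A_k)\le\mu(D(y,2^{k+1}N))\le c_\mu(2^{k+1}N)^\a$, and the identity $t^{-\a/\b}\Phi_2(\sigma)=\sigma^{-\a}t^{-\a/\b}\vp_2(\sigma)$ lets the powers of $2^kN$ cancel, leaving $\int_{A_k}\le c_\mu 2^\a\,\vp_2(\sigma_k)$. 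Summing over $k$ thus reduces (KL) to the claim that $\sup_{\sigma\ge\sigma_*}\sum_{k=0}^\infty\vp_2(2^k\sigma)\to 0$ as $\sigma_*\to\infty$; here one uses $t\le T$, which forces $\sigma:=N\,t^{-1/\b}\ge N\,T^{-1/\b}=:\sigma_*$ uniformly in $t$, and $\sigma_k=2^k\sigma$.

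This dyadic-sum estimate is the main obstacle, precisely because $\vp_2$ need not be monotone. I would dispose of it by the device already used in the text to prove $M<\infty$: for every $\tau_0>0$ and $s\in[\tau_0,2\tau_0]$, monotonicity of $\Phi_2$ yields $\vp_2(2\tau_0)=(2\tau_0)^\a\Phi_2(2\tau_0)\le(2\tau_0/s)^\a\vp_2(s)\le 2^\a\vp_2(s)$, whence $\vp_2(2\tau_0)\le(2^\a/\log 2)\int_{\tau_0}^{2\tau_0}\vp_2(s)\,s^{-1}\,ds$. Applying this with $2\tau_0=2^k\sigma$ for each $k\ge 0$ and noting that the resulting intervals fill up $[\sigma/2,\infty)$, I obtain $\sum_{k=0}^\infty\vp_2(2^k\sigma)\le(2^\a/\log 2)\int_{\sigma/2}^\infty s^{\a-1}\Phi_2(s)\,ds$. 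Since $\int_0^\infty s^{\a-1}\Phi_2(s)\,ds<\infty$ by (B), this tail integral tends to $0$ as $\sigma\to\infty$, and hence is uniformly small for $\sigma\ge\sigma_*$ once $\sigma_*$ is large. Choosing $N$ large enough that both $N^{-\a}M$ and $c_\mu 2^\a(2^\a/\log 2)\int_{\sigma_*/2}^\infty s^{\a-1}\Phi_2(s)\,ds$ are below $\ve/2$ then makes the full expression in (KL) smaller than $\ve$ for all $(y,t)\in K\times(0,T]$, completing the verification.
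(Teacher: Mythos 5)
Your proof is correct and follows essentially the same route as the paper's: a large ball $L$ around $K$, a decomposition of its complement into dyadic annuli combined with the volume bound (\ref{volume}) to reduce the $L^1$ term to $\sum_k \vp_2(2^k\sigma)$, and the same comparison of $\vp_2$ at dyadic points with the integral $\int \sigma^{\a-1}\Phi_2(\sigma)\,d\sigma$ over adjacent intervals to make that sum small. The only cosmetic difference is that you center the annuli at $y$ rather than at the fixed point $y_0$, which changes nothing of substance.
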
 

\begin{proof}  Let $K$ be a compact in $X$, $T>0$ and $\ve>0$. We fix $y_0\in X$ and $R\ge 1$ such that 
$K\subset D(y_0,R)$ and define
\begin{equation*} 
D_j:=D(y_0, 2^j R ), \qquad j\in\nat.
\end{equation*} 
Since $\sigma^{\a-1}\Phi_2$ is integrable, there exists $k\in\nat$ such that, 
defining $\sigma_k:=2^{k-2}R/T^{1/\b}$, 
\begin{equation}\label{def-ve}
         2^{3\a+1} c_\mu       \int_{\sigma_k}^\infty\vp_2(\sigma)\,\frac{d\sigma}\sigma <\ve/2.
\end{equation} 
By Lemma \ref{basic}, we may assume that $p_t(x,y)<\ve/2$ whenever $y\in K$, $x\in X\setminus D_k$
and $t>0$ .
We claim that the closure $L$ of $D_k$ has the desired properties.

So let us fix $0<t\le T$ and $y\in K$.  If $j\in\nat$ and $x\in D_{j+1}\setminus D_j$, 
then $d(x,y)\ge 2^{j-1}R$, and hence, by the monotonicity of $\Phi_2$ and (\ref{volume}),
\begin{equation*}
\int_{D_{j+1}\setminus D_j} p_t(x,y)\,d\mu(x)\le  \frac 1{t^{\a/\b}} \Phi_2\left(\frac{2^{j-1}R}{t^{1/\b}}\right) \cdot
           c_\mu   \bigl(2^{j+1}R\bigr)^\a= 4^\a c_\mu\vp_2 \left(\frac{2^{j-1}R}{t^{1/\b}}\right).
\end{equation*}  
If $b\in (0,\infty)$, then 
$
                         \vp_2(2b) =(2b)^\a \Phi_2(2b)\le 2^{\a+1}\sigma^\a\Phi_2(\sigma) \cdot  ( b/\sigma)
$
for every $\sigma\in[b,2b]$, 
and therefore, integrating on $[b,2b]$,
\begin{equation*}
                 \vp_2(2b) \le 2^{\a+1} \int_b^{2b} \vp_2(\sigma)\,\frac{d\sigma}\sigma.
\end{equation*} 
Since $2^{k-2}R/t^{1/\b}\ge \sigma_k$, we  conclude that
\begin{equation*}
    \int_{X\setminus D_k} p_t(x,y)\,d\mu(x)\le 4^{\a}c_\mu\sum_{j=k}^\infty \vp_2\left(\frac{2^{j-1}R}{t^{1/\b}}\right)
  \le 2^{3\a+1}c_\mu\int_{\sigma_k}^\infty \vp_2(\sigma)\,\frac{d\sigma}\sigma<\ve/2. 
\end{equation*} 
Thus (KL) holds. 
\end{proof}

\subsection{Semipolar sets in $X'=X\times \real$}

Having verified the properties (C) and (KL)  
we may apply the results from  Section~\ref{space-time}.
Let us first recall the   definitions 
\begin{equation*} 
  P_t'f(x,r):=\int p_t(x,y)f(y,r-t)\,d\mu(y), \qquad\mbox{$t>0$, $(x,r) \in X'$, $f\in \B^+(X')$},
\end{equation*} 
and
\begin{equation*}
      G'((x,r),(y,s)):=\begin{cases}
             p_{r-s}(x,y),&\mbox{\quad if } r>s,\\
         \ \ \  \   0      ,&\mbox{\quad if } r\le s, 
                     \end{cases}
\qquad \mbox{ $ (x,r), (y,s)\in X'$}. 
\end{equation*} 
Then  we know the following; see Proposition \ref{bal-space}, Theorem \ref{G-para} and Corollary \ref{para-corollary}.

\begin{theorem}\label{all-para}
\begin{itemize}
\item[\rm 1.] $\mathbbm P'=(P_t')_{t>0}$  is a~sub-Markov semigroup
on $X'$ such that $(X',E_{\mathbbm P'})$ is a balayage space with $1\in E_{\mathbbm P'}$.
\item[\rm 2.] There exists a Hunt process $\mathfrak X'$ on $X'$ with transition semigroup $\mathbbm P'$.
\item[\rm 3.] $G'$ is a   Green function for $(X', E_{\mathbbm P'})$ satisfying
{\rm (i$'$), (ii), (iii), (iv$'$)} of Section  \ref{habasp}.
\item [\rm 4.] Every set $A$ in $X'$ with
$m_{G'}(A)<\infty$ is contained in a~Borel set which is semipolar, that is,   which the process $\mathfrak X'$
hits at most countably many times.
\end{itemize} 
\end{theorem}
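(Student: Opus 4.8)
The plan is to observe that Theorem \ref{all-para} is essentially an assembly of results already established, so that the only genuine task is to verify that the metric-measure hypotheses (CK), (E), (B) of the present section imply all the standing assumptions of Section \ref{space-time}. Once this is done, each of the four assertions is simply quoted from an earlier statement.

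First I would reconcile the general framework. A separable metric space in which balls are relatively compact is locally compact with a countable base, so the topological requirements of Section \ref{space-time} hold; the function $(x,y,t)\mapsto p_t(x,y)$ is continuous and, by the lower bound in (B), strictly positive; and (\ref{volume}) shows that $\mu$ charges no point. Assumption (CK) is identical in both sections. Assumption (E) is also identical: since $P_t1(x)=\int p_t(x,y)\,d\mu(y)$, the condition $\sup_{t>0}\int p_t(x,y)\,d\mu(y)=1$ is exactly $\sup_{t>0}P_t1=1$, that is, $1\in E_{\mathbbm P}$. The two remaining assumptions are supplied by the lemmas of this section: (C) is Lemma \ref{C} and (KL) is Lemma \ref{KL-holds}.

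With the hypotheses of Section \ref{space-time} in force, the four parts follow directly. Part 1 is Proposition \ref{bal-space}. Part 3 is Theorem \ref{G-para}, which asserts that $G'$ enjoys (i$'$), (ii), (iii) and (iv$'$) of Section \ref{habasp}. For Part 2 I would invoke the general existence theorem for the Hunt process attached to a balayage space, \cite[Theorem IV.8.1]{BH}, applied to $(X',E_{\mathbbm P'})$ with transition semigroup $\mathbbm P'$. Part 4 is Corollary \ref{para-corollary}: because $G'$ satisfies (i$'$)--(iv$'$), Corollary \ref{m-semipolar} applies with $G=G'$ on $X'$, so $m_{G'}(A)<\infty$ forces $A$ to be contained in a semipolar Borel set, which by the balayage-space characterization of semipolarity is precisely a set that $\mathfrak X'$ hits at most countably many times.

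I do not expect a serious obstacle here, since the analytic work was already carried out in Lemmas \ref{C} and \ref{KL-holds}, in Theorem \ref{G-para}, and in (\ref{volume}); the only point requiring care is the routine bookkeeping that matches the two formulations of the standing assumptions and confirms that every regularity condition demanded in Section \ref{space-time} is genuinely supplied by (CK), (E) and (B) in the present metric-measure setting.
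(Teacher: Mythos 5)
Your proposal matches the paper's own proof: the paper likewise just verifies that the metric-measure hypotheses place the situation within the framework of Section \ref{space-time} (via (CK), (E), Lemma \ref{C}, Lemma \ref{KL-holds}, and the fact that $\mu$ charges no point by (\ref{volume})), and then cites Proposition \ref{bal-space}, Theorem \ref{G-para}, the Hunt-process existence result \cite[Theorem IV.8.1]{BH}, and Corollary \ref{para-corollary} for the four parts. No further argument is given in the paper, so your assembly is exactly the intended one.
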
 

Moreover, we shall see that $m_{G'}$ 
 is equivalent to an anisotropic Hausdorff measure~$m_{\a,\b}$.
 To that end we first recall that in our setting, for   $x'\in X'$ and $\rho>0$,
\begin{equation*} 
                         B(x',\rho)=\{y'\in X'\colon G'(x',y')>1/\rho\}
\end{equation*} 
and that $m_{G'}=m_{1,B}$; see Section \ref{sec-comp}. 
That is, for every set $A$ in $X'$,  
\begin{equation*}
         m_{G'}(A)=\lim_{\delta\to 0}
                     \inf\big\{\sumj  \rho_j\colon 
                      A\subset \bigcup_{j=1}^\infty B(x_j',\rho_j),\ x'_j\in X', \, 0<\rho_j<\delta\big\}.
\end{equation*} 
 We now consider cylinders $\zb(x',\rho)$ in $X'$ 
given,  for $x'=(x,r)\in X'$ and $\rho>0$, by
 \begin{equation*}
    \zb(x',\rho):=\bigl\{(y,s)\in X'\colon d(x,y)<\rho\mbox{ and }
    |r-s|<\rho^\b\bigr\} 
 \end{equation*} 
 and define, for subsets $A$ of $X'$ and $\delta>0$,
 \begin{equation*} 
  m_{\a,\b}^\delta(A):=\inf\biggl\{\sum_{j=1}^\infty \rho_j^\a\colon
  A\subset \bigcup_{j=1}^\infty \zb(x_j',\rho_j),\, x_j'\in
  X',\, 0<\rho_j<\delta\biggr\},
 \end{equation*},
 \begin{equation*}
    m_{\a,\b}(A):=\lim_{\delta\to 0} m_{\a,\b}^\delta(A).
 \end{equation*}

\begin{proposition}\label{BZ}
There exists $C>0$ such that, for all
  $x'\in X'$ and $\rho>0$,  
\begin{equation}\label{BssZ}
B(x', \rho)\subset Z_\b(x',(C\rho)^{1/\a}).
\end{equation} 
In particular, $m_{\a,\b}\le C m_{G'}$. 
\end{proposition}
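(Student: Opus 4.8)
The plan is to prove the set inclusion (\ref{BssZ}) by a direct pointwise argument, extracting two separate coordinate bounds from the single upper estimate on $p_t$, and then to read off $m_{\a,\b}\le C m_{G'}$ as an immediate instance of the general comparison result, Proposition~\ref{general-comparison}. First I would fix $x'=(x,r)\in X'$ and $\rho>0$ and take an arbitrary $y'=(y,s)\in B(x',\rho)$. By the definitions of $B(x',\rho)$ and of $G'$ this forces $r>s$ and $p_{r-s}(x,y)>1/\rho$. Writing $t:=r-s>0$ and $\sigma:=d(x,y)t^{-1/\b}$, the upper bound in (B) together with the identity (\ref{qt}) gives
\begin{equation*}
\frac 1\rho < p_t(x,y)\le t^{-\a/\b}\Phi_2(\sigma)=d(x,y)^{-\a}\vp_2(\sigma).
\end{equation*}

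The point is that this single scalar inequality splits into one bound per coordinate via two different majorants. Using $\vp_2(\sigma)\le M$ I obtain $d(x,y)^\a< M\rho$ (which is trivially true in the degenerate case $y=x$), controlling the spatial coordinate; using that $\Phi_2$ is decreasing, so $\Phi_2(\sigma)\le\Phi_2(0)$, I obtain $t^{\a/\b}<\Phi_2(0)\rho$, controlling the time coordinate. Setting $C:=\max\{M,\Phi_2(0)\}$ — which is finite since $M<\infty$ and $\Phi_2$ is a real function — the two estimates read $d(x,y)<(C\rho)^{1/\a}$ and $|r-s|=t<(C\rho)^{\b/\a}=\bigl((C\rho)^{1/\a}\bigr)^\b$, which is precisely the assertion $y'\in \zb(x',(C\rho)^{1/\a})$. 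This establishes (\ref{BssZ}).

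For the ``in particular'' claim I recall that $m_{G'}=m_{1,B}$ and $m_{\a,\b}=m_{\a,\zb}$ in the notation of Section~\ref{sec-comp}. The inclusion (\ref{BssZ}) has exactly the shape $B(x',\rho)\subset\zb\bigl(z,\kappa\rho^{\eta/\tilde\eta}\bigr)$ required by Proposition~\ref{general-comparison}, with $z=x'$, $\eta=1$, $\tilde\eta=\a$ and $\kappa=C^{1/\a}$; the exponent check $\eta/\tilde\eta=1/\a$ is what makes $\kappa\rho^{1/\a}=(C\rho)^{1/\a}$. That proposition then yields $m_{\a,\b}\le\kappa^{\tilde\eta}m_{G'}=C m_{G'}$. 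The argument is essentially routine, and the only genuine bookkeeping lies in recognizing that the spatial and temporal constraints come from the two distinct bounds $\vp_2\le M$ and $\Phi_2\le\Phi_2(0)$, so that a single constant $C=\max\{M,\Phi_2(0)\}$ absorbs both and emerges unchanged as $\kappa^{\a}=C$ from the comparison.
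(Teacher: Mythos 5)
Your proof is correct and follows essentially the same route as the paper: the same splitting of the single inequality $1/\rho < t^{-\a/\b}\Phi_2(\sigma) = d(x,y)^{-\a}\vp_2(\sigma)$ into the time bound via $\Phi_2(\sigma)\le\Phi_2(0)$ and the space bound via $\vp_2(\sigma)\le M$, the same constant $C=\max\{\Phi_2(0),M\}$, and the same appeal to Proposition~\ref{general-comparison} with $\eta=1$, $\tilde\eta=\a$, $\kappa=C^{1/\a}$. No issues.
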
 

\begin{proof}  Let $x'=(x,r)\in X'$, $\rho>0$ and $y'=(y,s)$ in $B(x',\rho)$.  
Then $t=r-s>0$ and, taking $\sigma:=d(x,y)t^{-1/\b} $,
 \begin{equation*}
              1/\rho< {G'}(x',y')=p_t(x,y)\le  t^{-\a/\b} \Phi_2(\sigma)= d(x,y)^{-\a}\vp_2(\sigma).
\end{equation*} 
So $t<(\Phi_2(0)\rho)^{\b/\a}$ and $d(x,y)<(M\rho)^{1/\a}$.  That is, (\ref{BssZ}) holds
with $C:=\max\{\Phi_2(0),  M\}$. An application of Proposition \ref{general-comparison}
completes the proof.
\end{proof} 
 
\begin{proposition}\label{ZB}
There exists $ \kappa>0$ such that, for all
  $x'\in X'$ and $\rho>0$,  there is a point  $z' \in X'$ such that 
\begin{equation}\label{ZssB}
Z_\b(x',  (\kappa \rho)^{1/\a})\subset B(z',  \rho). 
\end{equation} 
In particular, $\kappa m_{G'}\le  m_{\a,\b}$. 
\end{proposition}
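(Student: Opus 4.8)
The plan is to establish the reverse inclusion to Proposition \ref{BZ}: given a cylinder $Z_\b(x',(\kappa\rho)^{1/\a})$, I must find a single $G'$-ball of radius $\rho$ that contains it. The key asymmetry to exploit is that a $G'$-ball $B(z',\rho)$ with $z'=(z,w)$ consists of points $(y,s)$ with $w>s$ and $p_{w-s}(z,y)>1/\rho$, which by the \emph{lower} bound in (B) is guaranteed whenever $(w-s)^{-\a/\b}\Phi_1\bigl(d(z,y)(w-s)^{-1/\b}\bigr)>1/\rho$. So the strategy is to choose the center $z'$ with its time coordinate $w$ placed \emph{above} the cylinder, so that every point of the cylinder sits at a strictly positive time-lag from $z'$, and then verify the lower-bound inequality uniformly over the cylinder.

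Concretely, for $x'=(x,r)$ I would take $z:=x$ in the space coordinate and set $w:=r+h$ for a suitable time-shift $h>0$ to be calibrated. Writing $\eta:=(\kappa\rho)^{1/\a}$ for the cylinder radius, a generic point $(y,s)\in Z_\b(x',\eta)$ satisfies $d(x,y)<\eta$ and $|r-s|<\eta^\b$; hence the time-lag is $t:=w-s=h+(r-s)\in(h-\eta^\b,\,h+\eta^\b)$. Choosing $h$ comparable to $\eta^\b$ (say $h=2\eta^\b$, so that $t\in(\eta^\b,3\eta^\b)$) keeps $t$ trapped between two fixed multiples of $\eta^\b$, and simultaneously keeps the normalized spatial variable $\sigma=d(x,y)t^{-1/\b}$ bounded by a fixed constant $\sigma_0$ (since $d(x,y)<\eta$ and $t>\eta^\b$ force $\sigma<1$). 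The point is that $\Phi_1$ is decreasing and positive at $1$, so $\Phi_1(\sigma)\ge\Phi_1(\sigma_0)>0$ on this range, giving a clean uniform lower estimate $p_t(x,y)\ge t^{-\a/\b}\Phi_1(\sigma_0)\ge (3\eta^\b)^{-\a/\b}\Phi_1(\sigma_0)=c\,\eta^{-\a}$ for an explicit constant $c$.

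It then remains to force $c\,\eta^{-\a}\ge 1/\rho$, i.e. $\eta^\a\le c\rho$; since $\eta^\a=\kappa\rho$ by definition, this is exactly the requirement $\kappa\le c$, which fixes the admissible value of $\kappa$ once and for all, independently of $x'$ and $\rho$. With $z':=(x,r+2\eta^\b)$ this yields $p_t(x,y)>1/\rho$ for every $(y,s)$ in the cylinder, which is precisely the statement $Z_\b(x',(\kappa\rho)^{1/\a})\subset B(z',\rho)$, proving (\ref{ZssB}). The final inequality $\kappa\, m_{G'}\le m_{\a,\b}$ then follows from Proposition \ref{general-comparison} applied with $F=Z_\b$ of exponent $\tilde\eta=\a$ and $\tilde F=B$ of exponent $\eta=1$, reading the inclusion in the form $Z_\b(x',(\kappa\rho)^{1/\a})\subset B(z',\rho)$ after the appropriate substitution of radii.

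The main obstacle I anticipate is the careful bookkeeping of the power $\b$: because the cylinder scales like $\rho^{1/\a}$ in space but like $\rho^{\b/\a}$ in time, the time-shift $h$ must be chosen to match the time-scale $\eta^\b$ exactly, and one must check that with this choice the normalized variable $\sigma$ genuinely stays in a \emph{fixed} interval as $\rho$ varies — this is what makes the single constant $\Phi_1(\sigma_0)$ legitimate. A secondary subtlety is merely notational: since the definition of $Z_\b$ in the paper is symmetric in $|r-s|$ while the $G'$-ball is one-sided in time (requiring $w>s$), the upward shift of the center is not optional but structurally necessary, and I would emphasize this point so the reader sees why $z'\ne x'$ in general.
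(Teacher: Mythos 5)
Your proof is correct and follows essentially the same route as the paper: shift the center upward in time by twice the cylinder's temporal half-height so that $t$ is trapped in $(\eta^\b,3\eta^\b)$, observe that $\sigma=d(x,y)t^{-1/\b}<1$, apply the lower bound with $\Phi_1(1)>0$ to calibrate $\kappa$, and finish with Proposition \ref{general-comparison}. The only cosmetic remark is that for strict membership in the $G'$-ball you should take $\kappa$ strictly below your constant $c$ (or note that $t<3\eta^\b$ is strict), but this does not affect the argument.
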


\begin{proof} We choose $\eta\in (0,1)$ such that $(3\eta)^{\a/\b}< \Phi_1(0)$ and define
$\kappa:=\eta^{\a/\b}$.
Let  $x'=(x,r)\in X'$, $\rho>0$, $z':=(x,r+2\eta\rho^{\b/\a})$ and $y'=(y,s)\in Z_\b(x',(\kappa\rho)^{1/\a})$.
Then $|r-s|<(\kappa\rho^{1/\a})^\b=\eta \rho^{\b/\a}$ and 
$t:=(r+2\eta\rho^{\b/\a})-s\in (\eta\rho^{\b/\a},3\eta\rho^{\b/\a})$. Hence 
\begin{equation*}
\sigma:=     \frac{d(x,y)}{t^{1/\b}}
<\frac{(\kappa\rho)^{1/\a}} {(\eta\rho^{\b/\a})^{1/\b}}=1
\end{equation*} 
and
\begin{equation*} 
    {G'}(x',y')    =p_t(x,y) \ge t^{-\a/\b} \Phi_1( \sigma) \ge (3\eta)^{-\a/\b} \rho\inv \Phi_1(1)>\rho\inv.
\end{equation*} 
So $y'\in B(x',\rho)$ proving  (\ref{ZssB}).  Again   an application of Proposition \ref{general-comparison} completes the proof.
\end{proof}

In particular,     
 Theorem \ref{all-para} and Proposition~\ref{ZB} yield the following.

\begin{corollary}\label{final-para}
Every set $A$ in $X'$ with $m_{\a,\b}(A)<\infty$, is contained in 
a~Borel set  which is semipolar, that is, which the process $\mathfrak X'$ 
  hits  at most countably many times.
\end{corollary}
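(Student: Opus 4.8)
The plan is to deduce Corollary~\ref{final-para} directly from the two results it cites, so no new analytic work is needed. The statement asserts that any set $A\subset X'$ with $m_{\a,\b}(A)<\infty$ sits inside a semipolar Borel set. First I would observe that Proposition~\ref{ZB} gives a constant $\kappa>0$ with $\kappa\, m_{G'}\le m_{\a,\b}$. Hence if $m_{\a,\b}(A)<\infty$, then
\begin{equation*}
 m_{G'}(A)\le \kappa^{-1} m_{\a,\b}(A)<\infty.
\end{equation*}
This is the only substantive input: it converts a finiteness hypothesis phrased in terms of the anisotropic Hausdorff measure into the corresponding finiteness for the intrinsic measure $m_{G'}$, to which the general machinery applies.

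Next I would invoke Theorem~\ref{all-para}.4 (equivalently, the combination of Theorem~\ref{main-result}, Proposition~\ref{G-semipolar-semipolar} and Corollary~\ref{para-corollary}), which states precisely that any set in $X'$ with $m_{G'}$-measure finite is contained in a semipolar Borel set, and that semipolarity here means the Hunt process $\mathfrak X'$ hits the set at most countably many times. Applying this with $A$ (now known to satisfy $m_{G'}(A)<\infty$) yields a Borel set $\tilde A\supset A$ that is semipolar, which is exactly the conclusion.

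There is essentially no obstacle: the corollary is a formal consequence of the measure comparison $\kappa\,m_{G'}\le m_{\a,\b}$ already established in Proposition~\ref{ZB}, together with the semipolarity criterion packaged in Theorem~\ref{all-para}. The only point deserving a word of care is that Theorem~\ref{all-para}.4 is itself valid only once the hypotheses (CK), (E), (C) and (KL) of Section~\ref{space-time} have been verified in the present metric-measure setting; these were checked in Lemmas~\ref{C} and~\ref{KL-holds} (with (CK) and (E) assumed outright), so the space-time results transfer without change. Thus the proof is a two-line chain: finiteness of $m_{\a,\b}(A)$ forces finiteness of $m_{G'}(A)$ via Proposition~\ref{ZB}, and Theorem~\ref{all-para}.4 then supplies the desired semipolar Borel superset.
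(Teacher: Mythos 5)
Your proof is correct and follows exactly the route the paper takes: Proposition~\ref{ZB} gives $m_{G'}(A)\le \kappa^{-1}m_{\a,\b}(A)<\infty$, and Theorem~\ref{all-para}.4 then supplies the semipolar Borel superset. The paper's own justification is precisely the one-line combination of these two results, so nothing further is needed.
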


\subsection{Semipolar sets in $X$} 

To complete the paper let us show that we may also generalize our results for classical potential theory 
and Riesz potentials to our setting of heat semigroups on metric measure spaces. To that end
we assume in this section that $\b<\a$, define
\begin{equation*}
            G(x,y):=\int_0^\infty p_t(x,y)\,dt, \qquad x,y\in X, 
\end{equation*} 
and introduce constants  $c,C\in (0,\infty)$ by
\begin{equation*}
 c:=\frac\b{\a-\b} \, \Phi_1(1)  \und C:= \b
\int_0^\infty \sigma^{\a-\b}\,\Phi_2(\sigma)\,\frac{d\sigma}\sigma .
\end{equation*}

\begin{proposition}\label{G-continuous}
$G$ has the properties {\rm (i) -- (iii)} of Section \ref{G-section}
and
\begin{equation}\label{G-estimate}
                c d(x,y)^{-(\a-\b)} \le G(x,y)\le C d(x,y)^{-(\a-\b)}, \qquad x,y\in X.
\end{equation} 
\end{proposition}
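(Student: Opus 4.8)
The plan is to prove the estimate (\ref{G-estimate}) first by direct integration, and then deduce properties (i)--(iii) from this two-sided bound together with the bounds (B) and the already-established lemmas of this section. For the key integral estimate, I would fix $x,y\in X$ with $x\ne y$, write $r:=d(x,y)>0$, and substitute in the defining integral
\begin{equation*}
G(x,y)=\int_0^\infty p_t(x,y)\,dt.
\end{equation*}
The natural change of variable is the one that already governs the bounds (B), namely $\sigma:=d(x,y)t^{-1/\b}=r t^{-1/\b}$, so that $t=r^\b\sigma^{-\b}$ and $dt=-\b r^\b \sigma^{-\b-1}\,d\sigma$; as $t$ runs over $(0,\infty)$, $\sigma$ runs over $(\infty,0)$. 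Applying the upper bound in (\ref{bounds}) gives $p_t(x,y)\le t^{-\a/\b}\Phi_2(\sigma)=r^{-\a}\sigma^{\a}\Phi_2(\sigma)$ after using $t^{-\a/\b}=r^{-\a}\sigma^{\a}$, and integrating in $\sigma$ produces exactly $C\,r^{-(\a-\b)}$ with $C=\b\int_0^\infty \sigma^{\a-\b}\Phi_2(\sigma)\,\frac{d\sigma}{\sigma}$. The finiteness of $C$ is where I would need the hypothesis $\b<\a$ (so the integrand is integrable near $\sigma=0$ where $\Phi_2$ is bounded) combined with the integrability assumption $\int_0^\infty\sigma^{\a-1}\Phi_2(\sigma)\,d\sigma<\infty$ from (B), which controls the tail; I should check that $\sigma^{\a-\b-1}\Phi_2(\sigma)$ is dominated appropriately by $\sigma^{\a-1}\Phi_2(\sigma)$ for large $\sigma$ and is integrable for small $\sigma$.

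For the lower bound I would use only the crude part of (B) that $\Phi_1$ is decreasing with $\Phi_1(1)>0$, restricting the integral to the range where $\sigma\le 1$, i.e.\ $t\ge r^\b$. On that range $\Phi_1(\sigma)\ge\Phi_1(1)$, so
\begin{equation*}
G(x,y)\ge \int_{r^\b}^\infty t^{-\a/\b}\Phi_1\!\left(\frac{r}{t^{1/\b}}\right)dt
\ge \Phi_1(1)\int_{r^\b}^\infty t^{-\a/\b}\,dt
=\Phi_1(1)\,\frac{\b}{\a-\b}\,r^{-(\a-\b)},
\end{equation*}
which is precisely $c\,r^{-(\a-\b)}$ with $c=\frac{\b}{\a-\b}\Phi_1(1)$; here again $\b<\a$ is exactly what makes $\int_{r^\b}^\infty t^{-\a/\b}\,dt$ converge. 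This establishes (\ref{G-estimate}).

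Given (\ref{G-estimate}), properties (i)--(iii) follow fairly directly. For (i), lower semicontinuity of $G(\cdot,y)$ comes from the continuity and positivity of $(x,t)\mapsto p_t(x,y)$ together with Fatou's lemma applied to the defining integral (a monotone/lower-semicontinuity argument for integrals of continuous integrands), and $\limsup_{x\to y}G(x,y)=\infty$ is immediate from the lower bound $G(x,y)\ge c\,d(x,y)^{-(\a-\b)}\to\infty$ as $x\to y$. For (ii), continuity off the diagonal follows because on $\{d(x,y)\ge\varepsilon\}$ the integrand is continuous in $(x,y)$ and dominated, via the upper bound in (\ref{bounds}) rewritten through $\vp_2$, by an integrable function uniform in a neighborhood, so dominated convergence applies; Borel measurability on the diagonal is automatic since $G$ is a pointwise integral of jointly measurable functions. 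For (iii), given a compact $K$ I would invoke the estimates already proved in this section — the tail control furnished by the proof of Lemma \ref{KL-holds} and the bound \eqref{volume} — to find a compact $L$ outside of which $d(x,y)$ is bounded below uniformly for $y\in K$, whence the upper bound in (\ref{G-estimate}) gives a uniform bound on $G$. I expect the main obstacle to be the careful bookkeeping in verifying that the constant $C$ is finite and that the dominated-convergence argument for (ii) has a genuinely integrable, locally uniform majorant near the diagonal; the change of variables itself and the lower bound are routine, but matching the tail integrability hypothesis of (B) to the integrand $\sigma^{\a-\b-1}\Phi_2(\sigma)$ is the one place where the precise form of the assumption must be used rather than waved at.
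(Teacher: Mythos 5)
Your proposal is correct and follows essentially the same route as the paper: the same substitution $\sigma=d(x,y)t^{-1/\b}$ yielding the upper bound with constant $C$, the same restriction to $\sigma\le 1$ (i.e.\ $t\ge d(x,y)^\b$) with $\Phi_1(\sigma)\ge\Phi_1(1)$ for the lower bound with constant $c=\frac{\b}{\a-\b}\Phi_1(1)$, and (i), (iii) read off from the two-sided estimate. The only cosmetic difference is in (ii): the paper invokes its generalized dominated convergence Lemma \ref{Leb-prime} with the varying majorants $q_t(x_n,y_n)$ whose integrals converge by the explicit computation, whereas you use the monotonicity of $\Phi_2$ to produce the fixed majorant $t^{-\a/\b}\Phi_2(\ve t^{-1/\b})$ on $\{d(x,y)\ge\ve\}$ and apply ordinary dominated convergence; both are valid.
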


\begin{proof} 
Since $\Phi_1(0)\ge \Phi_1(1)>0$, we know that $G=\infty$ 
 on the diagonal of $X\times X$. 
Considering $x,y\in X$, $x\ne y$, and taking $\sigma:=d(x,y) t^{-1/\b}$
we have 
\begin{equation*}
        \frac{d\sigma}{dt}= -\frac 1\b \, t\inv\sigma 
         =- \frac 1\b \,d(x,y)^{-\b}\sigma^{\b+1},  
\end{equation*} 
and hence, using  (\ref{qt}), 
\begin{equation*}
                              \int_0^\infty q_t(x,y)\,dt
=  \b d(x,y)^{-(\a-\b)}
  \int_0^\infty\sigma^{\a-\b}\Phi_2(\sigma)\,\frac{d\sigma}\sigma 
=C d(x,y)^{-(\a-\b)}.
\end{equation*} 
By (\ref{bounds}), we thus conclude that $G(x,y)\le C d(x,y)^{-(\a-\b)}$ and
\begin{equation*}  G(x,y)\ge  \b  d(x,y)^{-(\a-\b)}  \int_0^1 
\sigma^{\a-\b} \Phi_1(\sigma)\, \frac{d\sigma}\sigma \ge c d(x,y)^{-(\a-\b)}.
\end{equation*} 
In particular, (i) and~(iii) hold. 
Moreover, the continuity of $G$ outside the diagonal of $X\times X$  follows
immediately, by Lemma \ref{Leb-prime}.
\end{proof} 

Defining $m_{\a-\b}:=m_{\a-\b, F}$ with $F(x,\rho):=D(x,\rho)$, 
 Theorem~\ref{main-result} and Corollary~\ref{m-polar} already yield  the following.

\begin{theorem}\label{a-b-Hausdorff}
Let $A$ be a subset $X$.
\begin{itemize}
\item[\rm 1.] If  $m_{\a-\b}(A)<\infty$, then $A$ is contained in a $G$-semipolar Borel set.
\item[\rm 2.] If $m_{\a-\b}(A)=0$, then $A$ is contained in a $G$-polar Borel set.
\end{itemize} 
\end{theorem}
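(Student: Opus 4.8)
The plan is to reduce everything to the two general results already proved, by showing that the function $F(x,\rho):=D(x,\rho)$ and the intrinsic Hausdorff measure $m_G$ are comparable via Proposition~\ref{general-comparison}. The key fact is Proposition~\ref{G-continuous}, which gives the two-sided estimate $c\,d(x,y)^{-(\a-\b)}\le G(x,y)\le C\,d(x,y)^{-(\a-\b)}$ and verifies that $G$ has properties (i)--(iii) of Section~\ref{G-section}, so that Theorem~\ref{main-result} and Corollary~\ref{m-polar} apply directly to $G$. It thus remains only to see that finiteness (resp.\ nullity) of $m_{\a-\b}$ forces finiteness (resp.\ nullity) of $m_G$, and conversely.

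First I would translate the $G$-balls into metric balls. By definition $B(x,\rho)=\{y\colon G(x,y)>\rho\inv\}$. Using the upper bound $G(x,y)\le C\,d(x,y)^{-(\a-\b)}$, any $y\in B(x,\rho)$ satisfies $C\,d(x,y)^{-(\a-\b)}>\rho\inv$, i.e.\ $d(x,y)<(C\rho)^{1/(\a-\b)}$; hence $B(x,\rho)\subset D\bigl(x,(C\rho)^{1/(\a-\b)}\bigr)$. Symmetrically, the lower bound $G(x,y)\ge c\,d(x,y)^{-(\a-\b)}$ shows that $d(x,y)<(c\rho)^{1/(\a-\b)}$ implies $G(x,y)>\rho\inv$, so $D\bigl(x,(c\rho)^{1/(\a-\b)}\bigr)\subset B(x,\rho)$. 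Rewriting the first inclusion with radius $r=(C\rho)^{1/(\a-\b)}$ gives $B(x,\rho)\subset F(x,r)$ with $\rho=r^{\a-\b}/C$, and the second gives $F(x,r)\subset B(x,r^{\a-\b}/c)$.

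Next I would feed these two inclusions into Proposition~\ref{general-comparison}, which is exactly designed to convert such set inclusions into inequalities between measures of Hausdorff type. Recall that by definition $m_G=m_{1,B}$ and $m_{\a-\b}=m_{\a-\b,F}$. Applying the proposition with the inclusion $F(x,r)\subset \tilde F(x,\kappa r^{\eta/\tilde\eta})$ in the form $F(x,r)\subset B(x,c\inv r^{\a-\b})$ (so $\tilde F=B$, $\tilde\eta=1$, $\eta=\a-\b$, $\kappa=c\inv$) yields $m_{1,B}\le c\inv m_{\a-\b,F}$, that is $m_G\le c\inv m_{\a-\b}$. In the opposite direction, the inclusion $B(x,\rho)\subset F(x,(C\rho)^{1/(\a-\b)})$ gives $m_{\a-\b,F}\le C\, m_{1,B}$, i.e.\ $m_{\a-\b}\le C\,m_G$. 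Together these read
\begin{equation*}
\kappa_1\, m_{\a-\b}\le m_G\le \kappa_2\, m_{\a-\b}
\end{equation*}
for suitable positive constants, so $m_{\a-\b}$ and $m_G$ are equivalent.

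With the equivalence in hand both conclusions are immediate: if $m_{\a-\b}(A)<\infty$ then $m_G(A)<\infty$, so Theorem~\ref{main-result} places $A$ inside a $G$-semipolar Borel set, proving part~1; if $m_{\a-\b}(A)=0$ then $m_G(A)=0$, so Corollary~\ref{m-polar} places $A$ inside a $G$-polar Borel set, proving part~2. I do not expect any serious obstacle here, since the whole argument is a bookkeeping reduction to earlier results; the only point demanding mild care is matching the exponents $\eta/\tilde\eta=\a-\b$ correctly when invoking Proposition~\ref{general-comparison}, and checking that the $G$-balls are genuinely sandwiched between the metric balls of the prescribed radii, which is guaranteed by the two-sided bound~(\ref{G-estimate}).
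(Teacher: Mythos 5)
Your proposal is correct and follows essentially the same route as the paper: the paper states that Theorem~\ref{main-result} and Corollary~\ref{m-polar} ``already yield'' the result once Proposition~\ref{G-continuous} is in place, leaving implicit exactly the comparison $m_G \le c\inv m_{\a-\b}$ (and its converse) that you derive from the two-sided bound (\ref{G-estimate}) via Proposition~\ref{general-comparison}. Your bookkeeping of the exponents and constants in the two applications of Proposition~\ref{general-comparison} is accurate, so nothing is missing.
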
 

Further, an easy consequence of  (\ref{volume}) and  (\ref{G-estimate})  is the following estimate.

\begin{proposition}\label{Gmu-ball}
Let  $C_\mu:= 2^\a(2^\b-1)\inv c_\mu C$. Then, for all $x\in X$ \hbox{and~$R>0$},
\begin{equation*} 
\int_{D(x,R)} G(x,y)\,d\mu(y)\le C_\mu R^\b.
\end{equation*} 
\end{proposition}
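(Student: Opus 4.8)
The plan is to bound the integral by decomposing the ball $D(x,R)$ into dyadic annuli and applying the upper bound in (\ref{G-estimate}) together with the volume growth estimate (\ref{volume}). First I would write
\begin{equation*}
\int_{D(x,R)} G(x,y)\,d\mu(y) = \sum_{k=0}^\infty \int_{A_k} G(x,y)\,d\mu(y),
\end{equation*}
where $A_k := D(x,2^{-k}R)\setminus D(x,2^{-k-1}R)$ is the $k$-th dyadic annulus. The point of this decomposition is that on $A_k$ the distance $d(x,y)$ is comparable to $2^{-k}R$, so the factor $d(x,y)^{-(\a-\b)}$ coming from the upper bound in Proposition \ref{G-continuous} is essentially constant there.

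The key steps, in order, are as follows. On $A_k$ one has $d(x,y)\ge 2^{-k-1}R$, so by the upper bound in (\ref{G-estimate}),
\begin{equation*}
G(x,y)\le C\,d(x,y)^{-(\a-\b)}\le C\,(2^{-k-1}R)^{-(\a-\b)}.
\end{equation*}
Meanwhile $A_k\subset D(x,2^{-k}R)$, so by (\ref{volume}),
\begin{equation*}
\mu(A_k)\le \mu(D(x,2^{-k}R))\le c_\mu\,(2^{-k}R)^\a.
\end{equation*}
Multiplying these and summing over $k$ gives
\begin{equation*}
\int_{D(x,R)} G(x,y)\,d\mu(y)\le C c_\mu \sum_{k=0}^\infty (2^{-k-1}R)^{-(\a-\b)}(2^{-k}R)^\a
= C c_\mu\, 2^{\a-\b} R^\b \sum_{k=0}^\infty 2^{-k\b}.
\end{equation*}
The geometric series converges since $\b>0$, with value $(1-2^{-\b})\inv = 2^\b(2^\b-1)\inv$, yielding the bound $C c_\mu\,2^{\a-\b}\cdot 2^\b(2^\b-1)\inv R^\b = 2^\a(2^\b-1)\inv c_\mu C\, R^\b$, which is exactly $C_\mu R^\b$.

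There is no serious obstacle here; the only point requiring minor care is the arithmetic tracking of the powers of $2$ so that the constant matches the stated $C_\mu = 2^\a(2^\b-1)\inv c_\mu C$, and checking that the exponent $\a-\b$ is handled correctly (the annulus radius enters with a negative power and the volume with a positive power, and their product over the annulus leaves the clean $R^\b$ scaling). One should also note that the assumption $\b<\a$, in force throughout this subsection, guarantees that $G$ is finite and that the estimate (\ref{G-estimate}) is meaningful, while the convergence of the dyadic sum uses only $\b>0$.
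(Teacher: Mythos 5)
Your proof is correct and follows essentially the same route as the paper: a dyadic annulus decomposition of $D(x,R)$, the upper bound $G(x,y)\le C\,d(x,y)^{-(\a-\b)}$ from (\ref{G-estimate}), the volume estimate (\ref{volume}), and summation of the resulting geometric series, with the constant $C_\mu=2^\a(2^\b-1)\inv c_\mu C$ coming out exactly as stated. No discrepancies.
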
 

\begin{proof} 
 Let $x\in X$. For every $r>0$, 
\begin{equation*}
  \int_{D(x,r)\setminus D(x,r/2)} d(x,y)^{-(\a-\b)}\,d\mu(y)\le   \left(\frac r2\right)^{-(\a-\b)} \mu(D(x,r))\le
 2^{\a-\b} c_\mu r^\b,
\end{equation*} 
 and hence, for every $R>0$,
\begin{equation*} 
\int_{D(x,R)}  G(x,y)\,d\mu(y)\le 2^{\a-\b} c_\mu C \sum_{j=0}^\infty (2^{-j}R)^\b=C_\mu R^\b.
\end{equation*}
\end{proof} 

For every $t>0$, let
\begin{equation*}
            P_t f(x):=\int p_t(x,y)f(y)\,d\mu(y), \qquad f\in \B^+(X), \, x\in X.
\end{equation*} 
By \cite[Lemma 8.2]{dense-para}, $\mathbbm P:=(P_t)_{t>0}$ is a strong Feller sub-Markov semigroup on $X$
mapping $\C_0(X)$ into $C_0(X)$ and satisfying $\lim_{t\to 0} P_tf=f $ for every $f\in\C_0(X)$. 

Let $V$ denote the potential kernel of $\mathbbm P$, that is, for every $f\in \B^+(X)$, 
\begin{equation*} 
Vf:=\int_0^\infty P_tf\,dt= G(f\mu).
\end{equation*} 

\begin{proposition}\label{V-feller}
If  $f$ is a bounded function in $\B^+(X)$ with  compact support, 
then $Vf\in \C_0(X)$.
\end{proposition}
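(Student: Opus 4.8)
The plan is to show that $Vf = G(f\mu)$ is a continuous function vanishing at infinity whenever $f \in \B^+(X)$ is bounded with compact support. The key estimates are already in place: by Proposition \ref{Gmu-ball} we control the local contribution of $G$ against $\mu$, and by (\ref{G-estimate}) we have the two-sided bound $G(x,y) \asymp d(x,y)^{-(\a-\b)}$. The strategy is to split $Vf$ into a ``near'' part and a ``far'' part relative to a small ball, estimate the near part uniformly using Proposition \ref{Gmu-ball}, and handle the far part by continuity of $G$ off the diagonal.

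First I would fix $f$ bounded with compact support $K$ and write $\|f\|_\infty =: s < \infty$. For continuity at a point $x_0 \in X$, I would choose $R > 0$ and split
\begin{equation*}
Vf(x) = \int_{D(x,R)} G(x,y) f(y)\,d\mu(y) + \int_{X \setminus D(x,R)} G(x,y) f(y)\,d\mu(y).
\end{equation*}
By Proposition \ref{Gmu-ball}, the first term is bounded by $s\, C_\mu R^\b$ uniformly in $x$, so it can be made arbitrarily small by shrinking $R$; the same estimate applied near $x_0$ controls the corresponding piece of $Vf(x_0)$. For the second term, on $X \setminus D(x,R)$ the kernel $G(x,\cdot)$ is continuous (away from the diagonal, by Proposition \ref{G-continuous}), and since $f$ has compact support the integration is effectively over a compact set where $G$ is bounded and jointly continuous; a dominated-convergence argument (via Lemma \ref{Leb-prime}, or ordinary dominated convergence using the bound $G(x,y) \le C d(x,y)^{-(\a-\b)} \le C R^{-(\a-\b)}$ on this region) gives continuity of this far part in $x$ near $x_0$. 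Combining the two pieces yields continuity of $Vf$ at $x_0$.

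For the vanishing at infinity, I would use that $K := \supp f$ is compact together with property (iii) of $G$ (from Proposition \ref{G-continuous}): there is a compact $L$ such that $G$ is bounded, say by $a$, on $(X \setminus L) \times K$. Then for $x \notin L$,
\begin{equation*}
|Vf(x)| = \left| \int_K G(x,y) f(y)\,d\mu(y) \right| \le a\, s\, \mu(K),
\end{equation*}
which only shows boundedness; to get decay to zero I would instead exploit the explicit bound (\ref{G-estimate}): for $x$ far from $K$, $d(x,y)$ is large uniformly for $y \in K$, so $G(x,y) \le C d(x,y)^{-(\a-\b)} \to 0$ as $x \to \infty$, and hence $Vf(x) \le s\, C\, \mu(K) \cdot (\dist(x,K))^{-(\a-\b)} \to 0$. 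Since $\a - \b > 0$ by the standing assumption of this subsection, this gives $Vf \in \C_0(X)$.

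The main obstacle I anticipate is the far-part continuity argument: one must ensure that as $x \to x_0$ the integrands $G(x,\cdot) f$ converge pointwise and are dominated on the relevant region, paying attention to the fact that the excluded ball $D(x,R)$ itself moves with $x$. The cleanest way around this is to fix a slightly smaller ball $D(x_0, R/2)$ as the common ``near'' region for all $x$ in a small neighborhood of $x_0$, so that the ``far'' integrand is controlled by a single $x$-independent dominating function $y \mapsto s\, C\, (\,d(x_0,y)/2\,)^{-(\a-\b)} 1_K(y)$, which is $\mu$-integrable by Proposition \ref{Gmu-ball}; continuity of $G$ off the diagonal then permits passing to the limit.
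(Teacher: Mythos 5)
Your argument is correct and follows essentially the same route as the paper: a near/far decomposition with Proposition \ref{Gmu-ball} controlling the near part uniformly, continuity of $G$ off the diagonal plus domination for the far part (including the correct fix for the moving ball), and the bound $G(x,y)\le C\,d(x,y)^{-(\a-\b)}$ for the decay at infinity. The only cosmetic difference is that the paper first reduces to $f_0=1_{D(x_0,R)}$ via lower semicontinuity of $Vf$ and $V(af_0-f)$, a step your direct treatment of general bounded $f$ renders unnecessary.
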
 

\begin{proof} Given such a function $f$,  there exist $x_0\in X$ and $R>0$
such that $f$ is bounded by a multiple $a f_0$ of  $f_0:=1_{D(x_0,R)}$.
Since both $Vf$ and $V(af_0-f)$ are lower semicontinuous, it hence suffices to show that
$Vf_0\in \C_0(X)$.

For all $N>0$ and $x\in X\setminus D(x_0, R+N))$, 
\begin{equation*} 
                                  G(f_0\mu)(x)\le C_\mu \int_{D(x_0,R)} d(x,y)^{-(\a-\b)}\,d\mu(y)\le C_\mu N^{-(\a-\b)} \mu(D(x_0,R)).
\end{equation*} 
So $Vf_0=G(f_0\mu)$ vanishes at infinity.

Next let $x\in X$ and $\ve>0$. By Proposition \ref{Gmu-ball}, there exists $r>0$ such that 
$V1_{D(z,2r)}(z)< \ve/3$ for every $z\in X$. Let
\begin{equation*} 
g:=1_{D(x_0,R)\setminus D(x,r)}.
\end{equation*} 
 By the continuity of $G$ outside the diagonal, the function $Vg$ is continuous and real on $D(x,r)$. 
So there exists $0<\delta<r $ such that $|Vg(z)-Vg(x)|<\ve/3$ for every $z\in D(x,\delta)$.
Finally, let us fix  $z\in D(x,\delta)$. Then $D(x,r)\subset D(z,2r)$, and hence
\begin{equation*} 
                         | Vf_0(z)-Vf_0(x)|<\frac \ve 3+|Vg(z)-Vg(x)| + \frac \ve 3<\ve.
\end{equation*} 
Thus $Vf$ is continuous at $x$. 
\end{proof} 

Let us choose a   sequence $(g_k)$ in $C_0(X)$ such that $\bigcup_{k=1}^\infty \{g_k>0\}=X$.
There exist $a_k>0$ such that $g_k+ Vg_k\le a_k$, $k\in\nat$. Then $g:=\sum_{k=1}^\infty 2^{-k}a_k\inv g_k
\in \C_0(X)$, $g>0$ and $u:=Vg\in E_{\mathbbm P}\cap\C_0(X)$, $u>0$. So the kernel $V$ is proper and 
we obtain the following; see, for example, \cite[Corollary 2.3.8,2]{H-course}. 

\begin{corollary}\label{balayage-space} 
$(X, E_{\mathbbm P})$ is a balayage space.
\end{corollary}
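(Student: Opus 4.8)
The final statement to prove is Corollary \ref{balayage-space}: that $(X, E_{\mathbbm P})$ is a balayage space. The plan is to recognize that the criterion for a cone of excessive functions of a sub-Markov semigroup to define a balayage space reduces to verifying that the potential kernel $V$ is \emph{proper}, meaning there exists a strictly positive continuous excessive function $u$ with a suitable finiteness/vanishing property. This is precisely the content of the general characterization cited as \cite[Corollary 2.3.8,2]{H-course}, so the work is entirely in constructing such a $u$.

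First I would collect the structural facts already established in the excerpt: by \cite[Lemma 8.2]{dense-para}, $\mathbbm P=(P_t)_{t>0}$ is a strong Feller sub-Markov semigroup on $X$ mapping $\C_0(X)$ into $C_0(X)$ with $\lim_{t\to 0}P_tf=f$ for $f\in\C_0(X)$; and by Proposition \ref{V-feller}, the potential kernel $V$ sends bounded compactly supported positive Borel functions into $\C_0(X)$. These two properties are exactly the regularity ingredients needed so that potentials $Vg$ of compactly supported data are continuous and vanish at infinity.

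The key step is the explicit construction of a strictly positive $u\in E_{\mathbbm P}\cap\C_0(X)$. I would choose a countable family $(g_k)$ in $C_0(X)$ whose positivity sets cover $X$, i.e.\ $\bigcup_{k=1}^\infty\{g_k>0\}=X$ (possible since $X$ has a countable base and each point has a small bump function supported near it). By Proposition \ref{V-feller}, each $Vg_k$ lies in $\C_0(X)$, so both $g_k$ and $Vg_k$ are bounded, and one may pick $a_k>0$ with $g_k+Vg_k\le a_k$. Then setting $g:=\sum_{k=1}^\infty 2^{-k}a_k^{-1}g_k$ gives a function in $\C_0(X)$ with $g>0$ everywhere (since the positivity sets cover $X$) and the normalization forces $u:=Vg$ to be finite and continuous, hence $u\in E_{\mathbbm P}\cap\C_0(X)$ with $u>0$. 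Finiteness of $u$ guarantees $V$ is a proper kernel, and the whole package then triggers the cited criterion to conclude that $(X,E_{\mathbbm P})$ is a balayage space.

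The main obstacle is really conceptual rather than computational: one must ensure the constructed $u=Vg$ is simultaneously strictly positive, real-valued (not identically $+\infty$), and excessive, and that these together meet the precise hypotheses of \cite[Corollary 2.3.8,2]{H-course}. The strict positivity of $g$ (from the covering property of the $g_k$) propagates to $u=Vg>0$ because $p_t$ is strictly positive; the finiteness is secured by the summability built into the coefficients $2^{-k}a_k^{-1}$; and excessiveness is automatic since $u$ is a potential $Vg=\int_0^\infty P_t g\,dt$. Once $u$ is in hand, invoking the cited corollary is immediate, so I expect the proof to be short, with all the genuine content already absorbed into Proposition \ref{V-feller} and the properness construction.
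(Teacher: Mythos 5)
Your proposal is correct and follows essentially the same route as the paper: the authors likewise pick $(g_k)$ with $\bigcup_k\{g_k>0\}=X$, normalize by $a_k$ with $g_k+Vg_k\le a_k$, set $g:=\sum_k 2^{-k}a_k^{-1}g_k$, obtain $u:=Vg\in E_{\mathbbm P}\cap\C_0(X)$ with $u>0$ so that $V$ is proper, and then invoke \cite[Corollary 2.3.8,2]{H-course}. The only content beyond this, as you note, is already contained in Proposition \ref{V-feller} and \cite[Lemma 8.2]{dense-para}.
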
 

Moreover, we know that there exists a Hunt process $\mathfrak X$ on $X$ with transition semigroup $\mathbbm P$;
see \cite[IV.8.1]{BH}. 

\begin{proposition}\label{G-harmonic}
For every $y\in X$, the function $G(\cdot,y)$ is a potential on $X$ with superharmonic support~$\{y\}$.
\end{proposition}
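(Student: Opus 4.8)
The goal is to prove that for each fixed $y\in X$, the function $G(\cdot,y)=\int_0^\infty p_t(\cdot,y)\,dt$ is a potential on the balayage space $(X,E_{\mathbbm P})$ whose superharmonic support is exactly the single point $\{y\}$. The plan is to identify $G(\cdot,y)$ as a pointwise increasing limit of genuine potentials $V f_\ve=G(f_\ve\mu)$ of the kernel $V$, where the densities $f_\ve$ approximate the point mass at $y$, and then analyze where the limit fails to be harmonic.

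First I would record that $G(\cdot,y)\in E_{\mathbbm P}$. Writing $G(x,y)=\int_0^\infty p_t(x,y)\,dt=Vf$ formally with $f=\ve_y$ is not directly legitimate since $\mu$ does not charge points, so instead I would set $f_\ve:=(\mu(D(y,\ve)))\inv 1_{D(y,\ve)}$, a probability density supported near $y$, and consider $u_\ve:=Vf_\ve=G(f_\ve\mu)$. By Proposition \ref{V-feller}, each $u_\ve$ lies in $\C_0(X)$, and being a potential $Vf_\ve$ of a positive function it is excessive, hence a continuous real potential. Using the continuity of $G$ off the diagonal (Proposition \ref{G-continuous}) together with the integrability furnished by the upper bound \eqref{G-estimate} and the volume estimate \eqref{volume}, one checks by Lemma \ref{Leb-prime} that $u_\ve(x)\to G(x,y)$ for every $x\ne y$ as $\ve\downarrow 0$; at $x=y$ both sides are $+\infty$. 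Thus $G(\cdot,y)$ is a pointwise limit of continuous potentials. To conclude that $G(\cdot,y)$ is itself a potential I would show the $u_\ve$ can be arranged to increase (e.g.\ pass to a sequence $\ve_k\downarrow 0$ and take $\hat R$ of suitable truncations, or invoke \cite[II.3.11]{BH}) so that $G(\cdot,y)$ is an increasing limit of potentials dominated by the superharmonic function $G(\cdot,y)$ itself; since $(X,E_{\mathbbm P})$ is a balayage space and $V$ is proper, an increasing limit of potentials that is finite on a dense set is again a potential (its greatest harmonic minorant is $0$). The finiteness off $\{y\}$ is exactly \eqref{G-estimate}.

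Next I would pin down the superharmonic support. The superharmonic support of a potential $p$ is the smallest closed set outside of which $p$ is harmonic. On $X\setminus\{y\}$ I claim $G(\cdot,y)$ is harmonic: there $G(\cdot,y)$ is finite and continuous, and it should satisfy the mean-value (invariance) property for the semigroup $\mathbbm P$. Concretely, for $x\ne y$ and a relatively compact open $U\ni x$ with $y\notin\ov U$, I would verify $\hat R_{G(\cdot,y)}^{U^c}=G(\cdot,y)$ on $U$, i.e.\ harmonicity, by using the balayage/strong-Markov structure of $(X,E_{\mathbbm P})$ and the fact that $G(\cdot,y)$ is an increasing limit of the potentials $u_\ve=Vf_\ve$ whose densities $f_\ve$ charge only a neighborhood of $y$ disjoint from $\ov U$. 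For such $\ve$ the potential $Vf_\ve$ is harmonic on $U$ (its Riesz measure $f_\ve\mu$ gives no mass to $U$), and harmonicity is preserved under increasing limits of harmonic functions that stay locally bounded, which holds on $U$ by \eqref{G-estimate}. Hence $G(\cdot,y)$ is harmonic on $X\setminus\{y\}$, so its superharmonic support is contained in $\{y\}$. That the support is not empty, i.e.\ equals $\{y\}$, follows because $G(\cdot,y)$ is a nonzero potential and therefore cannot be harmonic everywhere (a nonzero potential on a balayage space has nonempty superharmonic support); alternatively $\limsup_{x\to y}G(x,y)=\infty$ by \eqref{G-estimate} rules out harmonicity at $y$.

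The main obstacle I anticipate is the passage to the limit in the harmonicity statement: harmonicity on $U$ is an invariance property under balayage onto $U^c$, and while each approximating potential $Vf_\ve$ is harmonic on $U$, I must justify that the pointwise increasing limit $G(\cdot,y)$ inherits harmonicity rather than merely superharmonicity. The clean way is to combine local boundedness from the upper bound in \eqref{G-estimate} on the compact $\ov U\subset X\setminus\{y\}$ with the standard fact that on a balayage space an increasing limit of functions harmonic on $U$ is harmonic on $U$ provided the limit is locally bounded (equivalently finite) there; this is where properness of $V$ and the structure results for $(X,E_{\mathbbm P})$ (Corollary \ref{balayage-space}) are essential. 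The remaining steps—excessiveness, finiteness off the diagonal, and the limit identification—are routine given Propositions \ref{G-continuous} and \ref{V-feller} and Lemma \ref{Leb-prime}.
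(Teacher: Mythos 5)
Your strategy---approximate $G(\cdot,y)$ by the potentials $u_\ve:=Vf_\ve$ with $f_\ve$ a normalized indicator of $D(y,\ve)$, and pass to the limit---is genuinely different from the paper's, and it has two real gaps.

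First, the step ``the $u_\ve$ can be arranged to increase'' is not established (averages of $G(x,\cdot)$ over shrinking balls have no monotonicity in $\ve$, and neither of your suggested fixes produces it), and the conclusion you want to draw from it is false anyway: in a balayage space an increasing limit of potentials that is finite everywhere need \emph{not} be a potential (in $\real^3$ the equilibrium potentials $\hat R_1^{K_n}$ of an exhausting sequence of compacts increase to the harmonic function $1$). Dominating the limit ``by the superharmonic function $G(\cdot,y)$ itself'' is circular, since whether $G(\cdot,y)$ is a potential is exactly what is at stake. The paper needs none of this: excessiveness follows in one line from (CK) and Fubini, since $P_sG(\cdot,y)=\int_s^\infty p_t(\cdot,y)\,dt\uparrow G(\cdot,y)$ as $s\downarrow 0$, and the potential property follows from the upper bound in (\ref{G-estimate}) together with $1\in E_{\mathbbm P}$: for every $\ve>0$ the constant $\ve$ is excessive and majorizes $G(\cdot,y)$ outside a relatively compact ball $D(y,N)$, so the infimum defining the potential property is $0$.

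Second, the harmonicity on $X\setminus\{y\}$. Without monotonicity you must pass to the limit in $\int u_\ve\,d\mu_x^U=u_\ve(x)$, and the domination you invoke (local boundedness on $\ov U$ from (\ref{G-estimate})) controls the wrong region: the harmonic measure $\mu_x^U$ is carried by $X\setminus U$, which contains a neighborhood of $y$ where the family $(u_\ve)$ is unbounded, and you have no a priori control of the mass that $\mu_x^U$ places near $y$. This is precisely the difficulty the paper's argument is built to avoid. The paper argues by contradiction: if $\int G(\cdot,y)\,d\mu_x^U<G(x,y)$, then by continuity of $G$ off the diagonal the same strict inequality holds with $y$ replaced by every $z$ in some ball $D(y,r)$ disjoint from $U$; integrating in $z$ over $D(y,r)$ (of positive $\mu$-measure) and applying Fubini gives $\int v\,d\mu_x^U<v(x)$ for the single potential $v:=V1_{D(y,r)}$, contradicting $\int v\,d\mu_x^U=R_v^{X\setminus U}(x)\ge R_v^{D(y,r)}(x)=v(x)$ from \cite[II.7.1]{BH}. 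That replaces the problematic limit interchange by one application of Fubini plus monotonicity of reduced functions. To salvage your route you would need either a genuinely monotone approximation or an integrable $\mu_x^U$-majorant of $(u_\ve)$ near $y$; neither is supplied. (Your identification of the support as exactly $\{y\}$, via $\limsup_{x\to y}G(x,y)=\infty$, is fine.)
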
 

\begin{proof} Let $y\in X$.  It is straightforward to show that $G(\cdot,y)\in E_{\mathbbm P}$.
Indeed, it suffices to note that, by  Fubini's theorem and (CK), 
\begin{equation*}
  P_sG(\cdot,y)(x)=\int_0^\infty P_s p_t(\cdot,y)(x)\,dt=\int_0^\infty p_{s+t}(x,y)\,dt
=\int_s^\infty p_t(x,y)\,dt 
\end{equation*} 
for all $x\in X$ and $s>0$.
Moreover, clearly
\begin{equation*}
    \inf\{ v\in E_{\mathbbm P} \colon v\ge G(\cdot, y) \mbox{ outside a compact in $X$}\} =0
\end{equation*} 
that is, $G(\cdot,y)$ is a potential. 
To show that $G(\cdot,y)$ is harmonic on $X\setminus \{y\}$ let us fix $x\in X$, $x\ne y$,
and a relatively compact open neighborhood $U$ of $x$ such that $y\notin \ov U$.
Let $\mu_x^U$ be the harmonic measure for $U$ and $x$, that is, for every $v\in E_{\mathbbm P}$,
\begin{equation*}
                \int v\,d\mu_x^U= R_v^{X\setminus U}(x):=\inf\{w(x)\colon w\in E_{\mathbbm P},\, w\ge v\mbox{ on } X\setminus U\},
\end{equation*} 
and $\mu_x^U$ is supported by $X\setminus U$.
Clearly, $\int G(\cdot,y)\,d\mu_x^U\le G(x,y)$.

Suppose that  $\int G(\cdot,y)\,d\mu_x^U< G(x,y)$.
Then, by continuity of $G$ outside the diagonal, there exists $r>0$ such that $D(y,r)\cap U=\emptyset$
and  $\int G(\cdot,z)\,d\mu_x^U< G(x,z)$ for every $z\in D(y,r)$. Since $\mu(D(y,r))>0$,  
integration  with respect to~$\mu$ on~$D(y,r)$ and  Fubini's theorem yield that  the function $v:=V1_{D(y,r)}\in E_{\mathbbm P}$ satisfies 
\begin{equation}\label{contra}
                          \int  v\,d\mu_x^U < v (x).
\end{equation} 
However, by \cite[II.7.1]{BH}, 
\begin{equation*}
                            \int v\,d\mu_x^U=  R_v^{X\setminus U}(x)\ge   R_v^{ D(y,r)} (x)=v(x)
\end{equation*} 
contradicting (\ref{contra}).  Thus 
 $\int G(\cdot,y)\,d\mu_x^U= G(x,y)$ completing the proof.
\end{proof} 

We claim that no point is finely isolated. So let $y\in X$. Clearly,  $R_1^{\{y\}}\le a G(\cdot,y)$ for every $a>0$, 
 and hence $R_1^{\{y\}}=1_{\{y\}}$. Knowing already that $y$ is not isolated we obtain that
$\hat R_1^{\{y\}}(y)=\liminf_{z\to y} R_1^{\{y\}}(z)=0$.  
This implies that $\{y\}$ is not finely isolated, since $\hat R_1^{\{y\}}(y)$ is also the fine lower limit of 
$ R_1^{\{y\}}$ at $y$; see \cite[III.5.9]{BH}.

By  \cite[Theorem~1.1]{HN-representation}, we hence obtain that $G$ has   property~(iv)
from Section~\ref{habasp}. Proposition \ref{G-harmonic} and (\ref{G-estimate}) show that also    (i$'$), (ii) and (iii) hold.
By Theorem~\ref{a-b-Hausdorff} and  Proposition \ref{G-semipolar-semipolar},
we therefore conclude the following.

\begin{corollary}\label{final}
Every set $A$ in $X$ with $m_{\a-\b}(A)<\infty$, is contained in  
a~Borel set  which is semipolar, that is, which the process $\mathfrak X$ 
  hits  at most countably many times.
\end{corollary}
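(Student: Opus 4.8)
The plan is to assemble Corollary~\ref{final} purely from ingredients already established earlier in this subsection, so that the proof is a short bookkeeping argument rather than a new computation. The target statement has two halves: first, that $(X,E_{\mathbbm P})$ is a~balayage space for which $G$ is a legitimate Green function in the sense of Section~\ref{habasp}, i.e.\ $G$ satisfies (i$'$), (ii), (iii) and~(iv); and second, that the intrinsic measure $m_G$ governing $G$-semipolarity is comparable to the ordinary Hausdorff measure $m_{\a-\b}$, so that finiteness of $m_{\a-\b}(A)$ feeds into Theorem~\ref{a-b-Hausdorff} and Proposition~\ref{G-semipolar-semipolar}.

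First I would verify that $G$ is a genuine Green function for the balayage space. Corollary~\ref{balayage-space} already gives that $(X,E_{\mathbbm P})$ is a~balayage space, and the existence of the Hunt process $\mathfrak X$ is recorded immediately after it via \cite[IV.8.1]{BH}. Proposition~\ref{G-continuous} supplies (i) and (iii) together with the two-sided estimate~(\ref{G-estimate}), and Proposition~\ref{G-harmonic} upgrades this to~(i$'$) by showing $G(\cdot,y)$ is a potential with superharmonic support $\{y\}$; property~(ii), continuity off the diagonal, is the last assertion in Proposition~\ref{G-continuous}. For~(iv) I would invoke the no-finely-isolated-point argument sketched just before the corollary---using $R_1^{\{y\}}\le aG(\cdot,y)$ for all $a>0$ to force $\hat R_1^{\{y\}}(y)=0$---and then apply \cite[Theorem~1.1]{HN-representation}, exactly as the paper indicates, to obtain the integral representation $p=G\mu$ for continuous real potentials of compact superharmonic support.

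Next I would identify $m_G$ with $m_{\a-\b}$ up to constants, which is the true content that makes the Hausdorff condition usable. The estimate~(\ref{G-estimate}) says $G(x,y)\fastgleich d(x,y)^{-(\a-\b)}$, so the $G$-ball $B(x,\rho)=\{G(x,\cdot)>1/\rho\}$ is sandwiched between two metric balls $D(x,(c\rho)^{1/(\a-\b)})$ and $D(x,(C\rho)^{1/(\a-\b)})$. Applying Proposition~\ref{general-comparison} in both directions---with $F=B$, $\eta=1$ against $\tilde F=D$, $\tilde\eta=\a-\b$, and then the reverse---yields constants $\kappa_1,\kappa_2>0$ with $\kappa_1 m_G\le m_{\a-\b}\le \kappa_2 m_G$. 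Consequently $m_{\a-\b}(A)<\infty$ implies $m_G(A)<\infty$.

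Finally I would chain the pieces: $m_{\a-\b}(A)<\infty$ gives $m_G(A)<\infty$, Theorem~\ref{main-result} (equivalently Theorem~\ref{a-b-Hausdorff}.1) places $A$ inside a $G$-semipolar Borel set, and Proposition~\ref{G-semipolar-semipolar}---applicable precisely because we verified $G$ is a Green function satisfying~(iv)---translates $G$-semipolarity into semipolarity, which by the Hunt process interpretation means $\mathfrak X$ hits the set at most countably many times. I expect the only genuinely delicate step to be the verification of~(iv): it is not a direct estimate but rests on the representation theorem \cite[Theorem~1.1]{HN-representation} together with the fact that $X$ has no finely isolated points, and one must be careful that the hypotheses of that theorem (lower semicontinuity of $G(x,\cdot)$ at $x$, the fine-isolation condition) are actually met here. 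Everything else is a routine assembly of results proved earlier in the section.
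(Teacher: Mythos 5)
Your proposal is correct and follows essentially the same route as the paper: verify that $G$ satisfies (i$'$)--(iv) via Propositions~\ref{G-continuous} and~\ref{G-harmonic}, the no-finely-isolated-points argument and \cite[Theorem~1.1]{HN-representation}, use the two-sided bound (\ref{G-estimate}) together with Proposition~\ref{general-comparison} to compare $m_G$ with $m_{\a-\b}$, and then chain Theorem~\ref{main-result} (i.e.\ Theorem~\ref{a-b-Hausdorff}.1) with Proposition~\ref{G-semipolar-semipolar}. The only cosmetic difference is that you make the sandwiching of $G$-balls between metric balls and the resulting two-sided comparability of $m_G$ and $m_{\a-\b}$ explicit, whereas the paper leaves this implicit in its derivation of Theorem~\ref{a-b-Hausdorff}; only the direction $m_G\le c^{-1}m_{\a-\b}$ is actually needed.
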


{\small \noindent 
Wolfhard Hansen,
Fakult\"at f\"ur Mathematik,
Universit\"at Bielefeld,
33501 Bielefeld, Germany, e-mail:
 hansen$@$math.uni-bielefeld.de}\\
{\small \noindent Ivan Netuka,
Charles University,
Faculty of Mathematics and Physics,
Mathematical Institute,
 Sokolovsk\'a 83,
 186 75 Praha 8, Czech Republic, email:
netuka@karlin.mff.cuni.cz}

\end{document}